	\theoremstyle{plain}
	\newtheorem{thm}{Theorem}[section]
	\newtheorem{lem}[thm]{Lemma}
	\newtheorem{cor}[thm]{Corollary}
	\newtheorem{prop}[thm]{Proposition}
	\theoremstyle{definition}
	\newtheorem{dfn}[thm]{Definition}
	\theoremstyle{remark}
	\newtheorem{rem}[thm]{Remark}
	\newtheorem{expl}[thm]{Example}
	\setlist[enumerate,1]{leftmargin=0pt,itemindent=17pt,label=\textup{(\arabic*)}}
	\setlist[itemize]{leftmargin=*}
\begin{document}
\title{On a generalized Fraïssé limit construction}
\author[S.~Masumoto]{Shuehi MASUMOTO}
\address[S.~Masumoto]{Graduate~School~of~Mathematical~Sciences, the~University~of~Tokyo}
\email{masumoto@ms.u-tokyo.ac.jp}
\keywords{Fraïssé theory; Metric structures}

\begin{abstract}
In this paper, we present a slightly modified version of 
Fraïssé theory which is used in~\cite{eagle16:_fraisse_limits} 
and~\cite{masumoto16:_jiang_su}.  
Using this version, we also show that 
every UHF algebra can be recognized as 
a Fraïssé limit of a class of 
C*-algebras of matrix-valued continuous functions on cubes with distinguished traces.  
\end{abstract}

\maketitle


\section{Introduction}

Fraïssé theory was originally invented by Rolland Fraïssé 
in~\cite{fraisse54:_extension_relations}.  
The fundamental theorem of this theory claims that 
there is a bijective correspondence between 
the ultra-homogeneous structures and what we call Fraïssé classes.  
For an ultra-homogeneous structure, the corresponding Fraïssé class is 
its age, that is, the class of all finitely generated substructures; 
and the ultra-homogeneous structure is recovered as 
a generic inductive limit of members of the Fraïssé class, 
so that it is called the Fraïssé limit of the class.  

This theory has been generalized to the setting of 
metric structures (\cite{schoretsanitis07:_fraisse_theory}, 
\cite{yaacov15:_fraisse_limits}).  
In~\cite{yaacov15:_fraisse_limits}, 
a Fraïssé class of metric structures was defined as 
a class of finitely generated metric structures which satisfies 
the axioms called the hereditary property (HP), 
the joint embedding property (JEP), 
the near amalgamation property (NAP), 
the Polish property (PP), 
and the continuity property (CP).  
Then it was shown that 
there is a bijective correspondence between 
the approximately ultra-homogeneous structures 
and the Fraïssé classes as above.  

The key idea of the proof of the fundamental theorem in~\cite{yaacov15:_fraisse_limits}  
was to use approximate isomorphisms.  
If $\mathscr{K}$ is a Fraïssé class, 
then a structure $\mathcal{M}$ is called a $\mathscr{K}$-structure 
if its age is included in $\mathscr{K}$; 
and an approximate $\mathscr{K}$-isomorphism from a $\mathscr{K}$-structure 
$\mathcal{M}$ into another $\mathscr{K}$-structure $\mathcal{N}$ 
is defined as a bi-Kat\v{e}tov map from 
$|\mathcal{M}| \times |\mathcal{N}|$ to $[0, \infty]$ 
which approximately dominates a map of the form 
$(a, b) \mapsto d(\iota(a), \eta(b))$, 
where $\iota$ and $\eta$ are finite partial embeddings of 
$\mathcal{M}$ and $\mathcal{N}$ into some $\mathcal{C} \in \mathscr{K}$, respectively.  
Intuitively, an approximate $\mathscr{K}$-isomorphism 
from $\mathcal{M}$ into $\mathcal{N}$ should be 
thought of as a condition to be imposed on 
an embedding of a substructure of $\mathcal{M}$ into an extension of 
a substructure of $\mathcal{N}$.  
What is important is that we can consider compositions of 
approximate isomorphisms.  
Namely, if $\varphi$ is an approximate isomorphism from $\mathcal{M}_1$ 
into $\mathcal{M}_2$ and $\psi$ is an approximate isomorphism from 
$\mathcal{M}_2$ into $\mathcal{M}_3$, 
then one can consider a new approximate isomorphism 
$\psi\varphi$ from $\mathcal{M}_1$ into $\mathcal{M}_3$.  
Thanks to this fact, 
we can prove that the limits of two generic inductive systems of
members of a Fraïssé class $\mathscr{K}$ are isomorphic to each other and are 
ultra-homogeneous, 
by carrying over a back-and-forth argument between them via approximate isomorphisms.  

In~\cite{eagle16:_fraisse_limits}, 
a more relaxed version of Fraïssé theory was considered 
in order to recognize several well-known examples of operator algebras 
as Fraïssé limits.  
In their definition of Fraïssé classes, 
the axioms PP and CP were replaced by weaker conditions 
called the weak Polish property (WPP) and the Cauchy continuity property (CCP), 
and the axiom HP was omitted.  
Corresponding to this change, 
the definition of $\mathscr{K}$-structures for a Fraïssé class $\mathscr{K}$ 
was also modified: 
a structure $\mathcal{M}$ is said to be a $\mathscr{K}$-structure 
if it is an inductive limit of members of $\mathscr{K}$.    
Then it was claimed that every Fraïssé class has its limit, 
that is, for every Fraïssé class $\mathscr{K}$, 
there exists a unique $\mathscr{K}$-structure $\mathcal{M}$ which is 
approximately $\mathscr{K}$-ultra-homogeneous 
in the sense that if $\iota_1, \iota_2$ are two embeddings of 
a member $\mathcal{A}$ of $\mathscr{K}$, 
then for any finite subset $F \subseteq |\mathcal{A}|$ and any $\varepsilon > 0$ 
there exists an automorphism $\alpha$ of $\mathcal{M}$ 
with $d(\alpha \circ \iota_1(a), \iota_2(a)) < \varepsilon$ for all $a \in F$.  
The proof of this claim was not presented in~\cite{eagle16:_fraisse_limits}, 
because it was thought that 
the proof of~\cite{yaacov15:_fraisse_limits} would still work almost verbatim 
in this setting.  

In order to adopt the proof of~\cite{yaacov15:_fraisse_limits}, 
we first have to guess what is the appropriate definition of 
approximate isomorphisms in this relaxed setting.  
One of the candidates would be the same definition as the original one 
in~\cite{yaacov15:_fraisse_limits}, 
but this does not seem to work, because within this setting 
one can no longer prove in the same way as~\cite{yaacov15:_fraisse_limits} 
that compositions of approximate isomorphisms between $\mathscr{K}$-structures 
are approximate isomorphisms (see Remark~\ref{rem:_the_gap}).  
This is a fatal gap, 
because this property is used to establish the existence and uniqueness of 
a limit of a Fraïssé class.  

In this paper, we reconstruct the theory presented 
in~\cite{eagle16:_fraisse_limits} and reveal the correct form of 
the fundamental theorem.  
Because of the gap explained in the previous paragraph, 
it turns out that the concept of $\mathscr{K}$-structures should have been 
more complicated, and the homogeneity property the generic limit satisfies 
is weaker than the original claim.  

Simultaneously, we slightly generalize the theory so that we can deal with categories.  
The motivation of this generalization is the following.  
In~\cite{eagle16:_fraisse_limits}, the Jiang--Su algebra $\mathcal{Z}$ was 
recognized as a Fraïssé limit of the class of prime dimension drop algebras 
with distinguished faithful traces.  An alternative proof of this fact 
was given in~\cite{masumoto16:_jiang_su}, which was based on 
the fact that every unital embedding between dimension drop algebras is 
approximately diagonalizable.  
These results led to the prospect of giving a short proof of the fact
that the Jiang--Su algebra is tensorially self-absorbing 
(i.e.~$\mathcal{Z} \otimes \mathcal{Z} \simeq \mathcal{Z}$), 
and the first step of such a short proof was expected to be showing that 
the class of tensor products of dimension drop algebras with distinguished faithful 
traces is also a Fraïssé class.  
For this, if we adopt the same strategy as~\cite{masumoto16:_jiang_su}, 
then we should first show the statement that every unital embedding between 
tensor products of dimension drop algebras is approximately diagonalizable, 
which turns out to be false (cf.~Remark~\ref{rem:_counterexample}).  
A natural solution to this problem would be simply restricting embeddings to 
approximately diagonalizable ones, so that the object under consideration 
would not be a class but a category.  

We should note that there is another research on 
Fraïssé theory for categories by Wies\l aw Kubi\'{s}~\cite{kubis13:_metric-enriched}.  
His theory is developed within the theory of categories enriched over metric spaces, 
while our approach is based on the work by Itaï Ben Yaacov~\cite{yaacov15:_fraisse_limits}.  

This paper is organized as follows.  
In the second section, we recall the definition and properties of approximate isometries.  
The third section is devoted to metric structures and approximate isomorphisms.  
The existence and uniqueness of the Fraïssé limit is shown in the fourth section.  
The fifth section contains an application of this theory to UHF algebras.


\section{Approximate isometries}
In this section, we recall the definition and properties of approximate isometries.  
Our handling of them is based on~\cite{yaacov15:_fraisse_limits}.  
Proofs are reproduced for the convenience of the reader.  

Let $X$ and $Y$ be metric spaces.  
We denote by $\operatorname{JE}(X, Y)$ the set of all pairs 
$(\iota, \eta)$, where $\iota \colon X \to Z$ and $\eta \colon Y \to Z$ 
are isometries into some metric space $Z$.  
Each element of $\operatorname{JE}(X, Y)$ is called a \emph{joint embedding} of $X$ and $Y$.  

\begin{dfn}\label{dfn:_approximate_isometries}
\begin{enumerate}
\item
	Let $X$ be a metric space.  
	A map $\varphi \colon X \to [0, \infty]$ is said to be \emph{Kat\v{e}tov} 
	if it satisfies the inequalities 
	\[
	\begin{aligned}
		\varphi(x)  &\leq d_X(x, x') + \varphi(x'), & d_X(x, x') \leq \varphi(x) + \varphi(x')
	\end{aligned}
	\]
	for all $x, x' \in X$.  
\item
	Suppose that $X$ and $Y$ are metric spaces.  
	An \emph{approximate isometry} from $X$ to $Y$ is 
	a map $\varphi \colon X \times Y \to [0, \infty]$ which is separately Kat\v{e}tov.  
	The class of all approximate isometries from $X$ to $Y$ is denoted by 
	$\operatorname{Apx}(X, Y)$.  
	Note that, being a closed subset of $[0, \infty]^{X \times Y}$, 
	the space $\operatorname{Apx}(X, Y)$ is compact and Hausdorff 
	with respect to the topology of pointwise convergence.  
\end{enumerate}
\end{dfn}

Intuitively, an approximate isometry is a condition to be imposed on joint embeddings.  
A joint embedding $(\iota, \eta) \in \operatorname{JE}(X, Y)$ is said to 
\emph{satisfy} an approximate isometry $\varphi$ from $X$ to $Y$ 
if the inequality 
\[
	d\bigl(\iota(x), \eta(y)\bigr) \leq \varphi(x, y)
\]
holds for all $x \in X$ and $y \in Y$.  
We shall denote by $\operatorname{JE}^{\leq\varphi}(X, Y)$ 
the class of all joint embeddings satisfying $\varphi$.  
Clearly, the condition $\varphi \equiv \infty$ is the weakest condition.  
Note that if an approximate isometry $\varphi$ from $X$ to $Y$ takes a finite value at 
some point, then it is real-valued, because if $\varphi(x_0, y_0) < \infty$, then
\[
	\varphi(x, y) \leq d_X(x, x_0) + \varphi(x_0, y_0) + d(y_0, y) < \infty.  
\]

\begin{expl}
\begin{enumerate}
\item
	For a joint embedding $(\iota, \eta)$ of $X$ and $Y$, 
	the map 
	\[
		(x, y) \mapsto d\bigl(\iota(x), \eta(y)\bigr)
	\] 
	itself is an approximate isometry.  
	We shall denote this approximate isometry by $\varphi_{\iota, \eta}$.  
	If $\eta$ is equal to $\operatorname{id}_Y$, then $\varphi_{\iota, \eta}$ is 
	simply written as $\varphi_\iota$.  
	
	We shall show that every approximate isometry is of this form 
	unless it is equal to $\infty$.  
	To see this, let $\varphi \colon X \times Y \to [0, \infty)$ be an approximate isometry 
	and define a symmetric function $\delta \colon (X \coprod Y)^2 \to [0, \infty)$ by 
	\[
		\delta(z, z') = \left\{
		\begin{array}{ll}
			d_X(z, z') & \text{if $z, z' \in X$}, \\
			\varphi(z, z') & \text{if $z \in X$ and $z' \in Y$}, \\
			d_Y(z, z') & \text{if $z, z' \in Y$}.  
		\end{array}
		\right.
	\]
	Then it is easy to see that $\delta$ is a pseudo-metric.  
	If $\iota$ and $\eta$ are canonical embeddings of $X$ and $Y$ into 
	the quotient metric space $X \coprod_\varphi Y$, 
	then $d\bigl(\iota(x), \eta(y)\bigr) = \varphi(x, y)$, as desired.  
	It follows that, for any approximate isometries $\varphi$ and $\psi$ from $X$ to $Y$, 
	the inequality $\varphi \leq \psi$ holds if and only if 
	$\operatorname{JE}^{\leq\varphi}(X, Y)$ is included in $\operatorname{JE}^{\leq\psi}(X, Y)$, 
	so the order $\leq$ completely reflects the strength of conditions.  

	Note that a net $\{\iota_\alpha\}$ of isometries from $X$ into $Y$ converges pointwise to 
	an isometry $\iota$ if and only if $\{\varphi_{\iota_\alpha}\}$ converges to $\varphi_\iota$.  
	Indeed, if $\{\iota_\alpha\}$ converges to $\iota$, then 
	\[
		\varphi_{\iota_\alpha}(x, y) = d\bigl(\iota_\alpha(x), y\bigr) 
		\to d\bigl(\iota(x), y\bigr) = \varphi_\iota(x, y)
	\]
	for all $x \in X$ and $y \in Y$.  
	Conversely, if $\{\varphi_{\iota_\alpha}\}$ converges to $\varphi_\iota$, 
	then for any $x \in X$ we have 
	\[
		d\bigl(\iota_\alpha(x), \iota(x)\bigr) = \varphi_{\iota_\alpha}\bigl(x, \iota(x)\bigr) 
		\to \varphi_\iota\bigl(x, \iota(x)\bigr) = d\bigl(\iota(x), \iota(x)\bigr) = 0.  
	\]
\item
	If $\varphi$ is an approximate isometry from $X$ to $Y$, then 
	\[
		\varphi^*(y, x) := \varphi(x, y)
	\]
	defines an approximate isometry $\varphi^*$ from $Y$ to $X$.  
\item
	Given $\varphi \in \operatorname{Apx}(X, Y)$ and $\psi \in \operatorname{Apx}(Y, Z)$, 
	we define their \emph{composition} by 
	\[
		\psi\varphi(x, z) := \inf_{y \in Y}\bigl(\varphi(x, y) + \psi(y, z)\bigr).  
	\]
	Here, we shall check that $\psi\varphi$ is an approximate isometry from $X$ to $Z$.  
	Indeed, if $x$ and $x'$ are points of $X$, then 
	\begin{align*}
		\psi\varphi(x, z) 
		&= \inf_{y \in Y}\bigl(\varphi(x, y) + \psi(y, z)\bigr) \\
		&\leq \inf_{y \in Y}\bigl(d(x, x') + \varphi(x', y) + \psi(y, z)\bigr) \\
		&= d(x, x') + \psi\varphi(x' z)
	\shortintertext{and}
		d(x, x') 
		&\leq \inf_{\mathclap{y, y' \in Y}}
		\kern 4pt \bigl(\varphi(x, y) + d(y, y') + \varphi(x', y')\bigr) \\
		&\leq \inf_{\mathclap{y, y' \in Y}}
		\kern 4pt \bigl(\varphi(x, y) + \psi(y, z) + \varphi(x', y') + \psi(y', z)\bigr) \\
		&= \psi\varphi(x, z) + \psi\varphi(x' z), 
	\end{align*}
	so $\psi\varphi({}\cdot{}, z)$ is Kat\v{e}tov for all $z \in Z$.  
	By symmetry $\psi\varphi(x, {}\cdot{})$ is also Kat\v{e}tov for all $x \in X$, 
	so $\psi\varphi$ is an approximate isometry.  
	
	It is worth noting that if $(\iota_1, \iota_2) \in \operatorname{JE}^{\leq\varphi}(X, Y)$ 
	and $(\iota_2, \iota_3) \in \operatorname{JE}^{\leq\psi}(Y, Z)$, 
	then $(\iota_1, \iota_3) \in \operatorname{JE}^{\leq\psi\varphi}(X, Z)$, 
	and $\psi\varphi$ is the smallest approximate isometry satisfying this property.  
	Also, it can be easily seen that the equality 
	$\varphi_{\iota, \eta}^{} = \varphi_\eta^*\varphi_\iota^{}$ holds 
	for any joint embedding $(\iota, \eta)$.  
\item
	Let $X' \subseteq X$ and $Y' \subseteq Y$ be subspaces.  
	If $\varphi$ is an approximate isometry from $X$ to $Y$, 
	then its restriction $\varphi|_{X' \times Y'}$ is 
	an approximate isometry from $X'$ to $Y'$.  
	Note that, if $\iota \colon X' \to X$ and $\eta \colon Y' \to Y$ are
	the canonical embeddings, then $\varphi|_{X' \times Y'}$ is equal to 
	$\varphi_\eta^*\varphi\varphi_\iota^{}$.  
	Now suppose that $\psi$ is an approximate isometry from $X'$ to $Y'$.  
	The \emph{trivial extension} of $\psi$ to $X \times Y$ is defined by 
	$\psi|^{X \times Y} := \varphi_\eta^{}\psi\varphi_\iota^*$.  
	It is easy to show that $\psi|^{X \times Y}$ is the largest 
	approximate isometry such that the restriction to $X' \times Y'$ is equal to $\psi$.  
	More generally, an approximate isometry $\theta$ from $X$ to $Y$ satisfies 
	$\theta \leq \psi|^{X \times Y}$ if and only if $\theta|_{X' \times Y'} \leq \psi$.  
\item
	If $\varphi$ is an approximate isometry from $X$ to $Y$ and $\varepsilon$ is 
	a non-negative real number, then the \emph{relaxation} of $\varphi$ by $\varepsilon$ 
	is defined by $(x, y) \mapsto \varphi(x, y) + \varepsilon$.  
	We simply denote this approximate isometry by $\varphi + \varepsilon$.  
	Note that the operation of taking relaxations commutes with compositions.  
\end{enumerate}
\end{expl}

\begin{dfn}\label{dfn:_epsilon-totality}
An approximate isometry $\varphi$ from $X$ to $Y$ is said to be 
\begin{itemize}
\item
	\emph{$\varepsilon$-total} if 
	$\varphi^*\varphi \leq \varphi_{\operatorname{id}_X} + 2\varepsilon$.  
\item
	\emph{$\varepsilon$-surjective} if $\varphi^*$ is $\varepsilon$-total.  
\item
	\emph{$\varepsilon$-bijective} if $\varphi$ is $\varepsilon$-total and 
	$\varepsilon$-surjective.  
\end{itemize}
\end{dfn}

If $\varphi$ and $\psi$ are approximate isometries from $X$ to $Y$ 
with $\psi \leq \varphi$, 
then clearly $\psi^*\psi \leq \varphi^*\varphi$.  
Therefore, if $\varphi$ is $\varepsilon$-total, then so is $\psi$.  
Similarly, if $\varphi$ is $\varepsilon$-surjective, then so is $\psi$.  

\begin{prop}\label{prop:_characterization_of_epsilon-totality}
An approximate isometry $\varphi$ from $X$ to $Y$ is $\varepsilon$-total 
if and only if any $(\iota, \eta) \in \operatorname{JE}^{\leq\varphi}(X, Y)$ 
satisfies $d\bigl(\iota(x), \eta[Y]\bigr) \leq \varepsilon$ for each $x \in X$.  
In particular, if $Y$ is complete and $\varphi$ is $\varepsilon$-total for any $\varepsilon$, 
then it is of the form $\varphi_\iota$ for a unique isometry $\iota \colon X \to Y$.  
\end{prop}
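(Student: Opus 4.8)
The plan is to route both equivalences through the single scalar condition
\begin{equation}
	\inf_{y \in Y}\varphi(x, y) \leq \varepsilon \qquad \text{for every } x \in X. \tag{$\ast$}
\end{equation}
First I would check that $\varphi$ is $\varepsilon$-total if and only if $(\ast)$ holds. For the forward direction, evaluate the defining inequality $\varphi^*\varphi \leq \varphi_{\operatorname{id}_X} + 2\varepsilon$ on the diagonal: since $\varphi^*\varphi(x, x) = \inf_{y}\bigl(\varphi(x, y) + \varphi(x, y)\bigr) = 2\inf_y \varphi(x, y)$ while $\varphi_{\operatorname{id}_X}(x, x) = 0$, this is exactly $(\ast)$. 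For the converse, fix $x, x' \in X$ and $\delta > 0$, choose $y$ with $\varphi(x, y) < \varepsilon + \delta$, and apply the Kat\v{e}tov inequality $\varphi(x', y) \leq d(x, x') + \varphi(x, y)$ in the first variable to get $\varphi(x, y) + \varphi(x', y) < d(x, x') + 2\varepsilon + 2\delta$; letting $\delta \to 0$ yields $\varphi^*\varphi(x, x') \leq d(x, x') + 2\varepsilon$.

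Next I would show that $(\ast)$ is also equivalent to the joint-embedding condition. One direction is immediate: if $(\ast)$ holds and $(\iota, \eta) \in \operatorname{JE}^{\leq\varphi}(X, Y)$, then for each $x$ we have $d\bigl(\iota(x), \eta[Y]\bigr) = \inf_y d\bigl(\iota(x), \eta(y)\bigr) \leq \inf_y \varphi(x, y) \leq \varepsilon$. For the other direction, assume the joint-embedding condition and that $\varphi$ is real-valued; apply it to the canonical embeddings $\iota, \eta$ of $X$ and $Y$ into the quotient space $X \coprod_\varphi Y$, which satisfy $d\bigl(\iota(x), \eta(y)\bigr) = \varphi(x, y)$ identically, so that $\inf_y \varphi(x, y) = d\bigl(\iota(x), \eta[Y]\bigr) \leq \varepsilon$. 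The only point needing separate treatment is the degenerate case $\varphi \equiv \infty$ with $X, Y$ both nonempty: there one easily builds joint embeddings in which $\iota(X)$ and $\eta(Y)$ are arbitrarily far apart, so the joint-embedding condition fails, while $\varphi$ is plainly not $\varepsilon$-total, so the equivalence still holds (both sides fail); the cases $X = \emptyset$ or $Y = \emptyset$ are trivial.

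For the final assertion, suppose $Y$ is complete and $\varphi$ is $\varepsilon$-total for every $\varepsilon > 0$; by the first part, $\inf_y \varphi(x, y) = 0$ for every $x$. Fixing $x$, pick $y_n \in Y$ with $\varphi(x, y_n) \to 0$. The Kat\v{e}tov inequality $d(y_n, y_m) \leq \varphi(x, y_n) + \varphi(x, y_m)$ shows $(y_n)$ is Cauchy, hence convergent by completeness; call its limit $\iota(x)$, and note the same inequality makes this limit independent of the chosen sequence. Passing to the limit in $\varphi(x, y') \leq d(y', y_n) + \varphi(x, y_n)$ and in $d(y_n, y') \leq \varphi(x, y_n) + \varphi(x, y')$ gives $\varphi(x, y') = d\bigl(\iota(x), y'\bigr)$ for all $y' \in Y$, i.e.\ $\varphi = \varphi_\iota$. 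That $\iota$ is an isometry then follows from the two Kat\v{e}tov inequalities for $\varphi$ applied with the free $Y$-variable set equal to $\iota(x')$: $d\bigl(\iota(x), \iota(x')\bigr) = \varphi\bigl(x, \iota(x')\bigr) \leq d(x, x') + \varphi\bigl(x', \iota(x')\bigr) = d(x, x')$ and $d(x, x') \leq \varphi\bigl(x, \iota(x')\bigr) + \varphi\bigl(x', \iota(x')\bigr) = d\bigl(\iota(x), \iota(x')\bigr)$. Uniqueness is clear: if $\varphi = \varphi_{\iota'}$ as well, evaluating at $y' = \iota(x)$ forces $d\bigl(\iota'(x), \iota(x)\bigr) = 0$.

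I do not expect any genuine obstacle here: the whole argument is a bookkeeping exercise with the two Kat\v{e}tov inequalities together with completeness of $Y$. The only things to watch are that $d\bigl(\iota(x), \eta[Y]\bigr)$ is an infimum that need not be attained — so the reasoning must stay at the level of infima and approximating sequences rather than exact minimizers — and the isolated case $\varphi \equiv \infty$, the one place where the two conditions are not literally parallel and which must be disposed of before invoking the canonical embedding $X \coprod_\varphi Y$.
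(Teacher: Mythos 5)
Your proof is correct and follows essentially the same route as the paper: both directions come down to the identity $\varphi^*\varphi(x,x) = 2\inf_{y}\varphi(x,y)$ together with the canonical embeddings into $X \coprod_\varphi Y$ for the converse, exactly as in the paper's argument. The only additions are that you spell out the degenerate case $\varphi \equiv \infty$ and give an explicit Cauchy-sequence argument for the final ``in particular'' clause, both of which the paper leaves implicit.
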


\begin{proof}
Suppose that $\varphi$ is $\varepsilon$-total and let $(\iota, \eta)$ be 
in $\operatorname{JE}^{\leq\varphi}(X, Y)$.  
Then, for any $x \in X$, we have 
\[
\begin{aligned}
	2\inf_{y \in Y} d\bigl(\iota(x), \eta(y)\bigr) 
	&\leq \inf_{y \in Y}\bigl(\varphi(x, y) + \varphi^*(y, x)\bigr) 
	= \varphi^*\varphi(x, x) \\
	&\leq \varphi_{\operatorname{id}}(x, x) + 2\varepsilon = 2\varepsilon, 
\end{aligned}
\]
so $d\bigl(\iota(x), \eta[Y]\bigr) \leq \varepsilon$.  

Conversely, suppose that $d\bigl(\iota(x), \eta[Y]) \leq \varepsilon$ holds for any 
$(\iota, \eta) \in \operatorname{JE}^{\leq\varphi}(X, Y)$ and any $x \in X$.  
Then $\varphi \not\equiv \infty$, so it is of the form 
$\varphi_{\iota, \eta}$, and 
\[
\begin{aligned}
	\varphi^*\varphi(x, x') 
	&= \inf_{y \in Y} \bigl(d\bigl(\iota(x), \eta(y)\bigr) 
	+ d\bigl(\eta(y), \iota(x')\bigr)\bigr) \\
	&\leq d(x, x') + 2\inf_{y \in Y}d\bigl(\iota(x), \eta(y)\bigr) \\
	&\leq \varphi_{\operatorname{id}}(x, x') + 2 \varepsilon.  
\end{aligned}
\]
\end{proof}

Let $\varphi$ be an approximate isometry from $X$ to $Y$.  
We set 
\[
	\operatorname{Apx}^{\leq\varphi}(X, Y) 
	:= \{\psi \in \operatorname{Apx}(X, Y) \mid \psi \leq \varphi\}.  
\]
We also denote by $\operatorname{Apx}^{\vartriangleleft\varphi}(X, Y)$ 
the interior of the closed subset $\operatorname{Apx}^{\leq\varphi}(X, Y)$ 
of the compact Hausdorff space $\operatorname{Apx}(X, Y)$, 
and write $\psi \vartriangleleft \varphi$ or $\varphi \vartriangleright \psi$ 
if $\psi$ belongs to $\operatorname{Apx}^{\vartriangleleft\varphi}(X, Y)$.  
If $\operatorname{Apx}^{\vartriangleleft\varphi}(X, Y)$ is nonempty, 
then $\varphi$ is said to be \emph{strict}.  
The class of all strict approximate isometries is denoted by $\operatorname{Stx}(X, Y)$.  

It can be easily verified that the relation $\vartriangleleft$ is preserved under 
restrictions and trivial extensions.  In particular, 
restrictions and trivial extensions of a strict approximate isometries are strict.  

\begin{prop}\label{prop:_characterization_of_strict_domination}
For $\varphi, \psi \in \operatorname{Apx}(X, Y)$, the following are equivalent.  
\begin{enumerate}[label=\textup{(\roman*)}]
\item
	The relation $\psi \vartriangleleft \varphi$ holds.  
\item
	There exist finite subsets $X_0 \subseteq X$ and $Y_0 \subseteq Y$ 
	and a positive real number $\varepsilon$ such that the inequality 
	$\varphi \geq (\psi|_{X_0 \times Y_0})|^{X \times Y} + \varepsilon$ holds.  
\item
	Same as \textup{(ii)}, with $\geq$ replaced by $\vartriangleright$.  
\end{enumerate}
Moreover, if these conditions are satisfied, then there exist 
then there exist finite subsets $X_0 \subseteq X$ and $Y_0 \subseteq Y$ and 
a rational-valued approximate isometry $\rho \in \operatorname{Apx}(X_0, Y_0)$ such that 
the relation $\psi \vartriangleleft \rho|^{X \times Y} \vartriangleleft \phi$ holds.  
\end{prop}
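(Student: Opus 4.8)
The plan is to prove $\textup{(iii)} \Rightarrow \textup{(ii)} \Rightarrow \textup{(i)} \Rightarrow \textup{(ii)}$ and $\textup{(ii)} \Rightarrow \textup{(iii)}$ --- which together give the equivalence --- and then the concluding ``moreover'' clause. Throughout I use freely that relaxation commutes with compositions, hence with restrictions and trivial extensions; that $(\sigma|^{X \times Y})|_{X_0 \times Y_0} = \sigma$; and that the trivial extension is monotone, which follows at once from the characterization $\theta \leq \sigma|^{X \times Y} \iff \theta|_{X_0 \times Y_0} \leq \sigma$ noted above. The degenerate case $\psi \equiv \infty$ forces $\varphi \equiv \infty$ in each of the three conditions, and then $\operatorname{Apx}^{\leq\varphi}(X, Y)$ is the whole space and hence its own interior, so everything holds trivially; I therefore assume $\psi$ real-valued whenever convenient.

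$\textup{(iii)} \Rightarrow \textup{(ii)}$ is immediate, as $\vartriangleright$ implies $\geq$. For $\textup{(ii)} \Rightarrow \textup{(i)}$: the set $N := \{\theta \in \operatorname{Apx}(X, Y) : \theta(x, y) < \psi(x, y) + \varepsilon \text{ for all } (x, y) \in X_0 \times Y_0\}$ is open --- a finite intersection of preimages of open rays under the coordinate evaluations --- and contains $\psi$; any $\theta \in N$ has $\theta|_{X_0 \times Y_0} \leq \psi|_{X_0 \times Y_0} + \varepsilon$, hence $\theta \leq (\psi|_{X_0 \times Y_0} + \varepsilon)|^{X \times Y} = (\psi|_{X_0 \times Y_0})|^{X \times Y} + \varepsilon \leq \varphi$, so $N \subseteq \operatorname{Apx}^{\leq\varphi}(X, Y)$ and $\psi \vartriangleleft \varphi$. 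For $\textup{(i)} \Rightarrow \textup{(ii)}$: since $\psi$ lies in the interior of $\operatorname{Apx}^{\leq\varphi}(X, Y)$, there is a basic open neighbourhood $N = \{\theta : \theta(p_i) \in V_i,\ i = 1, \dots, k\}$ of $\psi$ contained in it, with $p_i \in X \times Y$ and $V_i \ni \psi(p_i)$ open; let $X_0$ and $Y_0$ be the finite sets of first and second coordinates of the $p_i$. Then $(\psi|_{X_0 \times Y_0})|^{X \times Y}$ agrees with $\psi$ at every $p_i$, so if $\varepsilon > 0$ is small enough that $\psi(p_i) + \varepsilon \in V_i$ for all $i$, the relaxation $(\psi|_{X_0 \times Y_0})|^{X \times Y} + \varepsilon$ belongs to $N \subseteq \operatorname{Apx}^{\leq\varphi}(X, Y)$, i.e.\ it is $\leq \varphi$; this is $\textup{(ii)}$. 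Finally $\textup{(ii)} \Rightarrow \textup{(iii)}$ bootstraps from $\textup{(ii)} \Rightarrow \textup{(i)}$: with $(X_0, Y_0, \varepsilon)$ as in $\textup{(ii)}$, set $\tau := (\psi|_{X_0 \times Y_0})|^{X \times Y} + \tfrac{\varepsilon}{2}$, so that $\tau|_{X_0 \times Y_0} = \psi|_{X_0 \times Y_0} + \tfrac{\varepsilon}{2}$ and hence $(\tau|_{X_0 \times Y_0})|^{X \times Y} + \tfrac{\varepsilon}{2} = (\psi|_{X_0 \times Y_0})|^{X \times Y} + \varepsilon \leq \varphi$; applying $\textup{(ii)} \Rightarrow \textup{(i)}$ to $\tau$ gives $\tau \vartriangleleft \varphi$, which is $\textup{(iii)}$ for the same $X_0$, $Y_0$ and $\tfrac{\varepsilon}{2}$.

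For the ``moreover'' clause, suppose the equivalent conditions hold, fix $(X_0, Y_0, \varepsilon)$ as in $\textup{(ii)}$, and write $\psi_0 := \psi|_{X_0 \times Y_0}$. The key point is a finite-dimensional rounding lemma: there is a \emph{rational-valued} $\rho \in \operatorname{Apx}(X_0, Y_0)$ with $\psi_0 + \tfrac{\varepsilon}{4} \leq \rho \leq \psi_0 + \tfrac{3\varepsilon}{4}$. Indeed, the real-valued members of $\operatorname{Apx}(X_0, Y_0)$ form a closed convex set $P \subseteq \mathbb{R}^{X_0 \times Y_0}$ cut out by the finitely many Kat\v{e}tov inequalities, and $P$ has nonempty interior since a constant function of sufficiently large value satisfies all of them with room to spare; consequently the rational points are dense in $P$. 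As $\psi_0 + t$ lies in $P$ for every $t \geq 0$ (a constant shift leaves all differences unchanged and only enlarges the sums), the point $\psi_0 + \tfrac{\varepsilon}{2}$ lies in $P$, and choosing a rational-valued $\rho \in P$ within sup-distance $\tfrac{\varepsilon}{4}$ of it gives the desired two-sided bound. I expect this rounding to be the only real obstacle: a naive coordinatewise rounding of $\psi_0 + \tfrac{\varepsilon}{2}$ need not remain Kat\v{e}tov, because the right-hand sides $d_X(x, x')$ and $d_Y(y, y')$ of the inequalities may be irrational, and it is the full-dimensionality of $P$ that salvages the argument.

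It then remains to verify $\psi \vartriangleleft \rho|^{X \times Y} \vartriangleleft \varphi$. From $\rho \geq \psi_0 + \tfrac{\varepsilon}{4}$, monotonicity of the trivial extension gives $\rho|^{X \times Y} \geq (\psi|_{X_0 \times Y_0})|^{X \times Y} + \tfrac{\varepsilon}{4}$, so $\textup{(ii)} \Rightarrow \textup{(i)}$ with $\rho|^{X \times Y}$ in the role of $\varphi$ yields $\psi \vartriangleleft \rho|^{X \times Y}$. From $\rho \leq \psi_0 + \tfrac{3\varepsilon}{4}$ we likewise get $\rho|^{X \times Y} \leq (\psi|_{X_0 \times Y_0})|^{X \times Y} + \tfrac{3\varepsilon}{4}$, so $\rho|^{X \times Y} + \tfrac{\varepsilon}{4} \leq (\psi|_{X_0 \times Y_0})|^{X \times Y} + \varepsilon \leq \varphi$; as $(\rho|^{X \times Y})|_{X_0 \times Y_0} = \rho$, this reads $\varphi \geq \bigl( (\rho|^{X \times Y})|_{X_0 \times Y_0} \bigr)|^{X \times Y} + \tfrac{\varepsilon}{4}$, so $\textup{(ii)} \Rightarrow \textup{(i)}$ with $\rho|^{X \times Y}$ in the role of $\psi$ yields $\rho|^{X \times Y} \vartriangleleft \varphi$, completing the plan.
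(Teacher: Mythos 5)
Your proof is correct. The equivalence of (i)--(iii) is handled in essentially the same way as the paper (the paper runs the circle (i)$\Rightarrow$(iii)$\Rightarrow$(ii)$\Rightarrow$(i), folding your neighbourhood argument for (ii)$\Rightarrow$(i) into the remark that $\vartriangleleft$ is preserved under trivial extensions; your separate bootstrap (ii)$\Rightarrow$(iii) is just a reorganization of the same content). Where you genuinely diverge is the ``moreover'' clause. The paper constructs $\rho$ by hand: it partitions $X_0 \times Y_0$ into the level sets $F_1 < \dots < F_n$ of $\psi$, subtracts a carefully chosen function $\delta$ that is constant on each $F_i$ and respects the gaps between consecutive levels, sets $\rho := \psi|_{X_0\times Y_0} - \delta + \varepsilon$, and then verifies the two Kat\v{e}tov inequalities directly using the monotonicity constraints on $\delta$. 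You instead observe that the real-valued approximate isometries on the finite set $X_0 \times Y_0$ form a closed convex polyhedron $P \subseteq \mathbb{R}^{X_0\times Y_0}$ with nonempty interior (a large constant satisfies every nontrivial defining inequality strictly), so rational points of $P$ are dense in $P$, and a rational point within $\varepsilon/4$ of $\psi|_{X_0\times Y_0} + \varepsilon/2$ does the job; the sandwich $\psi_0 + \varepsilon/4 \leq \rho \leq \psi_0 + 3\varepsilon/4$ then feeds back into (ii)$\Rightarrow$(i) exactly as you say. Your route buys a shorter, verification-free argument that makes clear \emph{why} rounding is possible (full-dimensionality of $P$), at the cost of invoking the standard convexity fact that the interior is dense in a convex set with nonempty interior; the paper's construction is more elementary and completely explicit, but requires the fiddly case-by-case check that $\rho$ is separately Kat\v{e}tov. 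One cosmetic remark: the degenerate constraints with $x = x'$ read $0 \leq 0$ and cannot be satisfied ``with room to spare,'' but they are vacuous and do not affect the interiority of your constant function, so your rounding lemma stands as written.
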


\begin{proof}
First, suppose (i) holds.  
Then there exist finite subsets $X_0 \subseteq X$ and $Y_0 \subseteq Y$ and 
a positive real number $\varepsilon$ such that the open neighborhood
\[
	U := \bigl\{ \psi' \in \operatorname{Apx}(X, Y) \bigm|
	|\psi'(x, y) - \psi(x, y)| < 2\varepsilon \text{ for any } x \in X_0, \ y \in Y_0 \bigr\}
\]
is included in $\operatorname{Apx}^{\leq\varphi}(X, Y)$.  
Clearly $(\psi|_{X_0 \times Y_0})|^{X \times Y} + \varepsilon$ belongs to $U$, 
so (iii) follows.  

It is trivial that (iii) implies (ii).  Now assume (ii).  
Since $\vartriangleleft$ is preserved under trivial extensions, 
the relation $\psi|_{X_0 \times Y_0} \vartriangleleft \psi|_{X_0 \times Y_0} + \varepsilon$ 
implies 
\[
	\psi \leq (\psi|_{X_0 \times Y_0})|^{X \times Y} 
	\vartriangleleft (\psi|_{X_0 \times Y_0})|^{X \times Y} + \varepsilon \leq \varphi, 
\]
so (i) holds.  

Finally, in order to find $\rho$ as in the statement, 
suppose $\psi \vartriangleleft \varphi$.  
Let $X_0, Y_0$ be as in the proof of (i) $\Rightarrow$ (iii) above, 
and $F_1, \dots, F_n$ be the partition of $X_0 \times Y_0$ induced by $\psi$.  
Without loss of generality, we may assume $\psi|_{F_1} < \dots < \psi|_{F_n}$.  
Take a function $\delta \colon X_0 \times Y_0 \to (0, \varepsilon)$ so that 
\begin{itemize}
\item
	$\delta$ is constant on each $F_i$; 
\item
	$\delta|_{F_n} < \psi|_{F_n} - \psi|_{F_{n-1}}$; 
\item
	$\delta|_{F_i} < \min\{\delta|_{F_{i+1}}, \psi|_{F_i} - \psi|_{F_{i-1}}\}$ 
	for $i = 2, \dots, n-1$; 
\item
	$\delta|_{F_1} < \min\{\delta|_{F_2}, \psi|_{F_1}\}$; and 
\item
	$\rho := \psi|_{X_0 \times Y_0} - \delta + \varepsilon$ is 
	a rational valued function on $X_0 \times Y_0$.  
\end{itemize}
We shall check that $\rho$ is separately Kat\v{e}tov so that it is an approximate isometry.  
The inequality 
\[
	d(x, x') \leq \rho(x, y) + \rho(x', y)
\]
is obvious, because $\rho \geq \psi|_{X_0 \times Y_0}$.  
On the other hand, for $(x, y) \in F_i$ and $(x', y) \in F_j$ with $i < j$, 
we have 
\begin{align*}
	\rho(x, y) 
	&= \psi|_{F_i} + \varepsilon - \delta|_{F_i} \\
	&= \psi|_{F_j} + \varepsilon - \delta|_{F_j} - 
	\bigl[\bigl((\psi|_{F_j} - \psi|_{F_i}) - \delta_{F_j}\bigr) + \delta|_{F_i}\bigr] \\
	&\leq \psi|_{F_j} + \varepsilon - \delta|_{F_j} = \rho(x', y) \\
	&\leq d(x, x') + \rho(x', y) 
\shortintertext{and}
	\rho(x', y)
	&= \psi(x', y) + \varepsilon - \delta|_{F_j} \\
	&\leq d(x', x) + \psi(x, y) + \varepsilon - \delta|_{F_i} \\
	&= d(x', x) + \rho(x, y), 
\end{align*}
so $\rho({}\cdot{}, y)$ is Kat\v{e}tov for each $y \in Y_0$.  
By symmetry, $\rho(x, {}\cdot{})$ is also Kat\v{e}tov, 
whence $\rho$ is an approximate isometry.  
Since clearly 
\[
	\psi|_{X_0 \times Y_0} \vartriangleleft \rho \vartriangleleft \varphi|_{X_0 \times Y_0}, 
\]
the conclusion follows.  
\end{proof}

\begin{lem}\label{lem:_perturbations_of_approximate_isometries}
Let $X, Y$ be metric spaces and 
$X_0, X_0' \subseteq X$ and $Y_0, Y_0' \subseteq Y$ be finite subsets.  
If $X_0'$ and $Y_0'$ are included in 
the $\varepsilon/5$-neighborhoods of $X_0$ and $Y_0$ respectively, 
then for any $\varphi \in \operatorname{Apx}(X, Y)$, the inequality 
\[
	(\varphi|_{X_0 \times Y_0})|^{X \times Y} + \varepsilon/5 
	\leq (\varphi|_{X_0' \times Y_0'})|^{X \times Y} + \varepsilon
\]
holds.  
\end{lem}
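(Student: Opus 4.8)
The plan is to unwind the trivial extension into an explicit infimum and then run a triangle-inequality estimate. First I would record that, combining the definition $\psi|^{X\times Y} = \varphi_\eta\psi\varphi_\iota^*$ with the formula for composition of approximate isometries, the trivial extension of any $\psi\in\operatorname{Apx}(X_0,Y_0)$ is given by
\[
	\psi|^{X\times Y}(x,y)
	= \inf_{x_0\in X_0,\, y_0\in Y_0}\bigl(d_X(x,x_0)+\psi(x_0,y_0)+d_Y(y_0,y)\bigr)
\]
for all $x\in X$ and $y\in Y$ (with the infimum over the empty set read as $\infty$). Specializing this to $\psi=\varphi|_{X_0\times Y_0}$ and to $\psi=\varphi|_{X_0'\times Y_0'}$, the inequality to be proved amounts to showing, for each fixed $x\in X$ and $y\in Y$, that
\begin{multline*}
	\inf_{x_0\in X_0,\, y_0\in Y_0}\bigl(d_X(x,x_0)+\varphi(x_0,y_0)+d_Y(y_0,y)\bigr) \\
	\le \inf_{x_0'\in X_0',\, y_0'\in Y_0'}\bigl(d_X(x,x_0')+\varphi(x_0',y_0')+d_Y(y_0',y)\bigr)+\tfrac{4}{5}\varepsilon .
\end{multline*}

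Next I would fix such $x$ and $y$, and an arbitrary pair $x_0'\in X_0'$, $y_0'\in Y_0'$ (there is nothing to prove if the right-hand infimum is $\infty$, so in particular one may assume all four finite sets are nonempty). Using the hypothesis, I would pick $x_0\in X_0$ and $y_0\in Y_0$ with $d_X(x_0,x_0')\le\varepsilon/5$ and $d_Y(y_0,y_0')\le\varepsilon/5$. Then I would estimate the $(x_0,y_0)$-term of the left-hand infimum: the triangle inequality gives $d_X(x,x_0)\le d_X(x,x_0')+\varepsilon/5$ and $d_Y(y_0,y)\le d_Y(y_0',y)+\varepsilon/5$, while applying the Kat\v{e}tov inequality for $\varphi$ first in the left variable and then in the right variable gives
\[
	\varphi(x_0,y_0)\le d_X(x_0,x_0')+\varphi(x_0',y_0)\le d_X(x_0,x_0')+d_Y(y_0,y_0')+\varphi(x_0',y_0')\le\varphi(x_0',y_0')+\tfrac{2}{5}\varepsilon .
\]
Adding these three inequalities shows that the $(x_0,y_0)$-term on the left is at most $d_X(x,x_0')+\varphi(x_0',y_0')+d_Y(y_0',y)+\tfrac{4}{5}\varepsilon$, hence so is the left-hand infimum; taking the infimum over $x_0'\in X_0'$ and $y_0'\in Y_0'$ then yields the displayed inequality, and adding $\varepsilon/5$ to both sides recovers the statement of the lemma.

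I do not anticipate a real obstacle. The only two points needing care are getting the explicit trivial-extension formula in the correct direction (so that the unperturbed sets $X_0,Y_0$ furnish the upper bound while the perturbed sets $X_0',Y_0'$ appear in the lower one) and chaining the two separate Kat\v{e}tov inequalities for $\varphi$ in the right order, so that replacing $(x_0',y_0')$ by $(x_0,y_0)$ inside $\varphi$ costs exactly $2\varepsilon/5$; together with the $\varepsilon/5$ coming from each of $d_X$ and $d_Y$ this accounts for the constant $\tfrac{4}{5}\varepsilon=\varepsilon-\varepsilon/5$ and explains the appearance of the factor $5$.
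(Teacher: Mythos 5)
Your proof is correct and is essentially the paper's argument: the same choice of nearby points in $X_0,Y_0$ and the same triangle-inequality/Kat\v{e}tov bookkeeping accounting for $\varepsilon = \varepsilon/5 + 4\varepsilon/5$. The only cosmetic difference is that you verify the inequality at every $(x,y)\in X\times Y$ via the explicit infimum formula for the trivial extension, whereas the paper checks it only on $X_0'\times Y_0'$ and then invokes the characterization $\theta\leq\psi|^{X\times Y}\iff\theta|_{X'\times Y'}\leq\psi$.
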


\begin{proof}
For $x' \in X_0'$ and $y' \in Y_0'$, there exist $x \in X_0$ and $y \in Y_0$ with 
$d(x', x) < \varepsilon/5$ and $d(y', y) < \varepsilon/5$, 
so we have 
\[
\begin{aligned}
	(\varphi|_{X_0 \times Y_0})|^{X \times Y} (x', y') + \varepsilon/5 
	&\leq d(x', x) + (\varphi|_{X_0 \times Y_0})|^{X \times Y}(x, y) + d(y, y') + \varepsilon/5 \\
	&= d(x', x) + \varphi(x, y) + d(y, y') + \varepsilon/5 \\
	&\leq 2d(x', x)+ \varphi(x', y') + 2d(y, y') + \varepsilon/5 \\
	&\leq \varphi(x', y') + \varepsilon.  
\end{aligned}
\]
\end{proof}


\section{Metric structures and approximate isomorphisms}
In this section, we recall the definition of metric structures and 
investigate fundamental properties of approximate isomorphisms.  
All the discussions are parallel to~\cite{yaacov15:_fraisse_limits}.  

By definition, a \emph{language} is a set $L$ such that 
each element of $L$ is either a \emph{function symbol} or a \emph{relation symbol}.  
To each symbol $S$ is associated a natural number $n_S$, which is called the \emph{arity} of $S$, 
and a symbol with arity $n$ is called an $n$-ary symbol.  
A $0$-ary function symbol is often called a \emph{constant symbol}.  

An \emph{$L$-structure} $\mathcal{M}$ is a complete metric structure $M$, 
which is called the \emph{domain} of $\mathcal{M}$, 
together with an \emph{interpretation} of symbols of $L$: 
\begin{itemize}
\item
	to each $n$-ary relation symbol $R$ is assigned a continuous map 
	$R^\mathcal{M}$ from $M^n$ to $\mathbb{R}$; and 
\item
	to each $n$-ary function symbol $f$ is assigned a continuous map $f^\mathcal{M}$ from 
	$M^n$ to $M$.  
\end{itemize}
For an $L$-structure $\mathcal{M}$, we shall denote its domain by $|\mathcal{M}|$.  

An \emph{$L$-embedding} of an $L$-structure $\mathcal{N}$ into 
another $L$-structure $\mathcal{M}$ is an isometry $\iota$
from $|\mathcal{N}|$ into $|\mathcal{M}|$ 
such that 
\begin{itemize}
\item
	for any $n$-ary relation symbol $R$ and any elements $a_1, \dots, a_n \in |\mathcal{N}|$, 
	the equation 
	\[
		R^\mathcal{N}(a_1, \dots, a_n) = R^\mathcal{M}\bigl(\iota(a_1), \dots, \iota(a_n)\bigr)
	\]
	holds, and
\item
	for any $n$-ary function symbol $f$ and any elements $a_1, \dots, a_n \in |\mathcal{N}|$, 
	the equation 
	\[
		\iota\bigl(f^\mathcal{N}(a_1, \dots, a_n)\bigr) 
		= f^\mathcal{M}\bigl(\iota(a_1), \dots, \iota(a_n)\bigr)
	\]
	holds.  
\end{itemize}
For an $L$-embedding $\iota \colon \mathcal{N} \to \mathcal{M}$ and 
a tuple $\bar{a} = (a_1, \dots, a_n) \in |\mathcal{N}|^n$, 
we shall write  the tuple $\bigl(\iota(a_1), \dots, \iota(a_n)\bigr) \in |\mathcal{M}|^n$ 
as $\iota(\bar{a})$.  

For a subset $E$ of an $L$-structure $\mathcal{M}$, 
the $L$-substructure generated by $E$ is denoted by $\langle E \rangle$.  
If it coincides with $\mathcal{M}$, 
then $E$ is said to be a \emph{generator} of $\mathcal{M}$.  
If $\mathcal{M}$ is generated by a finite subset, 
then $\mathcal{M}$ is said to be \emph{finitely generated}.  
A tuple $\bar{a} = (a_1, \dots, a_n) \in |\mathcal{M}|^n$ is 
an \emph{ordered generator} if $\{a_i \mid i = 1, \dots, n\}$ is a generator of $\mathcal{M}$.  

\medskip

In the sequel, we fix a language $L$ and a category $\mathscr{K}$ of 
finitely generated $L$-structures and $L$-embeddings.  
Embeddings and isomorphisms in $\operatorname{Mor}(\mathscr{K})$ are 
often referred to as \emph{$\mathscr{K}$-embeddings} and \emph{$\mathscr{K}$-isomorphisms} respectively.  
A \emph{joint $\mathscr{K}$-embedding} is a joint embedding $(\iota, \eta)$ such that 
both $\iota$ and $\eta$ are $\mathscr{K}$-embeddings.  
We denote by $\operatorname{JE}_\mathscr{K}(\mathcal{A}, \mathcal{B})$ 
the class of all joint $\mathscr{K}$-embeddings of $\mathcal{A}$ and $\mathcal{B}$.  

\begin{dfn}\label{dfn:_approximate_isomorphisms}
\begin{enumerate}
\item
	Let $\mathcal{A}, \mathcal{B}$ be objects of $\mathscr{K}$ and 
	$\iota \colon \mathcal{A} \dashrightarrow \mathcal{B}$ be a finite partial isometry, 
	that is, an isometry between finite subsets of $|\mathcal{A}|$ and $|\mathcal{B}|$.  
	Then $\iota$ is called a \emph{finite partial $\mathscr{K}$-isomorphism} if 
	\begin{itemize}
	\item
		the $L$-substructures $\langle \operatorname{dom} \iota \rangle$ and 
		$\langle \operatorname{ran} \iota \rangle$ are objects of $\mathscr{K}$; 
	\item
		the canonical embeddings 
		$\langle \operatorname{dom} \iota \rangle \to \mathcal{A}$ and 
		$\langle \operatorname{ran} \iota \rangle \to \mathcal{B}$ are 
		$\mathscr{K}$-embeddings; and 
	\item
		$\iota$ extends to a $\mathscr{K}$-isomorphism 
		from $\langle \operatorname{dom} \iota \rangle$ 
		onto $\langle \operatorname{ran} \iota \rangle$.  
	\end{itemize}
\item
	Let $\mathcal{A}, \mathcal{B}$ be objects of $\mathscr{K}$.  
	We denote by $\operatorname{Apx}_{2, \mathscr{K}}(\mathcal{A}, \mathcal{B})$ 
	the set of all approximate isometries from $|\mathcal{A}|$ to $|\mathcal{B}|$ 
	which are of the form $\varphi_{\iota, \eta}|^{\mathcal{A} \times \mathcal{B}}$, 
	where $\iota \colon \mathcal{A} \dashrightarrow \mathcal{C}$ and 
	$\eta \colon \mathcal{B} \dashrightarrow \mathcal{C}$ are 
	finite partial $\mathscr{K}$-isomorphisms into 
	some object $\mathcal{C}$ of $\mathscr{K}$.  
\item
	A \emph{$\mathscr{K}$-structure} is an $L$-structure $\mathcal{M}$ together with 
	an inductive system of $\mathscr{K}$-embeddings 
	\[
	\xymatrix{
		\mathcal{A}_1 \ar[r]^{\iota_1} & \mathcal{A}_2 \ar[r]^{\iota_2} &
		\mathcal{A}_3 \ar[r]^{\iota_3} & \cdots	
	}
	\]
	such that the inductive limit of the system as an $L$-structure is $\mathcal{M}$.  
	We often write $\mathcal{M} = \overline{\bigcup_n \mathcal{A}_n}$, 
	identifying each $\mathcal{A}_n$ as the corresponding $L$-substructure of $\mathcal{M}$.  
	Note that $\mathcal{M}$ is not necessarily an object of $\mathscr{K}$.  
\item
	For $\mathscr{K}$-structures $\mathcal{M} = \overline{\bigcup_n \mathcal{A}_n}$ and 
	$\mathcal{N} = \overline{\bigcup_m \mathcal{B}_m}$, we define 
	\[
		\operatorname{Apx}_\mathscr{K}(\mathcal{M}, \mathcal{N}) := 
		\operatorname{cl}\Bigl(\bigcup_{n, m} \bigl\{ \psi \bigm| \exists \varphi 
		\in \operatorname{Apx}_{2, \mathscr{K}}(\mathcal{A}_n, \mathcal{B}_m), \ 
		\psi \geq \varphi|^{\mathcal{M} \times \mathcal{N}}\bigr\}\Bigr)
	\]
	and call its elements \emph{approximate $\mathscr{K}$-isomorphisms}.  
	Also, we set 
	\[
	\begin{aligned}
		\operatorname{Apx}_\mathscr{K}^{\leq\varphi}(\mathcal{M}, \mathcal{N})
		&:= \operatorname{Apx}_\mathscr{K}(\mathcal{M}, \mathcal{N}) \cap 
		\operatorname{Apx}^{\leq\varphi}(|\mathcal{M}|, |\mathcal{N}|), \\
		\operatorname{Apx}_\mathscr{K}^{\vartriangleleft\varphi}(\mathcal{M}, \mathcal{N})
		&:= \operatorname{Apx}_\mathscr{K}(\mathcal{M}, \mathcal{N}) \cap 
		\operatorname{Apx}^{\vartriangleleft\varphi}(|\mathcal{M}|, |\mathcal{N}|).  
	\end{aligned}
	\]
	If $\operatorname{Apx}_\mathscr{K}^{\vartriangleleft\varphi}(\mathcal{M}, \mathcal{N})$ is 
	nonempty, then $\varphi$ is said to be \emph{strict}.  
	We denote the set of strict approximate $\mathscr{K}$-isomorphisms 
	from $\mathcal{M}$ to $\mathcal{N}$ by $\operatorname{Stx}_\mathscr{K}(\mathcal{M}, \mathcal{N})$.  
\item
	An $L$-embedding $\iota$ of a $\mathscr{K}$-structure 
	$\mathcal{M} = \overline{\bigcup_n \mathcal{A}_n}$ into 
	another $\mathscr{K}$-structure $\mathcal{N} = \overline{\bigcup_m \mathcal{B}_n}$ is 
	said to be \emph{$\mathscr{K}$-admissible} 
	if the corresponding approximate isometry $\varphi_\iota$ belongs to 
	$\operatorname{Apx}_\mathscr{K}(\mathcal{M}, \mathcal{N})$.  
	Two $\mathscr{K}$-structures are understood to be isomorphic 
	if there exists a $\mathscr{K}$-admissible isomorphism between them.  
\end{enumerate}
\end{dfn}

An object $\mathcal{A}$ of $\mathscr{K}$ can be canonically identified with 
a $\mathscr{K}$-structure obtained from the inductive system 
\[
\xymatrix{
	\mathcal{A} \ar[r]^{\operatorname{id}} & 
	\mathcal{A} \ar[r]^{\operatorname{id}} & \cdots,  
}
\]
so that we can consider $\operatorname{Apx}_\mathscr{K}(\mathcal{A},\mathcal{B})$ 
for objects $\mathcal{A}, \mathcal{B}$ of $\mathscr{K}$.  
If $\mathcal{A}, \mathcal{B}, \mathcal{C}$ are objects of $\mathscr{K}$ and 
$\iota \colon \mathcal{A} \to \mathcal{C}$ and $\eta \colon \mathcal{B} \to \mathcal{C}$ are 
$\mathscr{K}$-embeddings, then $\varphi_{\iota, \eta}$ belongs to 
$\operatorname{Apx}_\mathscr{K}(\mathcal{A},\mathcal{B})$, 
because it is the limit of 
\[
	\bigl\{ (\varphi_{\iota,\eta}|_{A_0 \times B_0})|^{\mathcal{A} \times \mathcal{B}} 
	\bigm| A_0 \subseteq |\mathcal{A}|, B_0 \subseteq |\mathcal{B}| 
	\text{ are finite generators} \bigr\} \subseteq 
	\operatorname{Apx}_{2, \mathscr{K}}(\mathcal{A},\mathcal{B}).  
\]
In particular, every $\mathscr{K}$-embedding is $\mathscr{K}$-admissible.  
On the other hand, note that there might be a $\mathscr{K}$-admissible isomorphism 
between objects of $\mathscr{K}$ which is not a morphism of $\mathscr{K}$. 
There can be even a $\mathscr{K}$-admissible $\iota \colon \mathcal{A} \to \mathcal{B}$ such that 
no net of $\mathscr{K}$-embeddings of $\mathcal{A}$ into $\mathcal{B}$ 
converges to $\iota$.  

For any approximate $\mathscr{K}$-isomorphism $\varphi$ from $\mathcal{M}$ to $\mathcal{N}$, 
the set $\operatorname{Apx}_\mathscr{K}^{\vartriangleleft\varphi}(\mathcal{M},\mathcal{N})$ is 
obviously included in the relative interior of 
$\operatorname{Apx}_\mathscr{K}^{\leq\varphi}(\mathcal{M},\mathcal{N})$ in 
$\operatorname{Apx}_\mathscr{K}(\mathcal{M},\mathcal{N})$.  
The opposite inclusion also holds, 
because any relative interior point $\psi$ in 
$\operatorname{Apx}_\mathscr{K}^{\leq\varphi}(\mathcal{M},\mathcal{N})$ satisfies~(ii) 
in Proposition~\ref{prop:_characterization_of_strict_domination}.  

Given a subset $A$ of $\operatorname{Apx}(X, Y)$, we shall define 
\[
	A^\uparrow := \{\psi \in \operatorname{Apx}(X, Y) \mid 
	\exists \varphi \in A, \ \psi \geq \varphi \}.  
\]
Then it can be shown that $\operatorname{cl}(A^\uparrow)$ is still upward closed, 
that is, $\operatorname{cl}(A^\uparrow)^\uparrow = \operatorname{cl}(A^\uparrow)$.  
Indeed, if $\varphi \geq \varphi'$ for $\varphi' \in \operatorname{cl}(A^\uparrow)$, 
then for any $\varepsilon > 0$ and any finite subsets $X_0 \subseteq X$ and $Y_0 \subseteq Y$, 
we can find an approximate isometry $\psi \in A^\uparrow$ such that 
the inequality $|\psi - \varphi'| < \varepsilon$ on $X_0 \times Y_0$ holds.  
It follows that $(\varphi|_{X_0 \times Y_0})^{X \times Y} + \varepsilon$ is in $A^\uparrow$, 
so $\varphi$ is in $\operatorname{cl}(A^\uparrow)$.  
In particular, 
$\operatorname{Apx}_\mathscr{K}(\mathcal{M}, \mathcal{N})$ is 
upward closed for any $\mathscr{K}$-structures $\mathcal{M} = \overline{\bigcup_n \mathcal{A}_n}$ 
and $\mathcal{N} = \overline{\bigcup_m \mathcal{B}_m}$. 
This argument also implies that $\operatorname{Stx}_\mathscr{K}(\mathcal{M}, \mathcal{N})$ is 
topologically dense in $\operatorname{Apx}_\mathscr{K}(\mathcal{M}, \mathcal{N})$, 
since for any approximate $\mathscr{K}$-isomorphism $\varphi$, 
it automatically follows that the approximate isometries of the form 
$(\varphi|_{M_0 \times N_0})^{\mathcal{M} \times \mathcal{N}} + \varepsilon$ 
are indeed strict approximate isomorphisms, 
where $M_0 \subseteq |\mathcal{M}|$ and $N_0 \subseteq |\mathcal{N}|$ 
are arbitrary finite subsets and $\varepsilon$ is any positive real number.  

\begin{dfn}\label{dfn:_jep_and_nap}
The category $\mathscr{K}$ is said to satisfy 
\begin{itemize}
\item
	the \emph{joint embedding property} (JEP) if 
	$\operatorname{JE}_\mathscr{K}(\mathcal{A}, \mathcal{B})$ is nonempty for 
	any objects $\mathcal{A}, \mathcal{B}$ of $\mathscr{K}$.  
\item
	the \emph{near amalgamation property} (NAP) if 
	for any objects $\mathcal{A}, \mathcal{B}_1, \mathcal{B}_2$ in $\mathscr{K}$, 
	any $\mathscr{K}$-embeddings $\iota_i \colon \mathcal{A} \to \mathcal{B}_i$, 
	any finite subset $F \subseteq |\mathcal{A}|$ and any $\varepsilon > 0$, 
	there exists a joint $\mathscr{K}$-embedding $(\eta_1, \eta_2)$ of $\mathcal{B}_1$ and 
	$\mathcal{B}_2$ such that the inequality 
	\[
		d\bigl(\eta_1 \circ \iota_1(a), \kern 3pt \eta_2 \circ \iota_2(a)\bigr) < \varepsilon
	\]
	holds for all $a \in F$.  
\end{itemize}
\end{dfn}

The following propositions are essential in 
proving the existence and uniqueness of Fraïssé limits in the next section.  
In fact, the gap of the theory presented in~\cite{eagle16:_fraisse_limits} is also related to 
these propositions, as is explained in Remark~\ref{rem:_the_gap}.  

\begin{prop}\label{prop:_joint_embedding_and_approximate_isomorphism}
Suppose that $\mathscr{K}$ satisfies NAP.  
Then for any objects $\mathcal{A}, \mathcal{B}$ of $\mathscr{K}$ and 
any strict approximate $\mathscr{K}$-isomorphism $\varphi$ from $\mathcal{A}$ to $\mathcal{B}$, 
there exists a joint $\mathscr{K}$-embedding $(\iota, \eta)$ of $\mathcal{A}$ and $\mathcal{B}$ 
satisfying $\varphi_{\iota, \eta} \vartriangleleft \varphi$.  
\end{prop}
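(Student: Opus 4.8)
The plan is to use strictness of $\varphi$ to trade the abstract approximate‑isometry data for a concrete pair of finite partial $\mathscr{K}$‑isomorphisms carrying a definite margin of slack, and then to complete that pair to a genuine joint $\mathscr{K}$‑embedding by two applications of NAP, spending the margin on the amalgamation errors. First, since $\varphi$ is strict I fix $\psi \in \operatorname{Apx}_\mathscr{K}^{\vartriangleleft\varphi}(\mathcal{A},\mathcal{B})$; using the last sentence of Proposition~\ref{prop:_characterization_of_strict_domination} together with upward‑closedness of $\operatorname{Apx}_\mathscr{K}(\mathcal{A},\mathcal{B})$, I may assume $\psi = \rho|^{\mathcal{A}\times\mathcal{B}}$ for a rational‑valued $\rho$ on a finite rectangle $A_0 \times B_0$ with $A_0 \subseteq |\mathcal{A}|$ and $B_0 \subseteq |\mathcal{B}|$, and the same proposition provides $\varepsilon > 0$ with $\varphi \geq \psi + \varepsilon$. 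Since $\mathcal{A}$ and $\mathcal{B}$ are objects of $\mathscr{K}$ their defining inductive systems are constant, so $\operatorname{Apx}_\mathscr{K}(\mathcal{A},\mathcal{B})$ is the closure of $\operatorname{Apx}_{2,\mathscr{K}}(\mathcal{A},\mathcal{B})^\uparrow$; approximating $\psi$ pointwise on the finite set $A_0 \times B_0$ within $\varepsilon/3$ by an element of $\operatorname{Apx}_{2,\mathscr{K}}(\mathcal{A},\mathcal{B})^\uparrow$, I obtain $\chi$ with $\chi \geq \varphi_0$ for some $\varphi_0 = \varphi_{\iota_0,\eta_0}|^{\mathcal{A}\times\mathcal{B}} \in \operatorname{Apx}_{2,\mathscr{K}}(\mathcal{A},\mathcal{B})$, where $\iota_0 \colon \mathcal{A} \dashrightarrow \mathcal{C}_0$ and $\eta_0 \colon \mathcal{B} \dashrightarrow \mathcal{C}_0$ are finite partial $\mathscr{K}$‑isomorphisms into some $\mathcal{C}_0 \in \mathscr{K}$. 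Monotonicity of restriction and of trivial extension then yields $\varphi \geq (\varphi_0|_{A_0 \times B_0})|^{\mathcal{A}\times\mathcal{B}} + \tfrac{2}{3}\varepsilon$.

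Next I complete $(\iota_0,\eta_0)$ to full $\mathscr{K}$‑embeddings. Writing $D_A = \operatorname{dom}\iota_0$, $D_B = \operatorname{dom}\eta_0$, $\mathcal{A}_0 = \langle D_A \rangle$ and $\mathcal{B}_0 = \langle D_B \rangle$, the definition of a finite partial $\mathscr{K}$‑isomorphism makes $\mathcal{A}_0,\mathcal{B}_0$ objects of $\mathscr{K}$, turns the inclusions $j_A \colon \mathcal{A}_0 \to \mathcal{A}$ and $j_B \colon \mathcal{B}_0 \to \mathcal{B}$ into $\mathscr{K}$‑embeddings, and (by composing the $\mathscr{K}$‑isomorphisms onto $\langle \operatorname{ran}\iota_0 \rangle$ and $\langle \operatorname{ran}\eta_0 \rangle$ with the canonical $\mathscr{K}$‑embeddings into $\mathcal{C}_0$) provides $\mathscr{K}$‑embeddings $k_A \colon \mathcal{A}_0 \to \mathcal{C}_0$ and $k_B \colon \mathcal{B}_0 \to \mathcal{C}_0$ restricting to $\iota_0$ on $D_A$ and to $\eta_0$ on $D_B$. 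I then apply NAP to the pair $(j_A, k_A)$ with finite set $D_A$ and tolerance $\delta$ to obtain a joint $\mathscr{K}$‑embedding $(p \colon \mathcal{A} \to \mathcal{D},\ q \colon \mathcal{C}_0 \to \mathcal{D})$ with $d\bigl(p(j_A(a)),\, q(k_A(a))\bigr) < \delta$ for all $a \in D_A$, and NAP again to $(j_B,\, q \circ k_B)$ with finite set $D_B$ and tolerance $\delta$ to obtain a joint $\mathscr{K}$‑embedding $(s \colon \mathcal{B} \to \mathcal{C},\ r \colon \mathcal{D} \to \mathcal{C})$ with $d\bigl(s(j_B(b)),\, r(q(k_B(b)))\bigr) < \delta$ for all $b \in D_B$. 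Putting $\iota := r \circ p$ and $\eta := s$, these are composites of $\mathscr{K}$‑embeddings, so $(\iota,\eta)$ is a joint $\mathscr{K}$‑embedding of $\mathcal{A}$ and $\mathcal{B}$.

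It remains to estimate $\varphi_{\iota,\eta}$. Chaining the two $\delta$‑matches through $\mathcal{C}_0$ gives $d\bigl(\iota(a),\eta(b)\bigr) < \varphi_{\iota_0,\eta_0}(a,b) + 2\delta$ for $a \in D_A$ and $b \in D_B$, and one further triangle inequality, taken over all $a' \in D_A$, $b' \in D_B$ and combined with the explicit infimum formula for $\varphi_0 = \varphi_{\iota_0,\eta_0}|^{\mathcal{A}\times\mathcal{B}}$, promotes this to $\varphi_{\iota,\eta} \leq \varphi_0 + 2\delta$ on all of $|\mathcal{A}| \times |\mathcal{B}|$. Since $\varphi_0 \leq (\varphi_0|_{A_0 \times B_0})|^{\mathcal{A}\times\mathcal{B}}$, restricting to $A_0 \times B_0$ and extending trivially once more gives $(\varphi_{\iota,\eta}|_{A_0 \times B_0})|^{\mathcal{A}\times\mathcal{B}} \leq (\varphi_0|_{A_0 \times B_0})|^{\mathcal{A}\times\mathcal{B}} + 2\delta$. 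Combining this with the inequality from the first paragraph and choosing $\delta < \varepsilon/3$, I get $\varphi \geq (\varphi_{\iota,\eta}|_{A_0 \times B_0})|^{\mathcal{A}\times\mathcal{B}} + (\tfrac{2}{3}\varepsilon - 2\delta)$ with a strictly positive constant, so condition (ii) of Proposition~\ref{prop:_characterization_of_strict_domination} holds for $\varphi_{\iota,\eta}$, and hence $\varphi_{\iota,\eta} \vartriangleleft \varphi$, as required.

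The step I expect to be most delicate is the bookkeeping in the last paragraph: one must ensure that the two NAP errors and the loss incurred in passing from the closure to the honest $\varphi_0$ together stay strictly below the slack $\varepsilon$ supplied by strictness, and — what really matters — that the final comparison is carried out at the level of trivial extensions over a common finite rectangle rather than merely pointwise, so that Proposition~\ref{prop:_characterization_of_strict_domination} genuinely applies. A secondary, purely technical nuisance is keeping the identifications straight (that $D_A \subseteq |\mathcal{A}|$, that $\mathcal{A}_0 = \langle D_A \rangle$, and that $k_A$ literally restricts to $\iota_0$), which is precisely what makes the chained triangle inequalities valid.
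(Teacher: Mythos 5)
Your proof is correct and follows essentially the same route as the paper's: strictness together with the density of $\operatorname{Apx}_{2,\mathscr{K}}(\mathcal{A},\mathcal{B})^\uparrow$ in $\operatorname{Apx}_\mathscr{K}(\mathcal{A},\mathcal{B})$ produces finite partial $\mathscr{K}$-isomorphisms $\iota_0,\eta_0$ into some $\mathcal{C}_0$ with a definite slack below $\varphi$, and NAP is then used to complete them to genuine $\mathscr{K}$-embeddings, spending the slack on the amalgamation errors and comparing via the triangle inequality. The only cosmetic differences are that you chain two applications of NAP (amalgamating $\mathcal{B}$ with $\mathcal{D}$ over $\mathcal{B}_0$ via $q\circ k_B$) where the paper uses three in a symmetric arrangement over $\mathcal{C}_0$, and that you certify $\varphi_{\iota,\eta}\vartriangleleft\varphi$ by checking condition (ii) of Proposition~\ref{prop:_characterization_of_strict_domination} rather than quoting $\varphi_{\iota_0,\eta_0}|^{\mathcal{A}\times\mathcal{B}}+\varepsilon\vartriangleleft\varphi$ directly.
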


\begin{proof}
Since $\varphi$ is strict, 
$\operatorname{Apx}_\mathscr{K}^{\vartriangleleft\varphi}(\mathcal{A}, \mathcal{B})$ is 
an open nonempty subset of 
$\operatorname{Apx}_\mathscr{K}(\mathcal{A}, \mathcal{B})$.        
Therefore, it intersects with $\operatorname{Apx}_{2, \mathscr{K}}(\mathcal{A}, \mathcal{B})$, 
as $\operatorname{Apx}_{2, \mathscr{K}}(\mathcal{A}, \mathcal{B})^\uparrow$ is a dense subset.  
In other words, there exist an object $\mathcal{C}_0$ of $\mathscr{K}$ and 
finite partial $\mathscr{K}$-isomorphisms 
$\iota_0 \colon \mathcal{A} \dashrightarrow \mathcal{C}_0$ and 
$\eta_0 \colon \mathcal{B} \dashrightarrow \mathcal{C}_0$ such that the relation 
$\varphi_{\iota_0, \eta_0}|^{\mathcal{A} \times \mathcal{B}} + \varepsilon 
\vartriangleleft \varphi$ holds 
for some $\varepsilon > 0$.  
Put $\mathcal{A}_0 := \langle \operatorname{dom} \iota_0 \rangle$ and 
$\mathcal{B}_0 := \langle \operatorname{dom} \eta_0 \rangle$.  
By the definition of finite partial $\mathscr{K}$-isomorphisms, 
the canonical embeddings $\mathcal{A}_0 \to \mathcal{A}$ and 
$\mathcal{B}_0 \to \mathcal{B}$ are $\mathscr{K}$-embeddings.  

Now, by NAP there exist $\mathscr{K}$-embeddings
$\iota_1 \colon \mathcal{A} \to \mathcal{C}_\mathcal{A}$ and 
$\iota_1' \colon \mathcal{C}_0 \to \mathcal{C}_\mathcal{A}$ such that 
the inequality $d\bigl(\iota_1(a), \kern 3pt \iota_1' \circ \iota_0(a)\bigr) < \varepsilon/3$ holds 
for all $a \in \operatorname{dom} \iota_0$.  
Similarly, there are $\mathscr{K}$-embeddings $\eta_1 \colon \mathcal{B} \to \mathcal{C}_\mathcal{B}$ 
and $\eta_1' \colon \mathcal{C}_0 \to \mathcal{C}_\mathcal{B}$ with 
$d\bigl(\eta_1(b), \kern 3pt \eta_1' \circ \eta_(b)\bigr) < \varepsilon/3$ 
for all $b \in \operatorname{dom} \eta_0$.  
Then, again by NAP, there exist $\mathscr{K}$-embeddings 
$\iota_2 \colon \mathcal{C}_\mathcal{A} \to \mathcal{C}$ and 
$\eta_2 \colon \mathcal{C}_\mathcal{B} \to \mathcal{C}$ with 
$d\bigl(\iota_2 \circ \iota_1'(c), \kern 3pt \eta_2 \circ \eta_1'(c)\bigr) < \varepsilon/3$ 
for any $c \in \operatorname{ran} \iota_0 \cup \operatorname{ran} \iota_0$.  
\[
\xymatrix{
	& \mathcal{A}_0 \ar@{^{(}->}[r] \ar@{-->}[d]^{\iota_0} 
		& \mathcal{A} \ar[d]^{\iota_1} \\
	\mathcal{B}_0 \ar@{-->}[r]^{\eta_0} \ar@{_{(}->}[d] 
		& \mathcal{C}_0 \ar[r]^{\iota_1'} \ar[d]^{\eta_1'} 
		& \mathcal{C}_\mathcal{A} \ar[d]^{\iota_2} \\
	\mathcal{B} \ar[r]^{\eta_1} & \mathcal{C}_\mathcal{B} \ar[r]^{\eta_2} & \mathcal{C}
}
\]

Set $\iota := \iota_2 \circ \iota_1$ and $\eta := \eta_2 \circ \eta_1$.  
Then for $a \in \operatorname{dom} \iota_0$ and $b \in \operatorname{dom} \eta_0$, we have 
\[
\begin{aligned}
	d\bigl(\iota(a), \eta(b)\bigr) 
	&= d\bigl(\iota_2 \circ \iota_1(a), \kern 3pt \eta_2 \circ \eta_1(b)\bigr) \\
	&\leq d\bigl(\iota_2 \circ \iota_1' \circ \iota_0(a), \kern 3pt 
	\eta_2 \circ \eta_1' \circ \eta_0(b)\bigr) + 2\varepsilon/3 \\
	&\leq d\bigl(\iota_2 \circ \iota_1' \circ \iota_0(a), \kern 3pt 
	\iota_2 \circ \iota_1' \circ \eta_0(b)\bigr) + \varepsilon 
	= d\bigl(\iota_0(a), \eta_0(b)\bigr) + \varepsilon, \\	
\end{aligned}
\]
so $\varphi_{\iota, \eta} \leq 
\varphi_{\iota_0, \eta_0}|^{\mathcal{A} \times \mathcal{B}} + \varepsilon 
\vartriangleleft \varphi$, as desired.  
\end{proof}

\begin{prop}\label{prop:_compositions_of_approximate_isomorphisms}
Let $\mathcal{M}_1 = \overline{\bigcup_l \mathcal{A}_l}$, 
$\mathcal{M}_2 = \overline{\bigcup_m \mathcal{B}_m}$ and 
$\mathcal{M}_3 = \overline{\bigcup_n \mathcal{C}_n}$ be 
$\mathscr{K}$-structures.  
If $\varphi$ and $\psi$ belongs to $\operatorname{Apx}_\mathscr{K}(\mathcal{M}_1, \mathcal{M}_2)$ and 
$\operatorname{Apx}_\mathscr{K}(\mathcal{M}_2, \mathcal{M}_3)$ respectively, 
then the composition $\psi\varphi$ is 
in $\operatorname{Apx}_\mathscr{K}(\mathcal{M}_1, \mathcal{M}_3)$.  
\end{prop}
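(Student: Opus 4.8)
The plan is to reduce the assertion to a finite, quantitative claim, and then produce the required witness by amalgamating the relevant objects of $\mathscr{K}$. First I would record the general principle, already implicit in the discussion following Definition~\ref{dfn:_approximate_isomorphisms}: since $\operatorname{Apx}_\mathscr{K}(\mathcal{M}_1,\mathcal{M}_3)$ is topologically closed and upward closed, in order to place $\psi\varphi$ in it it is enough to check that for every pair of finite subsets $M_0\subseteq|\mathcal{M}_1|$, $P_0\subseteq|\mathcal{M}_3|$ and every $\varepsilon>0$ there are indices $l,n$ and $\xi\in\operatorname{Apx}_{2,\mathscr{K}}(\mathcal{A}_l,\mathcal{C}_n)$ with $\xi|^{\mathcal{M}_1\times\mathcal{M}_3}(x,z)\leq\psi\varphi(x,z)+\varepsilon$ for all $x\in M_0$, $z\in P_0$. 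Indeed, by the adjunction between restriction and trivial extension (Example~(4)) together with the commutation of relaxation with composition, such a $\xi$ gives $\xi|^{\mathcal{M}_1\times\mathcal{M}_3}\leq(\,(\psi\varphi)|_{M_0\times P_0}\,)|^{\mathcal{M}_1\times\mathcal{M}_3}+\varepsilon$, so the latter belongs to $\operatorname{Apx}_\mathscr{K}(\mathcal{M}_1,\mathcal{M}_3)$; and letting $M_0,P_0$ exhaust $|\mathcal{M}_1|,|\mathcal{M}_3|$ and $\varepsilon\to0$, these approximate isometries converge pointwise to $\psi\varphi$, which is therefore in $\operatorname{Apx}_\mathscr{K}(\mathcal{M}_1,\mathcal{M}_3)$ by closedness.

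Now fix $M_0,P_0,\varepsilon$. For each $(x,z)\in M_0\times P_0$ pick $y_{x,z}\in|\mathcal{M}_2|$ with $\varphi(x,y_{x,z})+\psi(y_{x,z},z)<\psi\varphi(x,z)+\varepsilon/2$, and let $N_0\subseteq|\mathcal{M}_2|$ be the finite set of these $y_{x,z}$. Since $\varphi\in\operatorname{Apx}_\mathscr{K}(\mathcal{M}_1,\mathcal{M}_2)$ and $\psi\in\operatorname{Apx}_\mathscr{K}(\mathcal{M}_2,\mathcal{M}_3)$, the definition of these sets supplies $\chi\in\operatorname{Apx}_{2,\mathscr{K}}(\mathcal{A}_l,\mathcal{B}_m)$ with $\chi|^{\mathcal{M}_1\times\mathcal{M}_2}\leq\varphi+\varepsilon/6$ on $M_0\times N_0$, and $\omega\in\operatorname{Apx}_{2,\mathscr{K}}(\mathcal{B}_{m'},\mathcal{C}_n)$ with $\omega|^{\mathcal{M}_2\times\mathcal{M}_3}\leq\psi+\varepsilon/6$ on $N_0\times P_0$; replacing $m,m'$ by a common larger index I may take $m=m'$, since the trivial extension of a member of $\operatorname{Apx}_{2,\mathscr{K}}(\mathcal{A}_l,\mathcal{B}_m)$ along a structure map of $\{\mathcal{B}_j\}$ again lies in $\operatorname{Apx}_{2,\mathscr{K}}$ (the generated substructures and their canonical $\mathscr{K}$-embeddings are unchanged). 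A short computation, in which the infimum over $|\mathcal{M}_2|$ collapses because $\mathcal{B}_m$ is isometrically embedded in $\mathcal{M}_2$ and the two factors are separately Kat\v{e}tov, shows $(\omega|^{\mathcal{M}_2\times\mathcal{M}_3})(\chi|^{\mathcal{M}_1\times\mathcal{M}_2})=(\omega\chi)|^{\mathcal{M}_1\times\mathcal{M}_3}$, where $\omega\chi\in\operatorname{Apx}(|\mathcal{A}_l|,|\mathcal{C}_n|)$; and evaluating at $y_{x,z}$ and combining the three estimates gives $(\omega\chi)|^{\mathcal{M}_1\times\mathcal{M}_3}(x,z)<\psi\varphi(x,z)+\varepsilon$ on $M_0\times P_0$. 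So it remains only to find $\xi\in\operatorname{Apx}_{2,\mathscr{K}}(\mathcal{A}_l,\mathcal{C}_n)$ with $\xi\leq\omega\chi$ (or $\leq\omega\chi$ plus an arbitrarily small constant) on $|\mathcal{A}_l|\times|\mathcal{C}_n|$.

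This last point is the heart of the matter and the step I expect to be the main obstacle. Write $\chi=\varphi_{\iota,\eta}|^{\mathcal{A}_l\times\mathcal{B}_m}$ and $\omega=\varphi_{\iota',\eta'}|^{\mathcal{B}_m\times\mathcal{C}_n}$ with finite partial $\mathscr{K}$-isomorphisms $\iota\colon\mathcal{A}_l\dashrightarrow\mathcal{C}$, $\eta\colon\mathcal{B}_m\dashrightarrow\mathcal{C}$, $\iota'\colon\mathcal{B}_m\dashrightarrow\mathcal{D}$, $\eta'\colon\mathcal{C}_n\dashrightarrow\mathcal{D}$. Unwinding the trivial extensions, $\omega\chi(a,c)$ is the infimum of $d(a,a_0)+d_\mathcal{C}(\iota(a_0),\eta(b_0))+d_{\mathcal{B}_m}(b_0,b_1)+d_\mathcal{D}(\iota'(b_1),\eta'(c_1))+d(c_1,c)$ over $a_0\in\operatorname{dom}\iota$, $b_0\in\operatorname{dom}\eta$, $b_1\in\operatorname{dom}\iota'$, $c_1\in\operatorname{dom}\eta'$, so $\omega\chi$ is the trivial extension to $|\mathcal{A}_l|\times|\mathcal{C}_n|$ of the function $\rho$ on $\operatorname{dom}\iota\times\operatorname{dom}\eta'$ given by the inner infimum over $b_0,b_1$. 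To dominate $\rho$ from above, up to an arbitrarily small constant, by some $\varphi_{\mu,\nu}$ with finite partial $\mathscr{K}$-isomorphisms $\mu\colon\mathcal{A}_l\dashrightarrow\mathcal{E}$, $\nu\colon\mathcal{C}_n\dashrightarrow\mathcal{E}$ into a single $\mathcal{E}\in\mathscr{K}$, I would first arrange that $\eta$ and $\iota'$ have a common trace in $\mathcal{B}_m$, i.e.\ that $\langle\operatorname{dom}\eta\rangle=\langle\operatorname{dom}\iota'\rangle$ as a $\mathscr{K}$-object carrying $\mathscr{K}$-embeddings into $\mathcal{C}$ (via $\eta$) and into $\mathcal{D}$ (via $\iota'$) — this requires passing far enough into the inductive systems and enlarging the domains so that the two finite partial $\mathscr{K}$-isomorphisms can be taken to agree on a common finitely generated $\mathscr{K}$-substructure of $\mathcal{M}_2$, and I expect this trace-matching to be the delicate part. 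Once this is done, amalgamating $\mathcal{C}$ and $\mathcal{D}$ over this common object inside some $\mathcal{E}\in\mathscr{K}$ — the step where an amalgamation-type property of $\mathscr{K}$ enters — and taking $\mu,\nu$ to be $\iota,\eta'$ followed by the structure maps into $\mathcal{E}$, a triangle-inequality estimate using that the amalgamation identifies the two copies of the common trace up to an arbitrarily small error yields $\varphi_{\mu,\nu}\leq\rho+(\text{small})$ on $\operatorname{dom}\iota\times\operatorname{dom}\eta'$, hence $\xi:=\varphi_{\mu,\nu}|^{\mathcal{A}_l\times\mathcal{C}_n}\leq\omega\chi+(\text{small})$; by the reduction of the first paragraph this completes the proof. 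Everything outside the trace-matching and amalgamation step is bookkeeping with trivial extensions, relaxations, and the Kat\v{e}tov inequalities.
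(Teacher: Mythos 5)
Your reduction in the first two paragraphs is sound and close in spirit to what the paper does: the closedness/upward-closedness argument, the passage to a common index $m=m'$ by trivially extending along the inductive system, the identity $(\omega|^{\mathcal{M}_2\times\mathcal{M}_3})(\chi|^{\mathcal{M}_1\times\mathcal{M}_2})=(\omega\chi)|^{\mathcal{M}_1\times\mathcal{M}_3}$, and the $\varepsilon$-bookkeeping are all fine. But at the step you yourself flag as the heart of the matter there is a genuine gap: the ``trace-matching'' you propose — enlarging the domains of the two given finite partial $\mathscr{K}$-isomorphisms $\eta\colon\mathcal{B}_m\dashrightarrow\mathcal{C}$ and $\iota'\colon\mathcal{B}_m\dashrightarrow\mathcal{D}$ so that $\langle\operatorname{dom}\eta\rangle=\langle\operatorname{dom}\iota'\rangle$ — has no justification and is in general impossible. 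These witnesses come from the definition of $\operatorname{Apx}_\mathscr{K}$ with no control over their domains; a finite partial $\mathscr{K}$-isomorphism has a fixed target $\mathcal{C}$, and nothing in the axioms (JEP, NAP) lets you extend it to a prescribed larger substructure of $\mathcal{B}_m$ while keeping a compatible image in $\mathcal{C}$ or a $\mathscr{K}$-extension of it. ``Passing far enough into the inductive system'' does not help either, since it changes neither $\operatorname{dom}\eta$ nor $\operatorname{dom}\iota'$. So the amalgamation you want to perform over the ``common trace'' never gets off the ground; this is precisely the kind of obstruction that Remark~\ref{rem:_the_gap} is about.

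The correct resolution avoids matching domains altogether. The paper first reduces (as you do) to the case where the three structures are the objects $\mathcal{A}_l$, $\mathcal{B}_m$, $\mathcal{C}_n$ themselves and the approximate isomorphisms are strict, and then invokes Proposition~\ref{prop:_joint_embedding_and_approximate_isomorphism} — itself a two-fold application of NAP — to upgrade the strict approximate isomorphisms to \emph{total} joint $\mathscr{K}$-embeddings $\iota_1\colon\mathcal{A}_l\to\mathcal{D}$, $\iota_2\colon\mathcal{B}_m\to\mathcal{D}$ and $\eta_2\colon\mathcal{B}_m\to\mathcal{E}$, $\eta_3\colon\mathcal{C}_n\to\mathcal{E}$ with $\varphi_{\iota_1,\iota_2}\vartriangleleft\varphi$ and $\varphi_{\eta_2,\eta_3}\vartriangleleft\psi$. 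Now $\iota_2$ and $\eta_2$ are honest $\mathscr{K}$-embeddings of the single object $\mathcal{B}_m$, so NAP applies directly to amalgamate $\mathcal{D}$ and $\mathcal{E}$ over a suitable finite subset $F_0\subseteq|\mathcal{B}_m|$ (chosen via Proposition~\ref{prop:_characterization_of_strict_domination} so that the restrictions to $\mathcal{M}_1\times F_0$ and $F_0\times\mathcal{M}_3$ still strictly dominate), and the resulting $\varphi_{\theta_1\circ\iota_1,\,\theta_2\circ\eta_3}$ is dominated by $\psi\varphi$, which then lies in $\operatorname{Apx}_\mathscr{K}(\mathcal{M}_1,\mathcal{M}_3)$ by upward closedness. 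In short: the common object over which to amalgamate is the middle term $\mathcal{B}_m$ of the inductive system, made available by totalizing the partial data through NAP, not a common substructure carved out of the two partial isomorphisms. If you replace your third paragraph by this argument (note also that NAP must be assumed, as it is throughout that section), the rest of your proposal goes through.
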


\begin{proof}
First, assume that both $\varphi$ and $\psi$ are strict 
and $\mathcal{M}_1$, $\mathcal{M}_2$, $\mathcal{M}_3$ are objects of $\mathscr{K}$.  
Then, by Proposition~\ref{prop:_joint_embedding_and_approximate_isomorphism}, 
there exist objects $\mathcal{D}$ and $\mathcal{E}$ of $\mathscr{K}$ and 
$\mathscr{K}$-embeddings $\iota_i \colon \mathcal{M}_i \to \mathcal{D} \ (i = 1, 2)$ and
$\eta_j \colon \mathcal{M}_j \to \mathcal{E} \ (j = 2, 3)$ such that 
$\varphi_{\iota_1, \iota_2} \vartriangleleft \varphi$ and 
$\varphi_{\eta_2, \eta_3} \vartriangleleft \psi$.  
It follows from Proposition~\ref{prop:_characterization_of_strict_domination} that 
there exist a finite subset $F_0 \subseteq |\mathcal{M}_2|$ and 
a positive real number $\varepsilon > 0$ with 
\[
\begin{aligned}
(\varphi_{\iota_1, \iota_2}|_{\mathcal{M}_1 \times F_0})|^{\mathcal{M}_1 \times \mathcal{M}_2} 
+ \varepsilon &\vartriangleleft \varphi, & 
(\varphi_{\eta_2, \eta_3}|_{F_0 \times \mathcal{M}_3})|^{\mathcal{M}_2 \times \mathcal{M}_3} 
+ \varepsilon &\vartriangleleft \psi.  
\end{aligned}
\]
By NAP, we can find $\mathscr{K}$-embeddings $\theta_1 \colon \mathcal{D} \to \mathcal{F}$ and 
$\theta_2 \colon \mathcal{E} \to \mathcal{F}$ such that 
the inequality $d\bigl(\theta_1 \circ \iota_2(b), \kern 3pt 
\theta_2 \circ \eta_2(b)\bigr) < 2\varepsilon$ holds 
for all $b \in F_0$.  
\[
\xymatrix{
	 & & \mathcal{M}_1 \ar[d]^{\iota_1} \\
	 & \mathcal{M}_2 \ar[r]^{\iota_2} \ar[d]^{\eta_2} & \mathcal{D} \ar[d]^{\theta_1} \\
	\mathcal{M}_3 \ar[r]^{\eta_3} & \mathcal{E} \ar[r]^{\theta_2} & \mathcal{F}
}
\]
For $a \in |\mathcal{M}_1|$ and $c \in |\mathcal{M}_3|$, we have 
\[
\begin{aligned}
	&d\bigl(\theta_1 \circ \iota_1(a), \kern 3pt \theta_2 \circ \eta_3(c)\bigr) \\
	\leq& \inf_{b \in F_0} \Bigl[ d\bigl(\theta_1 \circ \iota_1(a), \kern 3pt 
	\theta_1 \circ \iota_2(b)\bigr) + 
	d\bigl(\theta_1 \circ \iota_2(b), \kern 3pt \theta_2 \circ \eta_3(c)\bigr) \Bigr] \\
	<& \inf_{b \in F_0} \Bigl[ d\bigl(\theta_1 \circ \iota_1(a), \kern 3pt 
	\theta_1 \circ \iota_2(b)\bigr) + 
	d\bigl(\theta_2 \circ \eta_2(b), \kern 3pt \theta_2 \circ \eta_3(c)\bigr) 
	+ 2\varepsilon \Bigr] \\
	=& \bigl(\varphi_{\eta_2, \eta_3}|_{F_0 \times \mathcal{M}_3} + \varepsilon\bigr)
	\bigl(\varphi_{\iota_1, \iota_2}|_{\mathcal{M}_1 \times F_0} + \varepsilon\bigr)(a, c), 
\end{aligned}
\]
so 
\[
	\varphi_{\theta_1 \circ \iota_1, \theta_2 \circ \eta_3} 
	\leq \Bigl[\bigl(\varphi_{\eta_2, \eta_3}|_{F_0 \times \mathcal{M}_3} + 
	\varepsilon\bigr)|^{\mathcal{M}_2 \times \mathcal{M}_3}\Bigr]
	\Bigl[\bigl(\varphi_{\iota_1, \iota_2}|_{\mathcal{M}_1 \times F_0} + 
	\varepsilon\bigr)|^{\mathcal{M}_1 \times \mathcal{M}_2}\Bigr] 
	\leq \psi\varphi.  
\]
Since $\varphi_{\theta_1 \circ \iota_1, \theta_2 \circ \eta_3}$ is in 
$\operatorname{Apx}_\mathscr{K}(\mathcal{M}_1, \mathcal{M}_3)$, so is $\psi\varphi$.  

Next, assume that both $\varphi$ and $\psi$ are still strict, 
but $\mathcal{M}_1$, $\mathcal{M}_2$ and $\mathcal{M}_3$ are 
general $\mathscr{K}$-structures.  
Then there exist sufficiently large $l, m, m', n \in \mathbb{N}$ and 
approximate $\mathscr{K}$-isomorphisms 
$\varphi'$ from $\mathcal{A}_l$ to $\mathcal{B}_m$ and 
$\psi'$ from $\mathcal{B}_{m'}$ to $\mathcal{C}_n$ with
$\varphi'|^{\mathcal{M}_1 \times \mathcal{M}_2} \vartriangleleft \varphi$ and 
$\psi'|^{\mathcal{M}_2 \times \mathcal{M}_3} \vartriangleleft \psi$.  
We may assume without loss of generality that 
$m$ is equal to $m'$, since in general, 
if $\iota \colon \mathcal{A} \to \mathcal{A}'$ and $\eta \colon \mathcal{B} \to \mathcal{B}'$ 
are $\mathscr{K}$-embeddings, then one can directly check from the definition that 
the trivial extension of an approximate $\mathscr{K}$-isomorphism in 
$\operatorname{Apx}_{2, \mathscr{K}}(\mathcal{A}, \mathcal{B})$ 
via these $\mathscr{K}$-embeddings belong to 
$\operatorname{Apx}_{2, \mathscr{K}}(\mathcal{A}', \mathcal{B}')$.  
Also, we may assume that both $\varphi'$ and $\psi'$ are strict 
by Proposition~\ref{prop:_characterization_of_strict_domination}.  
By what we proved in the preceding paragraph, $\psi'\varphi'$ is 
in $\operatorname{Apx}_\mathscr{K}(\mathcal{A}_l, \mathcal{C}_n)$.  
By direct computation, one can check that 
\[
	(\psi'|^{\mathcal{M}_2 \times \mathcal{M}_3})
	(\varphi'|^{\mathcal{M}_1 \times \mathcal{M}_2}) 
	= (\psi'\varphi)|^{\mathcal{M}_1 \times \mathcal{M}_3}, 
\]
so $\psi\varphi$ is in $\operatorname{Apx}_\mathscr{K}(\mathcal{M}_1, \mathcal{M}_3)$.  

Finally, let $\varphi$ and $\psi$ be general approximate $\mathscr{K}$-isomorphisms 
between general $\mathscr{K}$-structures.  
Then there exist nets $\{\varphi_\alpha\}$ and $\{\psi_\beta\}$ of 
strict approximate $\mathscr{K}$-isomorphisms which converge to $\varphi$ and $\psi$ respectively, 
and 
\[
	\psi\varphi = (\varlimsup_\beta \psi_\beta)(\varlimsup_\alpha \varphi_\alpha) 
	\geq \varlimsup_{\alpha, \beta}(\psi_\beta\varphi_\alpha) 
	\in \operatorname{Apx}_\mathscr{K}(\mathcal{M}_1, \mathcal{M}_3), 
\]
so $\psi\varphi$ belongs to 
$\operatorname{Apx}_\mathscr{K}(\mathcal{M}_1, \mathcal{M}_3)$.  
\end{proof}

\begin{cor}\label{cor:_trivial_extensions_and_restrictions}
Extensions and restrictions of approximate $\mathscr{K}$-isomorphisms via 
$\mathscr{K}$-admissible embeddings are approximate $\mathscr{K}$-isomorphisms.  
\end{cor}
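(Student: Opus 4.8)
The plan is to deduce this directly from Proposition~\ref{prop:_compositions_of_approximate_isomorphisms} by writing restrictions and trivial extensions as compositions of approximate isometries. Recall from item~(4) of the example following Definition~\ref{dfn:_approximate_isometries} that, for isometric embeddings $\iota\colon\mathcal{M}'\to\mathcal{M}$ and $\eta\colon\mathcal{N}'\to\mathcal{N}$, the restriction of an approximate isometry $\varphi$ from $|\mathcal{M}|$ to $|\mathcal{N}|$ along $(\iota,\eta)$ is $\varphi_\eta^{*}\varphi\varphi_\iota^{}$, and the trivial extension of an approximate isometry $\psi$ from $|\mathcal{M}'|$ to $|\mathcal{N}'|$ along $(\iota,\eta)$ is $\varphi_\eta^{}\psi\varphi_\iota^{*}$; these identities use only that $\iota$ and $\eta$ are isometries, so they apply verbatim when $\iota$ and $\eta$ are $\mathscr{K}$-admissible embeddings between $\mathscr{K}$-structures. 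Since composition of approximate isometries is associative, it then suffices to know that each of $\varphi_\iota^{}$, $\varphi_\iota^{*}$, $\varphi_\eta^{}$, $\varphi_\eta^{*}$ is an approximate $\mathscr{K}$-isomorphism between the relevant pair of $\mathscr{K}$-structures and to apply Proposition~\ref{prop:_compositions_of_approximate_isomorphisms} twice.

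The only point that is not immediate is that taking adjoints preserves approximate $\mathscr{K}$-isomorphisms, that is, $\operatorname{Apx}_\mathscr{K}(\mathcal{M},\mathcal{N})^{*}=\operatorname{Apx}_\mathscr{K}(\mathcal{N},\mathcal{M})$ for any $\mathscr{K}$-structures $\mathcal{M}=\overline{\bigcup_n\mathcal{A}_n}$ and $\mathcal{N}=\overline{\bigcup_m\mathcal{B}_m}$. First I would note that $\operatorname{Apx}_{2, \mathscr{K}}$ is self-adjoint: the adjoint of $\varphi_{\iota_0,\eta_0}|^{\mathcal{A}_n\times\mathcal{B}_m}$, for finite partial $\mathscr{K}$-isomorphisms $\iota_0\colon\mathcal{A}_n\dashrightarrow\mathcal{C}$ and $\eta_0\colon\mathcal{B}_m\dashrightarrow\mathcal{C}$, is $\varphi_{\eta_0,\iota_0}|^{\mathcal{B}_m\times\mathcal{A}_n}\in\operatorname{Apx}_{2, \mathscr{K}}(\mathcal{B}_m,\mathcal{A}_n)$. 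Then, because $\theta\mapsto\theta^{*}$ is an order isomorphism and a homeomorphism that commutes with trivial extensions (and reverses compositions), it carries the upward closure $\bigl(\bigcup_{n,m}\{\psi\mid\exists\varphi\in\operatorname{Apx}_{2, \mathscr{K}}(\mathcal{A}_n,\mathcal{B}_m),\ \psi\geq\varphi|^{\mathcal{M}\times\mathcal{N}}\}\bigr)$ and its closure, which define $\operatorname{Apx}_\mathscr{K}(\mathcal{M},\mathcal{N})$, onto the analogous sets defining $\operatorname{Apx}_\mathscr{K}(\mathcal{N},\mathcal{M})$. In particular, when $\iota$ is $\mathscr{K}$-admissible this gives $\varphi_\iota^{*}\in\operatorname{Apx}_\mathscr{K}(\mathcal{M},\mathcal{M}')$, and similarly for $\eta$.

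With this established, the restriction $\varphi_\eta^{*}\varphi\varphi_\iota^{}$ of $\varphi\in\operatorname{Apx}_\mathscr{K}(\mathcal{M},\mathcal{N})$ is the composition of $\varphi_\iota^{}\in\operatorname{Apx}_\mathscr{K}(\mathcal{M}',\mathcal{M})$, $\varphi\in\operatorname{Apx}_\mathscr{K}(\mathcal{M},\mathcal{N})$ and $\varphi_\eta^{*}\in\operatorname{Apx}_\mathscr{K}(\mathcal{N},\mathcal{N}')$, hence lies in $\operatorname{Apx}_\mathscr{K}(\mathcal{M}',\mathcal{N}')$ by Proposition~\ref{prop:_compositions_of_approximate_isomorphisms}; and the trivial extension $\varphi_\eta^{}\psi\varphi_\iota^{*}$ of $\psi\in\operatorname{Apx}_\mathscr{K}(\mathcal{M}',\mathcal{N}')$ is the composition of $\varphi_\iota^{*}\in\operatorname{Apx}_\mathscr{K}(\mathcal{M},\mathcal{M}')$, $\psi\in\operatorname{Apx}_\mathscr{K}(\mathcal{M}',\mathcal{N}')$ and $\varphi_\eta^{}\in\operatorname{Apx}_\mathscr{K}(\mathcal{N}',\mathcal{N})$, hence lies in $\operatorname{Apx}_\mathscr{K}(\mathcal{M},\mathcal{N})$. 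There is no serious obstacle here; the only work is in unwinding the definition of $\operatorname{Apx}_\mathscr{K}$ to check that the adjoint, trivial-extension and composition operations interact as claimed.
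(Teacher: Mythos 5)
Your argument is correct and follows exactly the route the paper intends: the corollary is stated right after Proposition~\ref{prop:_compositions_of_approximate_isomorphisms} precisely because restrictions and trivial extensions along admissible embeddings are the compositions $\varphi_\eta^{*}\varphi\varphi_\iota^{}$ and $\varphi_\eta^{}\psi\varphi_\iota^{*}$, and admissibility puts $\varphi_\iota^{}$, $\varphi_\eta^{}$ in $\operatorname{Apx}_\mathscr{K}$. Your explicit check that the adjoint operation carries $\operatorname{Apx}_{2,\mathscr{K}}$ to $\operatorname{Apx}_{2,\mathscr{K}}$ and hence $\operatorname{Apx}_\mathscr{K}(\mathcal{M},\mathcal{N})$ onto $\operatorname{Apx}_\mathscr{K}(\mathcal{N},\mathcal{M})$ is the one detail the paper leaves tacit, and it is verified correctly.
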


\section{Fraïssé categories and their limits}
Let $L$ be a language and $\mathscr{K}$ be a category of 
finitely generated $L$-structures and $L$-embeddings with JEP and NAP.  
For each $n \in \mathbb{N}$, we denote by $\mathscr{K}_n$ 
the class of all pairs $\langle \mathcal{A}, \bar{a} \rangle$, 
where $\mathcal{A}$ is an object of $\mathscr{K}$ and 
$\bar{a}$ is an ordered generator of $\mathcal{A}$.  
We simply write $\langle \bar{a} \rangle$ instead of 
$\langle \mathcal{A}, \bar{a} \rangle$ when there is no danger of confusion.  

For each $n$, we consider a pseudo-metric on $\mathscr{K}_n$ defined by 
\[
\begin{aligned}
	d^\mathscr{K}\bigl(\langle \bar{a} \rangle, \langle \bar{b} \rangle\bigr) 
	:=& \inf \bigl\{ \max_i \varphi(a_i, b_i) \bigm| 
	\varphi \in \operatorname{Apx}_\mathscr{K}
	\bigl(\langle \bar{a} \rangle, \langle \bar{b} \rangle\bigr) \bigr\} \\
	=& \inf \bigl\{ \max_i \varphi(a_i, b_i) \bigm| 
	\varphi \in \operatorname{Stx}_\mathscr{K}
	\bigl(\langle \bar{a} \rangle, \langle \bar{b} \rangle\bigr) \bigr\} \\
	=& \inf \bigl\{ \max_i d\bigl(\iota(a_i), \eta(b_i)\bigr) \bigm| 
	(\iota, \eta) \in \operatorname{JE}_\mathscr{K}
	\bigl(\langle \bar{a} \rangle, \langle \bar{b} \rangle\bigr) \bigr\}, 
\end{aligned}
\]
where $a_i$ and $b_i$ denotes the $i$-th component of $\bar{a}$ and $\bar{b}$ respectively.  
The fact that $d^\mathscr{K}$ is indeed a pseudo-metric follows from JEP and NAP.  

\begin{dfn}\label{dfn:_wpp_and_ccp}
The category $\mathscr{K}$ is said to satisfy 
\begin{itemize}
\item
	the \emph{weak Polish property} (WPP) if 
	$\mathscr{K}_n$ is separable with respect to the pseudo-metric $d^\mathscr{K}$ for each $n$.  
\item
	the \emph{Cauchy continuity property} (CCP) if 
	\begin{enumerate}[label=(\roman*)]
	\item
		for any $n$-ary predicate symbol $P$ in $L$, the map 
		\[
			\bigl\langle \mathcal{A}, (\bar{a}, \bar{b}) \bigr\rangle \mapsto P^\mathcal{A}(\bar{a})
		\]
		from $\mathscr{K}_{n+m}$ into $\mathbb{R}$ sends 
		Cauchy sequences into Cauchy sequences; and 
	\item
		for any $n$-ary function symbol $f$ in $L$, the map 
		\[
			\bigl\langle \mathcal{A}, (\bar{a}, \bar{b}) \bigr\rangle \mapsto 
			\bigl\langle \mathcal{A}, 
			\bigl(\bar{a}, \bar{b}, f^\mathcal{A}(\bar{a})\bigr) \bigr\rangle 
		\]
		from $\mathscr{K}_{n+m}$ into $\mathscr{K}_{n+m+1}$ sends 
		Cauchy sequences into Cauchy sequences.  
	\end{enumerate}
\end{itemize}
\end{dfn}

\begin{rem}\label{rem:_effect_of_ccp}
If $\mathscr{K}$ satisfies CCP, 
then $d^\mathscr{K}(\langle \bar{a} \rangle, \langle \bar{b} \rangle)$ is equal to zero 
if and only if there exists a $\mathscr{K}$-admissible isomorphism from 
$\langle \bar{a} \rangle$ onto $\langle \bar{b} \rangle$ which sends $a_i$ to $b_i$.  
To see this, first suppose that the map $a_i \mapsto b_i$ extends to 
a $\mathscr{K}$-admissible isomorphism $\iota$.  
Then $(\varphi_\iota|_{\bar{a} \times \bar{b}})
|^{\langle \bar{a} \rangle \times \langle \bar{b} \rangle} + \varepsilon$ belongs to 
$\operatorname{Stx}_\mathscr{K}\bigl(\langle \bar{a} \rangle, \langle \bar{b} \rangle\bigr)$, and 
\[
	d^\mathscr{K}\bigl(\langle \bar{a} \rangle, \langle \bar{b} \rangle\bigr) 
	\leq (\varphi_\iota|_{\bar{a} \times \bar{b}})
	|^{\langle \bar{a} \rangle \times \langle \bar{b} \rangle}(a_i, b_i) + \varepsilon 
	= \varepsilon
\]
for arbitrary $\varepsilon > 0$.  
Conversely, suppose $d^\mathscr{K}\bigl(\langle \bar{a} \rangle, \langle \bar{b} \rangle\bigr) = 0$.  
Let $D_{\bar{a}}$ be the set of all elements of $\langle \bar{a} \rangle$ of the form 
$g(\bar{a})$, where $g$ is a composition of functions equipped with $\langle \bar{a} \rangle$, 
and $D_{\bar{b}}$ be the set obtained from $\langle \bar{b} \rangle$ by the same way.  
Then it follows from CCP that the map $a_i \mapsto b_i$ extends to 
an isometry from $D_{\bar{a}}$ onto $D_{\bar{b}}$ and 
the interpretations of the symbols can be identified via this isometry, 
so that it extends to an $L$-isomorphism $\iota$ from $\langle \bar{a} \rangle$ onto 
$\langle \bar{b} \rangle$.  
If $\bar{c} = \bigl(g_1(\bar{a}), \dots, g_n(\bar{a})\bigr)$ and 
$\bar{d} = \bigl(g_1(\bar{b}), \dots, g_n(\bar{b})\bigr)$, then 
$d^\mathscr{K}\bigl(\langle \bar{a}, \bar{c} \rangle, \langle \bar{b}, \bar{d} \rangle\bigr) = 0$ 
by CCP, so there exists a joint $\mathscr{K}$-embedding $(\eta_1, \eta_2)$ of 
$\langle \bar{a}, \bar{c} \rangle$ and $\langle \bar{b}, \bar{d} \rangle$ such that 
the $\eta_1(\bar{a}, \bar{c})$ and $\eta_2(\bar{b}, \bar{d})$ are arbitrarily close to each other, 
whence $\iota$ is $\mathscr{K}$-admissible.  
\end{rem}

\begin{dfn}\label{dfn:_fraisse_category}
\begin{enumerate}
\item
	A category $\mathscr{K}$ of finitely generated separable $L$-structures is 
	called a \emph{Fraïssé category} if it satisfies JEP, NAP, WPP and CCP. 
\item
	Let $\mathscr{K}$ be a Fraïssé class.   
	A $\mathscr{K}$-structure $\mathcal{M}$ is called a \emph{Fraïssé limit} of $\mathscr{K}$ 
	if for any $\mathscr{K}$-structure $\mathcal{N}$ and 
	any strict approximate $\mathscr{K}$-isomorphism 
	$\varphi \in \operatorname{Stx}_\mathscr{K}(\mathcal{N}, \mathcal{M})$, 
	there exists a $\mathscr{K}$-admissible embedding $\iota \colon \mathcal{N} \to \mathcal{M}$ with 
	$\varphi_\iota \vartriangleleft \varphi$.  
\end{enumerate}
\end{dfn}

We shall begin with characterizing Fraïssé limits.  
Fix a Fraïssé class $\mathscr{K}$.  

\begin{dfn}\label{dfn:_ultra-homogeneity_and_universality}
A $\mathscr{K}$-structure $\mathcal{M}$ is said to be 
\begin{itemize}
\item
	\emph{$\mathscr{K}$-universal} if for any object $\mathcal{A}$ of $\mathscr{K}$, 
	there exists a $\mathscr{K}$-admissible embedding of $\mathcal{A}$ into $\mathcal{M}$.  
\item
	\emph{approximately $\mathscr{K}$-ultra-homogeneous} if 
	for any $\langle \bar{a} \rangle \in \mathscr{K}_n$, any $\varepsilon > 0$ and 
	any $\mathscr{K}$-admissible embeddings 
	$\iota, \eta \colon \langle \bar{a} \rangle \to \mathcal{M}$, 
	there exists a $\mathscr{K}$-admissible automorphism $\alpha$ of $\mathcal{M}$ 
	with $\max_i d\bigl(\alpha \circ \iota(a_i), \kern 3pt \eta(a_i)\bigr) \leq \varepsilon$.  
\end{itemize}
\end{dfn}

\begin{thm}\label{thm:_uniqueness_of_fraisse_limit}
For a $\mathscr{K}$-structure $\mathcal{M}$, the following are equivalent.  
\begin{enumerate}[label=\textup{(\roman*)}]
\item
	The structure $\mathcal{M}$ is a Fraïssé limit of $\mathscr{K}$.  
\item
	For any object $\mathcal{A}$ of $\mathscr{K}$ and any 
	$\varphi \in \operatorname{Stx}_\mathscr{K}(\mathcal{A}, \mathcal{M})$, 
	there exists a $\mathscr{K}$-admissible embedding 
	$\iota \colon \mathcal{A} \to \mathcal{M}$ with $\varphi_\iota \vartriangleleft \varphi$.  
\item
	If $\langle \bar{a} \rangle$ is in $\mathscr{K}_n$ and $\varphi$ is 
	a strict approximate $\mathscr{K}$-isomorphism from $\langle \bar{a} \rangle$ to $\mathcal{M}$, 
	then for any $\varepsilon > 0$ there is an approximate $\mathscr{K}$-isomorphism 
	$\psi \in \operatorname{Stx}_\mathscr{K}^{\vartriangleleft\varphi}
	\bigl(\langle \bar{a} \rangle, \mathcal{M}\bigr)$ such that 
	$\psi|_{\bar{a} \times \mathcal{M}}$ is $\varepsilon$-total.  
\item
	The structure $\mathcal{M}$ is $\mathscr{K}$-universal and 
	approximately $\mathscr{K}$-ultra-homogeneous.  
\end{enumerate}
Moreover, if a Fraïssé limit exists, then it is unique up to $\mathscr{K}$-admissible isomorphisms.  
\end{thm}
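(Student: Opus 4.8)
The plan is to prove the cycle \textup{(i)} $\Rightarrow$ \textup{(ii)} $\Rightarrow$ \textup{(iii)} $\Rightarrow$ \textup{(iv)} $\Rightarrow$ \textup{(i)} and then to deduce uniqueness from \textup{(i)}. The first arrow is free: an object of $\mathscr{K}$, viewed via the constant inductive system, is a $\mathscr{K}$-structure, so \textup{(ii)} is exactly the defining property of a Fraïssé limit (Definition~\ref{dfn:_fraisse_category}\,(2)) restricted to $\mathscr{K}$-structures $\mathcal{N}$ that are members of $\mathscr{K}$. For \textup{(ii)} $\Rightarrow$ \textup{(iii)}: given $\langle\bar{a}\rangle$ and $\varphi\in\operatorname{Stx}_\mathscr{K}(\langle\bar{a}\rangle,\mathcal{M})$, apply \textup{(ii)} to get a $\mathscr{K}$-admissible $\iota\colon\langle\bar{a}\rangle\to\mathcal{M}$ with $\varphi_\iota\vartriangleleft\varphi$ and take for $\psi$ a suitable trivial extension of a finite restriction of $\varphi_\iota$ (chosen as in Proposition~\ref{prop:_characterization_of_strict_domination}, with the restricting set containing $\iota(\bar a)$), relaxed by a real $0<\delta<\varepsilon$; then $\psi\in\operatorname{Stx}_\mathscr{K}^{\vartriangleleft\varphi}(\langle\bar a\rangle,\mathcal{M})$, its restriction $\psi|_{\bar a\times\mathcal{M}}$ coincides with $\varphi_\iota|_{\bar a\times\mathcal{M}}+\delta$, and since $\varphi_\iota^*\varphi_\iota(a_i,a_j)=d\bigl(\iota(a_i),\iota(a_j)\bigr)=d(a_i,a_j)$, this restriction is $\delta$-total, hence $\varepsilon$-total.

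For \textup{(iii)} $\Rightarrow$ \textup{(iv)}, I would first note that iterating \textup{(iii)} on a fixed member $\mathcal{A}$ of $\mathscr{K}$, with the (always strict) input $\varphi\equiv\infty$ and along an increasing sequence of ordered generating tuples of $\mathcal{A}$ whose entries exhaust a countable dense subset of $|\mathcal{A}|$, produces a decreasing sequence $\varphi_0\vartriangleright\varphi_1\vartriangleright\cdots$ in $\operatorname{Stx}_\mathscr{K}(\mathcal{A},\mathcal{M})$ with $\varphi_k$ becoming $2^{-k}$-total on longer and longer tuples; the pointwise infimum $\varphi_\infty\in\operatorname{Apx}_\mathscr{K}(\mathcal{A},\mathcal{M})$ is $0$-total on a dense set, so by Proposition~\ref{prop:_characterization_of_epsilon-totality} and completeness of $|\mathcal{M}|$ it has the form $\varphi_\iota$ for an isometry, which CCP---used exactly as in Remark~\ref{rem:_effect_of_ccp}---upgrades to a $\mathscr{K}$-admissible $L$-embedding. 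This yields $\mathscr{K}$-universality (and, with a strict input in place of $\infty$, recovers \textup{(ii)}). Approximate $\mathscr{K}$-ultra-homogeneity I would get by a back-and-forth carried over via approximate isomorphisms as in~\cite{yaacov15:_fraisse_limits}: given $\mathscr{K}$-admissible $\iota,\eta\colon\langle\bar a\rangle\to\mathcal{M}$ and $\varepsilon>0$, start from the strict approximate $\mathscr{K}$-isomorphism $\varphi_0:=(\varphi_\eta\varphi_\iota^*|_{M_0\times M_0})|^{\mathcal{M}\times\mathcal{M}}+\delta$ from $\mathcal{M}$ to $\mathcal{M}$---which is well defined by Proposition~\ref{prop:_compositions_of_approximate_isomorphisms}, with $M_0\supseteq\iota(\bar a)\cup\eta(\bar a)$ finite, $0<\delta<\varepsilon$, and $\varphi_0\bigl(\iota(a_i),\eta(a_i)\bigr)\le\delta$---and alternately improve totality and surjectivity along exhausting sequences of members of the defining inductive system of $\mathcal{M}$, invoking \textup{(iii)} and Corollary~\ref{cor:_trivial_extensions_and_restrictions}; passing to the limit and applying Proposition~\ref{prop:_characterization_of_epsilon-totality} to the limiting approximate isomorphism and its transpose, together with CCP, gives a $\mathscr{K}$-admissible automorphism $\alpha$ with $\varphi_\alpha\le\varphi_0$, so that $\max_i d\bigl(\alpha\circ\iota(a_i),\eta(a_i)\bigr)\le\delta<\varepsilon$.

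For \textup{(iv)} $\Rightarrow$ \textup{(i)}, let $\mathcal{N}=\overline{\bigcup_n\mathcal{B}_n}$ with nested ordered generating tuples $\bar b^{(n)}$ and let $\varphi\in\operatorname{Stx}_\mathscr{K}(\mathcal{N},\mathcal{M})$. The heart is a one-step statement (which is really \textup{(iv)} $\Rightarrow$ \textup{(ii)}): given a member $\mathcal{B}$ and a strict $\theta\in\operatorname{Stx}_\mathscr{K}(\mathcal{B},\mathcal{M})$, the description of $\operatorname{Apx}_\mathscr{K}(\mathcal{B},\mathcal{M})$ as a closure together with Proposition~\ref{prop:_characterization_of_strict_domination} provides finite partial $\mathscr{K}$-isomorphisms $\mu\colon\mathcal{B}\dashrightarrow\mathcal{C}$, $\nu\colon\mathcal{A}_m\dashrightarrow\mathcal{C}$ into some $\mathcal{C}\in\mathscr{K}$ with $\varphi_{\mu,\nu}|^{\mathcal{B}\times\mathcal{M}}+\varepsilon'\vartriangleleft\theta$; amalgamating $\mathcal{B}$, $\mathcal{A}_m$ and $\mathcal{C}$ by NAP as in the proof of Proposition~\ref{prop:_joint_embedding_and_approximate_isomorphism} yields $\mathcal{D}\in\mathscr{K}$ with $\mathscr{K}$-embeddings $\lambda\colon\mathcal{B}\to\mathcal{D}$, $\kappa\colon\mathcal{A}_m\to\mathcal{D}$ such that $\varphi_{\lambda,\kappa}$ approximates $\varphi_{\mu,\nu}$ on $\operatorname{dom}\mu\times\operatorname{dom}\nu$; then $\mathscr{K}$-universality embeds $\mathcal{D}$ $\mathscr{K}$-admissibly into $\mathcal{M}$ by some $\rho$, and approximate $\mathscr{K}$-ultra-homogeneity furnishes a $\mathscr{K}$-admissible automorphism $\gamma$ of $\mathcal{M}$ with $\gamma\circ\rho\circ\kappa$ as close as desired, on a prescribed finite set, to the canonical inclusion $\mathcal{A}_m\hookrightarrow\mathcal{M}$, so that $\gamma\circ\rho\circ\lambda\colon\mathcal{B}\to\mathcal{M}$---which is $\mathscr{K}$-admissible since compositions of $\mathscr{K}$-admissible embeddings are, by Proposition~\ref{prop:_compositions_of_approximate_isomorphisms}---satisfies $\varphi_{\gamma\rho\lambda}\vartriangleleft\theta$, provided one fixes the finite data witnessing $\theta$'s strictness first and then chooses the error in $\gamma$ small enough. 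To get \textup{(i)} from this, build $\mathscr{K}$-admissible $\iota_n\colon\mathcal{B}_n\to\mathcal{M}$ by applying the one-step statement to $\varphi|_{\mathcal{B}_1\times\mathcal{M}}$ for $n=1$ and to $(\varphi_{\iota_{n-1}}|_{\bar b^{(n-1)}\times\mathcal{M}})|^{\mathcal{B}_n\times\mathcal{M}}+\varepsilon_n$ for $n\ge2$, with $\varepsilon_n>0$ shrinking fast enough---via Lemma~\ref{lem:_perturbations_of_approximate_isometries}---that their cumulative effect on the finite set witnessing $\varphi_{\iota_1}\vartriangleleft\varphi|_{\mathcal{B}_1\times\mathcal{M}}$ is negligible; the $\iota_n$ are equicontinuous and Cauchy on the dense subset of $|\mathcal{N}|$ generated by the $\bar b^{(n)}$, hence converge pointwise to a $\mathscr{K}$-admissible $L$-embedding $\iota\colon\mathcal{N}\to\mathcal{M}$ with $\varphi_\iota|_{\mathcal{B}_1\times\mathcal{M}}\vartriangleleft\varphi|_{\mathcal{B}_1\times\mathcal{M}}$, whence $\varphi_\iota\vartriangleleft\varphi$ by Proposition~\ref{prop:_characterization_of_strict_domination}.

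Uniqueness follows by a back-and-forth between two Fraïssé limits $\mathcal{M}_1=\overline{\bigcup_n\mathcal{A}_n}$ and $\mathcal{M}_2=\overline{\bigcup_m\mathcal{B}_m}$: starting from $\varphi\equiv\infty$ and using \textup{(i)} for $\mathcal{M}_2$ on the forth steps and for $\mathcal{M}_1$ on the back steps, together with Corollary~\ref{cor:_trivial_extensions_and_restrictions}, build a decreasing sequence of strict approximate $\mathscr{K}$-isomorphisms from $\mathcal{M}_1$ to $\mathcal{M}_2$ whose infimum is $0$-total and $0$-surjective, hence of the form $\varphi_\alpha$ for an isometric bijection $\alpha$ by Proposition~\ref{prop:_characterization_of_epsilon-totality}, which CCP promotes to a $\mathscr{K}$-admissible $L$-isomorphism. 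The step I expect to be the main obstacle---and the one whose earlier mishandling is the gap in~\cite{eagle16:_fraisse_limits} corrected in Section~3---is the recursive scheme underlying the three back-and-forth arguments: each cycle restricts an approximate $\mathscr{K}$-isomorphism to a finitely generated substructure, improves its totality, and re-extends, and one must check that the result still lies in $\operatorname{Apx}_\mathscr{K}$, is still strictly dominated by (resp.\ dominates) its predecessor, and---since trivial extensions are large---is combined with the predecessor in a way that remains a genuine approximate isometry; making the error parameters and finite witnessing sets fit together so that the construction converges is the delicate part, and it is essential to work along the defining inductive systems, where compositions of approximate isomorphisms between members are available by Proposition~\ref{prop:_compositions_of_approximate_isomorphisms}. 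That the limit is then a genuine $\mathscr{K}$-admissible $L$-embedding, not merely an approximate $\mathscr{K}$-isomorphism, is guaranteed by Proposition~\ref{prop:_characterization_of_epsilon-totality} for the underlying isometry and by CCP, used as in Remark~\ref{rem:_effect_of_ccp}, for the interpretations of the symbols.
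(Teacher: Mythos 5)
Most of your outline runs parallel to the paper's proof (the chain/back-and-forth of strict approximate $\mathscr{K}$-isomorphisms with alternating $\delta$-totality and $\delta$-surjectivity, Proposition~\ref{prop:_characterization_of_epsilon-totality} plus CCP to extract a genuine admissible map, and universality+homogeneity for the converse direction), but your step establishing (i) — the full Fraïssé-limit property for an arbitrary $\mathscr{K}$-structure $\mathcal{N}=\overline{\bigcup_n\mathcal{B}_n}$ — has a genuine gap. After stage $1$ your recursion applies the one-step statement to $(\varphi_{\iota_{n-1}}|_{\bar b^{(n-1)}\times\mathcal{M}})|^{\mathcal{B}_n\times\mathcal{M}}+\varepsilon_n$, i.e.\ from stage $2$ on the original constraint $\varphi$ is never consulted again; and your final inference, ``$\varphi_\iota|_{\mathcal{B}_1\times\mathcal{M}}\vartriangleleft\varphi|_{\mathcal{B}_1\times\mathcal{M}}$, whence $\varphi_\iota\vartriangleleft\varphi$ by Proposition~\ref{prop:_characterization_of_strict_domination}'', is false. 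That proposition requires finite sets $X_0\subseteq|\mathcal{N}|$, $Y_0\subseteq|\mathcal{M}|$ and $\varepsilon>0$ with $\varphi\geq(\varphi_\iota|_{X_0\times Y_0})|^{\mathcal{N}\times\mathcal{M}}+\varepsilon$, an inequality against the \emph{full} $\varphi$ on the full product; since the trivial extension dominates $\varphi_\iota$, the relation $\varphi_\iota\vartriangleleft\varphi$ forces $d(\iota(x),y)+\varepsilon\leq\varphi(x,y)$ for \emph{every} $x\in|\mathcal{N}|$, $y\in|\mathcal{M}|$. A strict $\varphi\in\operatorname{Stx}_\mathscr{K}(\mathcal{N},\mathcal{M})$ can perfectly well impose a nontrivial constraint at a point $b\in|\mathcal{B}_2|\setminus|\mathcal{B}_1|$ (for instance $\varphi=(\rho|_{X\times Y})|^{\mathcal{N}\times\mathcal{M}}+\varepsilon$ with $b\in X$, pinning $\iota(b)$ near a prescribed point of $\mathcal{M}$), and nothing in your construction after stage $1$ respects it; already for two-point spaces one can write down $\psi,\varphi$ with $\psi|_{\{x_0\}\times Y}\vartriangleleft\varphi|_{\{x_0\}\times Y}$ but $\psi\not\leq\varphi$, so strict domination of restrictions does not propagate. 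The same defect also undercuts closing your cycle, since (iv)$\Rightarrow$(i) is the only place where arbitrary $\mathscr{K}$-structures are handled.

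The paper avoids this by never producing intermediate genuine embeddings at all: starting from a strict $\varphi$, it builds a single \emph{decreasing} chain $\psi_0\vartriangleright\psi_1\vartriangleright\cdots$ of strict approximate $\mathscr{K}$-isomorphisms with $\psi_0\vartriangleleft\varphi$, each of the form $(\theta|_{X_i\times Y_j})|^{\mathcal{N}\times\mathcal{M}}+\delta$ with the finite sets exhausting dense subsets and with improving totality (and, for the isomorphism/uniqueness statements, alternating surjectivity); since $\vartriangleleft$ is downward closed, the limit $\psi_\infty$ automatically satisfies $\psi_\infty\vartriangleleft\varphi$, and only at the end is Proposition~\ref{prop:_characterization_of_epsilon-totality} (with completeness and CCP) used to recognize $\psi_\infty$ as $\varphi_\iota$ for the desired admissible embedding. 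So the fix for your argument is to keep every stage of the recursion strictly below the original $\varphi$ (refining the previous approximate isomorphism rather than restarting from $\varphi_{\iota_{n-1}}$ on a tuple), exactly as in the first paragraph of the paper's proof; your (ii)$\Rightarrow$(iii), (iii)$\Rightarrow$(iv) and uniqueness sketches are then consistent with the paper's route, and the paper moreover closes the equivalence more economically via (iv)$\Rightarrow$(ii) together with (iii)$\Rightarrow$(i), never needing a direct (iv)$\Rightarrow$(i).
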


\begin{proof}
First, assume that $\mathcal{M} = \overline{\bigcup_n \mathcal{A}_n}$ and 
$\mathcal{N} = \overline{\bigcup_m \mathcal{B}_m}$ are $\mathscr{K}$-structures satisfying~(iii).  
We shall show that if $\varphi$ is a strict approximate $\mathscr{K}$-isomorphism 
from $\mathcal{M}$ to $\mathcal{N}$, 
then there exists a $\mathscr{K}$-admissible isomorphism 
$\alpha$ from $\mathcal{M}$ onto $\mathcal{N}$ with $\varphi_\alpha \vartriangleleft \varphi$.  
Since $\varphi$ is strict, 
there exist an approximate $\mathscr{K}$-isomorphism $\psi$ from $\mathcal{M}$ to $\mathcal{N}$, 
finite subsets $E \subseteq \bigcup_n |\mathcal{A}_n|$ and 
$F \subseteq \bigcup_m |\mathcal{B}_m|$, and a positive real number $\varepsilon \leq 1$ 
with 
\[
	(\psi|_{E \times F})|^{\mathcal{M} \times \mathcal{N}} + \varepsilon 
	\vartriangleleft \varphi.  
\]
Take increasing sequences $\{X_i\}$ and $\{Y_j\}$ of finite sets such that 
\begin{itemize}
\item
	$X_0 = E$ and $Y_0 = F$; 
\item
	$X_i \subseteq \bigcup_n |\mathcal{A}_n|$ and $Y_j \subseteq \bigcup_m |\mathcal{B}_m|$ 
	for all $i, j$; and 
\item
	$\bigcup_i X_i$ and $\bigcup_j Y_j$ are dense in $|\mathcal{M}|$ and $|\mathcal{N}|$ 
	respectively.  
\end{itemize}
We claim the existence of a sequence $\{\psi_l\}$ of 
strict approximate $\mathscr{K}$-isomorphisms form $\mathcal{M}$ to $\mathcal{N}$ 
with the following properties.  
\begin{enumerate}[label=(\alph*)]
\item
	each $\psi_l$ is of the form $(\theta|_{X_{i(l)} \times Y_{j(l)}})
	|^{\mathcal{M} \times \mathcal{N}} + \varepsilon$ for some 
	$\theta \in \operatorname{Apx}_\mathscr{K}(\mathcal{M}, \mathcal{N})$, 
	where $\delta_l \leq 2^{-l}$ and $i(l), j(l) \uparrow \infty$; 
\item
	$\psi_{l+1} \vartriangleleft \psi_l$; and 
\item
	$\psi_{l+1}|_{X_{i(l)} \times \mathcal{N}}$ is $\delta_l$-total if $l$ is even, 
	while $\psi_{l+1}|_{\mathcal{M} \times Y_{i(l)}}$ is $\delta_l$-surjective if $l$ is odd.  
\end{enumerate}
The construction of such a sequence proceeds as following.  Set 
\[
	\psi_0 := (\psi|_{X_0 \times Y_0})|^{|\mathcal{M}| \times |\mathcal{N}|} + \varepsilon.  
\]
Assume $l$ is even and $\psi_l$ is given.  
Then, by assumption on $\mathcal{N}$, 
one can find $\theta \vartriangleleft \psi_l$ such that 
$\theta|_{X_{i(l)} \times \mathcal{N}}$ is $\delta_l/2$-total.  
Since $(\theta|_{X_i \times Y_j})|^{\mathcal{M} \times \mathcal{N}} + \delta$ 
converges to $\theta$ as $i, j \to \infty$ and $\delta \to 0$, 
for sufficiently large $i(l+1) > i(l)$ and $j(l+1) > j(l)$ and 
sufficiently small $\delta_{l+1} < \delta_l/2$, we have 
\[
	\psi_l \vartriangleright 
	(\theta|_{X_{i(l+1)} \times Y_{j(l+1)}})|^{\mathcal{M} \times \mathcal{N}} 
	+ \delta_{l+1}.  
\]
We let $\psi_{l+1}$ be the right-hand side.  
Then it is clear that $\psi_{l+1}|_{X_{i(l)} \times \mathcal{N}}$ is $\delta_l$-total.  
The case $l$ is odd is similar, 
and the description of the construction of $\{\psi_l\}$ is completed.  
Now the sequence being decreasing, there exists the limit 
$\psi_\infty \in \operatorname{Apx}_\mathscr{K}^{\vartriangleleft\varphi}(\mathcal{M}, \mathcal{N})$, 
which is clearly of the form $\varphi_\alpha$ 
for some isomorphism $\alpha \colon \mathcal{M} \to \mathcal{N}$ 
by Proposition~\ref{prop:_characterization_of_epsilon-totality}, as desired.  

(iii) $\Rightarrow$ (i) can be verified by the similar argument as above.  
Also, (i) $\Rightarrow$ (ii) $\Rightarrow$ (iii) is trivial.  

(iii) $\Rightarrow$ (iv).  
It follows from (iii) $\Rightarrow$ (ii) that $\mathcal{M}$ is $\mathscr{K}$-universal.  
Let $\iota, \eta \colon \langle \bar{a} \rangle \to \mathcal{M}$ be 
$\mathscr{K}$-admissible embeddings and $\varepsilon$ be a positive real number.  
Then 
\[
	\varphi := \bigl((\varphi_\eta^{}\varphi_\iota^*)
	|_{\iota(\bar{a}) \times \eta(\bar{a})}\bigr)
	|^{\mathcal{M} \times \mathcal{M}} + \varepsilon
\] 
is in $\operatorname{Stx}_\mathscr{K}(\mathcal{M}, \mathcal{M})$, 
so by what we proved in the first paragraph, 
one can find a $\mathscr{K}$-admissible automorphism $\alpha$ of $\mathcal{M}$ with 
$\varphi_\alpha \vartriangleleft \varphi$.  
Since $\varphi\bigl(\iota(a_i), \eta(a_i)\bigr) = \varepsilon$, 
the inequality $d\bigl(\alpha \circ \iota(a_i), \kern 3pt \eta(a_i)\bigr) \leq \varepsilon$ follows.  

(iv) $\Rightarrow$ (ii).  
Suppose that $\mathcal{A}$ is an object of $\mathscr{K}$ and 
$\varphi$ is a strict approximate $\mathscr{K}$-isomorphism from $\mathcal{A}$ to $\mathcal{M}$.  
By assumption, there exists a $\mathscr{K}$-admissible embedding 
$\iota \colon \mathcal{A} \to \mathcal{M} = \overline{\bigcup_n \mathcal{A}_n}$, 
so it suffices to show that there is a $\mathscr{K}$-admissible automorphism $\alpha$ of 
$\mathcal{M}$ with $\varphi_{\alpha \circ \iota} \vartriangleleft \varphi$, 
or equivalently, $\varphi_\alpha^{} \vartriangleleft \varphi\varphi_\iota^*$.  
To see this, find sufficiently large $n$ and finite partial $\mathscr{K}$-isomorphisms 
$\iota_1, \iota_2$ from $\mathcal{A}_n$ into some object $\mathcal{C} $ of $\mathscr{K}$ 
with 
\[
	(\varphi_{\iota_1, \iota_2})|^{\mathcal{M} \times \mathcal{M}} + \varepsilon 
	\vartriangleleft \varphi\varphi_\iota^*.  
\]
Since there exists a $\mathscr{K}$-admissible embedding of $\mathcal{C}$ into $\mathcal{M}$, 
and since $\mathcal{M}$ is approximately $\mathscr{K}$-ultra-homogeneous, 
there exists a $\mathscr{K}$-admissible embedding $\eta \colon \mathcal{C} \to \mathcal{M}$ 
with $d\bigl(b, \eta \circ \iota_2(b)\bigr) < \varepsilon/2$ for 
$b \in \operatorname{dom} \iota_2$.  
Again by the $\mathscr{K}$-ultra-homogeneity of $\mathcal{M}$, 
one can find a $\mathscr{K}$-admissible automorphism $\alpha$ of $\mathcal{M}$ such that 
the inequality $d\bigl(\alpha(a), \kern 3pt \eta \circ \iota_1(a)\bigr) < \varepsilon/2$ 
holds for all $a \in \operatorname{dom} \iota_1$.  Then 
\[
\begin{aligned}
	d\bigl(\alpha(a), b\bigr) 
	&\leq d\bigl(\alpha(a), \kern 3pt \eta \circ \iota_1(a)\bigr) 
	+ d\bigl(\eta \circ \iota_1(a), \kern 3pt \eta \circ \iota_2(b)\bigr) 
	+ d\bigl(\eta \circ \iota_2(b), \kern 3pt b\bigr) \\
	&\leq d\bigl(\iota_1(a), \iota_2(b)\bigr) + \varepsilon 
	= \varphi_{\iota_1, \iota_2}(a, b) + \varepsilon, 
\end{aligned}
\]
which completes the proof.  
\end{proof}

Next, we shall prove the existence of the Fraïssé limit.  
For this, we need the following lemma which claims that, 
in order to see (iii) in Theorem~\ref{thm:_uniqueness_of_fraisse_limit}, 
we only have to check a countable dense part.  

\begin{lem}\label{lem:_checking_dense_part}
Let $\mathcal{M}$ be a $\mathscr{K}$-structure and $M_0$ be 
a countable dense subset of $|\mathcal{M}|$.  
Suppose that, for each $n \in \mathbb{N}$, 
a countable dense subset $\mathscr{K}_{n,0}$ of $\mathscr{K}_n$ is given.  
Then, in order for $\mathcal{M}$ to be the Fraïssé limit of $\mathscr{K}$, 
it is sufficient that, for any $n \in \mathbb{N}$, 
any $\langle \bar{a} \rangle \in \mathscr{K}_{n, 0}$, 
any finite subset $F \subseteq M_0$ and 
any $\varphi \in \operatorname{Stx}_\mathscr{K}\bigl(\langle \bar{a} \rangle, \mathcal{M}\bigr)$ 
which is rational-valued on $\bar{a} \times F$, there exists 
$\psi \in \operatorname{Apx}_\mathscr{K}^{\leq\varphi}
\bigl(\langle \bar{a} \rangle, \mathcal{M}\bigr)$ such that 
$\psi|_{\bar{a} \times \mathcal{M}}$ is $\varepsilon$-total.  
\end{lem}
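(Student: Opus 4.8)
The plan is to show that condition (iii) of Theorem~\ref{thm:_uniqueness_of_fraisse_limit} follows from the hypothesis of the lemma, which suffices since (iii)~$\Rightarrow$~(i). So fix an arbitrary $\langle \bar{a} \rangle \in \mathscr{K}_n$, an arbitrary $\varphi \in \operatorname{Stx}_\mathscr{K}(\langle \bar{a} \rangle, \mathcal{M})$ and $\varepsilon > 0$; we must produce $\psi \in \operatorname{Stx}_\mathscr{K}^{\vartriangleleft\varphi}(\langle \bar{a} \rangle, \mathcal{M})$ with $\psi|_{\bar{a} \times \mathcal{M}}$ being $\varepsilon$-total. The first step is to reduce the general $\varphi$ to one of the special form addressed by the hypothesis. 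Since $\varphi$ is strict, Proposition~\ref{prop:_characterization_of_strict_domination} gives finite subsets $\bar{a} = A_0 \subseteq |\langle \bar{a} \rangle|$ (we may take $A_0$ to be the generator $\bar a$) and $N_0 \subseteq |\mathcal{M}|$, a rational-valued $\rho \in \operatorname{Apx}(A_0, N_0)$, and a positive real so that $\varphi \vartriangleright \rho|^{\langle \bar a \rangle \times \mathcal M} \vartriangleright$ (a further relaxation dominating our target). Using density of $M_0$ in $|\mathcal M|$ together with Lemma~\ref{lem:_perturbations_of_approximate_isometries}, I would replace $N_0$ by a nearby finite subset $F \subseteq M_0$, paying a controlled constant (a fifth of the slack $\varepsilon$) and keeping the values rational on $\bar a \times F$; this yields a $\varphi_0 := \rho'|^{\langle \bar a \rangle \times \mathcal M} + \varepsilon'$ with $\varphi_0 \vartriangleleft \varphi$, $\varphi_0$ strict, and $\varphi_0$ rational-valued on $\bar a \times F$.

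The second step is to replace $\langle \bar a \rangle$ by a member of the prescribed countable dense set $\mathscr{K}_{n,0}$. Since $\mathscr{K}_{n,0}$ is $d^{\mathscr{K}}$-dense, pick $\langle \bar b \rangle \in \mathscr{K}_{n,0}$ with $d^{\mathscr{K}}(\langle \bar a \rangle, \langle \bar b \rangle)$ small; by the description of $d^{\mathscr{K}}$ via $\operatorname{Stx}_\mathscr{K}$ there is a strict approximate $\mathscr{K}$-isomorphism $\chi$ from $\langle \bar b \rangle$ to $\langle \bar a \rangle$ with $\max_i \chi(b_i, a_i)$ tiny. Composing, $\varphi_0 \chi$ is an approximate $\mathscr{K}$-isomorphism from $\langle \bar b \rangle$ to $\mathcal{M}$ (Proposition~\ref{prop:_compositions_of_approximate_isomorphisms}), and after a further relaxation it is strict and, perturbing once more via Lemma~\ref{lem:_perturbations_of_approximate_isometries} and rationality, can be taken rational-valued on $\bar b \times F$. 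Apply the hypothesis to $\langle \bar b \rangle$, $F$, and this strict approximate isomorphism (with the relevant $\varepsilon$-parameter chosen small enough) to obtain $\psi' \in \operatorname{Apx}_\mathscr{K}^{\leq \varphi_0 \chi}(\langle \bar b \rangle, \mathcal M)$ with $\psi'|_{\bar b \times \mathcal M}$ being $\varepsilon''$-total for a suitably small $\varepsilon''$.

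The third step transports $\psi'$ back to $\langle \bar a \rangle$: form $\psi := \psi'\,\chi^*$ (or more carefully $\psi' \varphi_{\chi}^*$-style composition), which lies in $\operatorname{Apx}_\mathscr{K}(\langle \bar a \rangle, \mathcal M)$ by Proposition~\ref{prop:_compositions_of_approximate_isomorphisms}. One checks $\psi \leq \varphi_0 \vartriangleleft \varphi$, so $\psi \in \operatorname{Stx}_\mathscr{K}^{\vartriangleleft\varphi}(\langle \bar a \rangle, \mathcal M)$ once we relax slightly to land strictly below $\varphi$; and $\psi|_{\bar a \times \mathcal M}$ is $\varepsilon$-total because $\varepsilon$-totality degrades by at most the composition error (which is controlled by $\max_i\chi(b_i,a_i)$ and the slack used, using that $\varphi_{\chi}^*\varphi_{\chi}$ is close to $\varphi_{\operatorname{id}}$ on the generators). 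The bookkeeping for this last estimate — showing $(\psi'\chi^*)^*(\psi'\chi^*) \leq \varphi_{\operatorname{id}_{\langle \bar a\rangle}} + 2\varepsilon$ from $(\psi')^*\psi' \leq \varphi_{\operatorname{id}_{\langle \bar b\rangle}} + 2\varepsilon''$ — is where I expect the main difficulty: one has to be careful that $\varepsilon$-totality, which is a statement only about the domain generators $\bar a$, is exactly the quantity that behaves well under composition with the "almost-identity" $\chi$. The remaining density reduction, namely that it suffices to check (iii) for $\varphi$ rational-valued on $\bar a \times F$ with $F \subseteq M_0$ finite rather than for all strict $\varphi$, is handled entirely by Lemma~\ref{lem:_perturbations_of_approximate_isometries} and Proposition~\ref{prop:_characterization_of_strict_domination} as in the first step, and is routine.
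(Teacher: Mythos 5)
Your proposal is correct and follows essentially the same route as the paper: reduce $\varphi$ via Proposition~\ref{prop:_characterization_of_strict_domination}, Lemma~\ref{lem:_perturbations_of_approximate_isometries} and density of $M_0$ to a (rational-valued) restriction over a finite subset of $M_0$, pass to a nearby $\langle \bar b \rangle \in \mathscr{K}_{n,0}$ by composing with an almost-identity approximate $\mathscr{K}$-isomorphism, apply the hypothesis there, and compose back, with the loss of $\varepsilon$-totality controlled exactly as you indicate by $\max_i \chi(b_i, a_i)$ since totality only concerns the generators. The only detail the paper makes explicit beyond your sketch is the last step of restricting to a sufficiently large finite set $H$ and then trivially extending and relaxing, which simultaneously makes the final $\psi$ strict, keeps it $\varepsilon$-total, and keeps it $\vartriangleleft \varphi$.
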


\begin{proof}
Let $\mathcal{B}$ be an object of $\mathscr{K}$ and 
$\varphi$ be a strict approximate $\mathscr{K}$-isomorphism 
from $\mathcal{B}$ to $\mathcal{M}$, and take 
$\varphi' \in \operatorname{Stx}_\mathscr{K}^{\vartriangleleft\varphi}(\mathcal{B}, \mathcal{M})$.  
Then there exists an arbitrarily large finite subsets $F \in |\mathcal{B}|^n$ 
and $G \subseteq M_0$ and arbitrarily small $\varepsilon > 0$ with 
\[
	\varphi'' := (\varphi'|_{F \times G})|^{\mathcal{B} \times \mathcal{M}} + \varepsilon 
	\vartriangleleft \varphi.  
\]
Without loss of generality, we may assume that $F$ is a generator of $\mathcal{B}$.  
Let $\bar{b} = (b_1, \dots, b_n)$ be an enumeration of $F$.  
Take $\langle \bar{a} \rangle \in \mathscr{K}_{n, 0}$ with 
$d^\mathscr{K}\bigl(\langle \bar{a} \rangle, \langle \bar{b} \rangle\bigr) < \varepsilon/4$ and 
find a joint $\mathscr{K}$-embedding $(\iota, \eta)$ of 
$\langle \bar{a} \rangle$ and $\langle \bar{b} \rangle$ satisfying
$\max_i \varphi_{\iota, \eta}(a_i, b_i) < \varepsilon/4$.  
Then, being a restriction of an extension of a strict approximate $\mathscr{K}$-isomorphism, 
$\varphi''\varphi_{\iota, \eta}$ is strict, so there exists 
$\psi' \in \operatorname{Stx}_\mathscr{K}\bigl(\langle \bar{a} \rangle, \mathcal{M}\bigr)$ 
which is rational-valued on $\bar{a} \times F$ and satisfies 
$\psi' \vartriangleleft \varphi''\varphi_{\iota, \eta}$, 
by Proposition~\ref{prop:_characterization_of_strict_domination}.  
By assumption, we can take $\psi'' \in \operatorname{Apx}_\mathscr{K}^{\leq\psi'}
\bigl(\langle \bar{a} \rangle, \mathcal{M}\bigr)$ such that 
$\psi''|_{\bar{a} \times \mathcal{M}}$ is $\varepsilon/4$-total.  
Then, $(\psi''\varphi_{\eta, \iota})|_{\bar{b} \times \mathcal{M}}$ is 
$\varepsilon/2$-total, and 
\[
	(\psi''\varphi_{\eta, \iota})|_{\bar{b} \times \mathcal{M}}
	\leq (\varphi''\varphi_{\iota, \eta}\varphi_{\eta, \iota})|_{\bar{b} \times \mathcal{M}} 
	\leq \varphi''|_{\bar{b} \times \mathcal{M}} + \varepsilon/2,   
\]
since $\varphi_{\eta, \iota}|_{\bar{b} \times \langle \bar{a} \rangle}$ is 
$\varepsilon/4$-total.  
Now take a finite subset $H \subseteq |\mathcal{M}|$ 
such that $G$ is included in $H$ and 
$(\psi''\varphi_{\eta, \iota})|_{\bar{b} \times H}$ is $3\varepsilon/4$-total, 
and put 
\[
	\psi := \bigl((\psi''\varphi_{\eta, \iota})|_{\bar{b} \times H}\bigr)
	|^{\mathcal{B} \times \mathcal{M}} + \varepsilon/4.  
\]
Then $\psi|_{\bar{b} \times |\mathcal{M}|}$ is $\varepsilon$-total, and 
\[
	\psi 
	\leq (\varphi''|_{\bar{b} \times H})|^{\mathcal{B} \times \mathcal{M}} + 3\varepsilon/4 
	\leq (\varphi'|_{F \times G})|^{\mathcal{B} \times \mathcal{M}} + \varepsilon 
	\vartriangleleft \varphi.  
\]
Since this shows that (iii) in Theorem~\ref{thm:_uniqueness_of_fraisse_limit} holds, 
it follows that $\mathcal{M}$ is the Fraïssé limit of $\mathscr{K}$.  
\end{proof}

\begin{thm}\label{thm:_existence_of_fraisse_limit}
Every Fraïssé category has its limit.  
\end{thm}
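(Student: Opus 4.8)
The plan is to run the classical Fraïssé amalgamation construction, adapted to the present framework: build a single countable chain of objects of $\mathscr{K}$ whose $\mathscr{K}$-structure limit $\mathcal{M}$ absorbs all admissible finite extension data, and then invoke Lemma~\ref{lem:_checking_dense_part} to conclude that $\mathcal{M}$ is a Fraïssé limit. We may assume $\mathscr{K}$ is nonempty, since otherwise there is nothing to prove. Using WPP, I would first fix for each $k$ a countable dense subset $\mathscr{K}_{k,0} \subseteq \mathscr{K}_k$ with respect to $d^\mathscr{K}$; these will also serve as the dense subsets fed into Lemma~\ref{lem:_checking_dense_part}.

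Next I would carry out the construction. Define by recursion on $n$ objects $\mathcal{A}_n$ of $\mathscr{K}$ and $\mathscr{K}$-embeddings $j_n \colon \mathcal{A}_n \to \mathcal{A}_{n+1}$, starting from an arbitrary $\mathcal{A}_1$; having built $\mathcal{A}_n$, fix a countable dense $M_n \subseteq |\mathcal{A}_n|$ with $j_{n-1}(M_{n-1}) \subseteq M_n$. A \emph{task at stage $m$} is a tuple consisting of some $\langle \bar{a} \rangle \in \bigcup_k \mathscr{K}_{k,0}$, a finite $F \subseteq M_m$, a positive rational $\delta$, and a rational-valued separately Kat\v{e}tov map $\rho_0$ on $\bar{a} \times F$; since each $M_m$ and each $\mathscr{K}_{k,0}$ is countable, there are only countably many tasks at each stage, so I would fix an enumeration of the set of all pairs (a stage $m$, a task at stage $m$) in which every pair is listed infinitely often and the $n$-th pair has stage $\leq n$. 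At step $n$ we have $\mathcal{A}_n$ and are handed a task $\tau = (\langle \bar{a} \rangle, F, \delta, \rho_0)$ at some stage $m \leq n$. Using the definition of $\operatorname{Apx}_\mathscr{K}$ together with the density of $\operatorname{Apx}_{2,\mathscr{K}}(\langle \bar{a} \rangle, \mathcal{A}_n)^\uparrow$ (as in the proof of Proposition~\ref{prop:_joint_embedding_and_approximate_isomorphism}) and Proposition~\ref{prop:_characterization_of_strict_domination}, test whether the $\delta$-relaxed trivial extension of $\rho_0$, pushed forward along $\mathcal{A}_m \to \mathcal{A}_n$, lies above some strict $\rho \in \operatorname{Stx}_\mathscr{K}(\langle \bar{a} \rangle, \mathcal{A}_n)$; if it does, Proposition~\ref{prop:_joint_embedding_and_approximate_isomorphism} gives a joint $\mathscr{K}$-embedding $(\iota, \eta)$ of $\langle \bar{a} \rangle$ and $\mathcal{A}_n$ into an object $\mathcal{C}$ of $\mathscr{K}$ with $\varphi_{\iota,\eta} \vartriangleleft \rho$, and I set $\mathcal{A}_{n+1} := \mathcal{C}$, $j_n := \eta$, recording the genuine $\mathscr{K}$-embedding $\iota \colon \langle \bar{a} \rangle \to \mathcal{A}_{n+1}$ and the tuple $\iota(\bar{a}) \subseteq |\mathcal{A}_{n+1}|$; otherwise set $\mathcal{A}_{n+1} := \mathcal{A}_n$, $j_n := \operatorname{id}$. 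Note that once a task is realizable at some step it remains so at all later steps. Finally put $\mathcal{M} := \overline{\bigcup_n \mathcal{A}_n}$, a separable $\mathscr{K}$-structure, and $M_0 := \bigcup_n M_n$, which is countable and dense in $|\mathcal{M}|$.

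It then remains to verify the hypothesis of Lemma~\ref{lem:_checking_dense_part} for these $\mathscr{K}_{k,0}$ and $M_0$. Let $\langle \bar{a} \rangle \in \mathscr{K}_{n,0}$, let $F \subseteq M_0$ be finite (so $F \subseteq M_m$ for some $m$), let $\varepsilon > 0$, and let $\varphi \in \operatorname{Stx}_\mathscr{K}(\langle \bar{a} \rangle, \mathcal{M})$ be rational-valued on $\bar{a} \times F$. Since $\varphi$ is strict, $\operatorname{Apx}_\mathscr{K}^{\vartriangleleft\varphi}(\langle \bar{a} \rangle, \mathcal{M})$ is open and nonempty, hence meets the dense set $\bigcup_{m'} \{ \psi \mid \exists \chi \in \operatorname{Apx}_{2,\mathscr{K}}(\langle \bar{a} \rangle, \mathcal{A}_{m'}),\ \psi \geq \chi|^{\langle \bar{a} \rangle \times \mathcal{M}} \}$; this yields an $m'$ and finite partial $\mathscr{K}$-isomorphisms $\iota_0 \colon \langle \bar{a} \rangle \dashrightarrow \mathcal{C}_0$, $\eta_0 \colon \mathcal{A}_{m'} \dashrightarrow \mathcal{C}_0$ with $\varphi_{\iota_0,\eta_0}|^{\langle \bar{a} \rangle \times \mathcal{M}} + 2\varepsilon_0 \vartriangleleft \varphi$ for some $\varepsilon_0 > 0$. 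Applying Proposition~\ref{prop:_characterization_of_strict_domination} (and Lemma~\ref{lem:_perturbations_of_approximate_isometries} to move the support onto $\bar{a}$ and onto a finite subset of $M_{m'}$, at the cost of shrinking $\varepsilon_0$), I would extract a realizable rational task $\tau$ at stage $m'$ whose associated approximate $\mathscr{K}$-isomorphism still lies $\vartriangleleft \varphi$. The task $\tau$ is handled successfully at some step $n \geq m'$, so the construction produces a genuine $\mathscr{K}$-embedding $\iota \colon \langle \bar{a} \rangle \to \mathcal{A}_{n+1} \subseteq \mathcal{M}$ whose induced approximate isometry is $\vartriangleleft \varphi$ and locates $\bar{a}$ inside $\mathcal{A}_{n+1}$. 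Taking $\psi$ to be a suitable relaxation, by $\leq \varepsilon$, of the approximate isometry induced by $\iota$ read inside $\mathcal{M}$ (restricted to the part of $\mathcal{M}$ over which it is controlled by $\varphi$, then trivially extended), one obtains $\psi \in \operatorname{Apx}_\mathscr{K}^{\leq\varphi}(\langle \bar{a} \rangle, \mathcal{M})$ with $\psi|_{\bar{a} \times \mathcal{M}}$ being $\varepsilon$-total — the point being that $\iota(\bar{a})$ already lies in $\mathcal{M}$, so the relevant infima are attained up to $\varepsilon$. Lemma~\ref{lem:_checking_dense_part} then shows $\mathcal{M}$ is a Fraïssé limit of $\mathscr{K}$, and uniqueness is already contained in Theorem~\ref{thm:_uniqueness_of_fraisse_limit}.

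The conceptual core — ``solve one extension task'' — is a direct application of Proposition~\ref{prop:_joint_embedding_and_approximate_isomorphism} (where NAP enters), and the final appeal to Lemma~\ref{lem:_checking_dense_part} is formal. The hard part will be the bookkeeping and the accompanying passage between the strict approximate $\mathscr{K}$-isomorphisms into $\mathcal{M}$ — which is not an object of $\mathscr{K}$, so Proposition~\ref{prop:_joint_embedding_and_approximate_isomorphism} does not apply to it directly — and the countably many rational tasks living at the finite stages $\mathcal{A}_m$: one must be sure that every strict $\varphi$ into $\mathcal{M}$ is $\vartriangleleft$-dominated by (the trivial extension of) a realizable rational task, that the enumeration meets such a task at a stage large enough to carry the amalgamation, and that the resulting $\psi$ can be taken simultaneously $\varepsilon$-total on $\bar{a} \times \mathcal{M}$ and below $\varphi$. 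These are exactly the points that require the perturbation lemmas of Section~2 and careful use of the relation $\vartriangleleft$.
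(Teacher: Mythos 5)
Your proposal is correct and follows essentially the same route as the paper's own (very terse) proof: fix countable dense $\mathscr{K}_{n,0}$ via WPP, build the chain inductively so that countably many rational-valued ``tasks'' at countable dense subsets of the finite stages are realized by genuine $\mathscr{K}$-embeddings via Proposition~\ref{prop:_joint_embedding_and_approximate_isomorphism}, and conclude with Lemma~\ref{lem:_checking_dense_part}. Your write-up is simply a more explicit bookkeeping of what the paper compresses into two sentences, and the details you flag at the end (transporting tasks along the chain, rationalizing via Proposition~\ref{prop:_characterization_of_strict_domination} and Lemma~\ref{lem:_perturbations_of_approximate_isometries}, and using that an actual embedding into $\mathcal{M}$ gives $\varepsilon$-totality) are exactly the ones the paper leaves implicit.
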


\begin{proof}
Take a countable dense subset $\mathscr{K}_{n, 0} \subseteq \mathscr{K}_n$ for each $n$.  
In view of Proposition~\ref{prop:_joint_embedding_and_approximate_isomorphism}, 
we can inductively find a $\mathscr{K}$-structure $\mathcal{A}_k$, 
a $\mathscr{K}$-embedding $\iota_{k-1} \colon \mathcal{A}_{k-1} \to \mathcal{A}_k$ and 
a countable dense subset $A_{k, 0} \subseteq |\mathcal{A}_k|$ so that, 
if $\langle \bar{a} \rangle$ is in $\mathscr{K}_{n, 0}$, 
if $F$ is a finite subset of $A_{k, 0}$, 
and if $\varphi$ is a strict approximate $\mathscr{K}$-isomorphism 
from $\langle \bar{a} \rangle$ to $\mathcal{A}_k$ which is rational-valued on $\bar{a} \times F$, 
then there exists a $\mathscr{K}$-embedding 
$\iota \colon \langle \bar{a} \rangle \to \mathcal{A}_l$ for some $l > k$ with 
$\varphi_\iota^{} \vartriangleleft \varphi_{\iota_{l, k}}\varphi$, 
where $\iota_{l, k}$ denotes the composition of $\iota_k, \dots, \iota_{l-1}$.  
Then the $\mathscr{K}$-structure obtained from the inductive system 
satisfies the assumption in the previous lemma, 
so we are done.  
\end{proof}

\begin{rem}\label{rem:_the_gap}
Let $\mathscr{K}$ be a Fraïssé category.  
If every $L$-embedding between objects of $\mathscr{K}$ is 
a morphism of $\mathscr{K}$, then $\mathscr{K}$ is 
a Fraïssé class in the sense of~\cite{eagle16:_fraisse_limits}.  
We notice that there is a subtle difference between our result
(Theorems \ref{thm:_uniqueness_of_fraisse_limit} 
and~\ref{thm:_existence_of_fraisse_limit}) 
and~\cite[Theorem~2.8]{eagle16:_fraisse_limits}.  
We proved that if $\mathcal{M}$ is the limit of a Fraïssé class $\mathscr{K}$, 
if $\iota, \eta \colon \mathcal{A} \to \mathcal{M}$ are 
\emph{$\mathscr{K}$-admissible} embedding of an object $\mathcal{A}$ of $\mathscr{K}$, 
and if $F$ is a finite subset of $|\mathcal{A}|$, then for any $\varepsilon > 0$ 
there exists a ($\mathscr{K}$-admissible) automorphism $\alpha$ of $\mathcal{M}$ 
with $d\bigl(\alpha \circ \iota(a), \kern 3pt \eta(a)\bigr) < \varepsilon$ 
for all $a \in F$. 
On the other hand, it is claimed 
in~\cite[Definition~2.6 and Theorem~2.8]{eagle16:_fraisse_limits} 
that even if $\iota$ and $\eta$ are not $\mathscr{K}$-admissible, 
one can still find an automorphism with the same property.  

In order to obtain the result claimed in~\cite[Theorem~2.8]{eagle16:_fraisse_limits}, 
one might modify the definition of approximate $\mathscr{K}$-isomorphisms as 
following.  First, for $\mathscr{K}$-structures $\mathcal{M}$ and $\mathcal{N}$, 
define $\operatorname{Apx}_{\mathscr{K},2}'(\mathcal{M}, \mathcal{N})$ as
the set of all approximate isometries of the form $\varphi_{\iota,\eta}$, 
where $\iota$ and $\eta$ are finite partial $L$-isomorphisms from 
$\mathcal{M}$ and $\subseteq \mathcal{N}$ into some $\mathcal{A} \in \mathscr{K}$ 
such that the structures $\langle \operatorname{dom} \iota \rangle$, 
$\langle \operatorname{ran} \iota \rangle$, $\langle \operatorname{dom} \eta \rangle$ 
and $\langle \operatorname{ran} \eta \rangle$are in $\mathscr{K}$.  
Then define $\operatorname{Apx}_\mathscr{K}'(\mathcal{M}, \mathcal{N})$ as 
the closure of the set of all approximate isometries which dominate some element of 
$\operatorname{Apx}_{\mathscr{K},2}'(\mathcal{M}, \mathcal{N})$.  
If one could prove Propositions~\ref{prop:_joint_embedding_and_approximate_isomorphism} 
and~\ref{prop:_compositions_of_approximate_isomorphisms} with this modified definition, 
then one would be able to obtain the desired result by 
simply copying the proofs in this paper.  

However, with this modified definition, 
the proof of Proposition~\ref{prop:_compositions_of_approximate_isomorphisms} no longer works.  
The problem lies in the second paragraph, where the proof is reduced to 
the case that all the relevant structures are objects of $\mathscr{K}$.  
To see the difficulty, 
let $\mathcal{M}_1$, $\mathcal{M}_2$ and $\mathcal{M}_3$ be $\mathscr{K}$-structures, 
and $\varphi \in \operatorname{Apx}_\mathscr{K}'(\mathcal{M}_1, \mathcal{M}_2)$ and 
$\psi \in \operatorname{Apx}_\mathscr{K}'(\mathcal{M}_2, \mathcal{M}_3)$ be strict.  
Then there exist finite partial $L$-embeddings 
$\iota_i \colon \mathcal{M}_i \dashrightarrow \mathcal{A} \ (i = 1,2)$ and 
$\eta_j \colon \mathcal{M}_j \dashrightarrow \mathcal{B} \ (j = 2,3)$, 
where $\mathcal{A}$ and $\mathcal{B}$ are members of $\mathscr{K}$, 
and a positive real number $\varepsilon$ such that 
\[
\begin{aligned}
	\varphi' := \varphi_{\iota_1,\iota_2}|^{\mathcal{M}_1 \times \mathcal{M}_2} + &\varepsilon 
	\vartriangleleft \varphi, &  
	\psi' := \varphi_{\eta_2,\eta_3}|^{\mathcal{M}_2 \times \mathcal{M}_3} + &\varepsilon 
	\vartriangleleft \psi.    
\end{aligned}
\]
It is true that there exists an $L$-embedding $\iota$ of a member $\mathcal{C}$ of $\mathscr{K}$ 
into $\mathcal{M}_2$ such that the image of $\iota$ \emph{almost} includes 
both $\operatorname{dom} \iota_2$ and $\operatorname{dom} \eta_1$, 
and in order to reduce the proof, 
it is expected to show that the restrictions $\varphi_\iota^*\varphi'$ and 
$\psi'\varphi_\iota^{}$ are approximate isomorphisms in the sense of the modified definition; 
but how? 

Note that with the original definition (i.e.,~Definition~\ref{dfn:_approximate_isomorphisms}), 
this difficulty could be avoided.  
This is because we could take $\iota$ above so that 
the image \emph{genuinely} includes the domains of both $\iota_2$ and $\eta_1$, 
whence the restrictions are trivially approximate isomorphisms.  
\end{rem}


\section{UHF algebras}
In this section, we give an application of our theory to C*-algebras.  
This is a generalization of the results in Section~3 of~\cite{masumoto16:_jiang_su}.  

We shall consider the language of unital tracial C*-algebra.  
The language $L_{\mathrm{TC}^*}$ consists of the following symbols: 
\begin{itemize}
\item
	two constant symbols $0$ and $1$; 
\item
	an unary function symbol $\lambda$ for each $\lambda \in \mathbb{C}$, 
	which are to be interpreted as multiplication by $\lambda$; 
\item
	an unary function symbol $*$ for involution; 
\item
	a binary function symbol $+$ and ${}\cdot{}$; 
\item
	an unary predicate symbol $\operatorname{tr}$.  
\end{itemize}
Then every unital C*-algebra with a distinguished trace can be considered as 
a metric $L_{\mathrm{TC}^*}$-structure.  
Note that the distance we adopt is the norm distance, 
and that a map between unital C*-algebras with fixed traces are 
$L$-embeddings if and only if 
it is a trace-preserving injective $*$-homomorphism.  

For non-negative integer $p$ and positive integer $n$, 
we shall denote by $\mathcal{A}_{p, n}$ the C*-algebra $C([0, 1]^p, \mathbb{M}_n)$ of 
all $n \times n$ matrix-valued functions on $[0, 1]^p$.  
We note that there are canonical isomorphisms 
$C([0, 1]^p, \mathbb{M}_n) \simeq C([0, 1]^p) \otimes \mathbb{M}_n$, 
$C([0, 1]^p) \otimes C([0, 1]^q) \simeq C([0, 1]^{p+q})$ and 
$\mathbb{M}_n \otimes \mathbb{M}_m \simeq \mathbb{M}_{mn}$, 
so that $\mathcal{A}_{p, n} \otimes \mathcal{A}_{q, m}$ is 
canonically isomorphic to $\mathcal{A}_{p+q, mn}$.  
Now, for a probability Radon measure $\mu$ on $[0, 1]^p$, 
we can define a trace $\tau_\mu$ on $\mathcal{A}_{p, n}$ by 
\[
	\tau_\mu(f) := \int \operatorname{tr}\bigl(f(t)\bigr)\, d\mu(t), 
\]
where $\operatorname{tr}$ is the normalized trace on $\mathbb{M}_n$.  
It can be easily verified that 
every trace on $\mathcal{A}_{p, n}$ is of this form, 
so that we can identify the traces on $\mathcal{A}_{p,n}$ with 
the probability Radon measures on $[0, 1]^p$.  
Since the group $\operatorname{Homeo}([0, 1]^p)$ of homeomorphisms of 
$[0, 1]^p$ canonically acts on the set of probability Radon measures on $[0, 1]^p$, 
it also acts on the traces of $\mathcal{A}_{p, n}$.  
In the sequel, we only consider the traces which are 
in the $\operatorname{Homeo}([0, 1]^p)$-orbit of $\tau_\lambda$, 
where $\lambda$ is the Lebesgue measure.  
Note that such traces are faithful.  

By definition, a \emph{supernatural number} is a formal product 
\[
	\nu = \prod_{p\text{: prime}} p^{n_p}, 
\]
where $n_p$ is either a non-negative integer or $\infty$ for each $p$ such that 
$\sum_p n_p = \infty$.  
Given a supernatural number $\nu$, we shall define a category $\mathscr{K}_\nu$ as 
following.  Let $\mathbb{N}_\nu$ be the set of all natural numbers which formally divides $\nu$.  
\begin{itemize}
\item
	$\operatorname{Obj}(\mathscr{K}_\nu)$ is the class of 
	all the pairs of the form $\langle \mathcal{A}_{p, n}, \tau \rangle$, 
	where $n$ is in $\mathbb{N}_\nu$ and $\tau$ is in the 
	$\operatorname{Homeo}([0, 1]^p)$-orbit of $\tau_\lambda$.  
\item
	$\operatorname{Mor}_\mathscr{K}\bigl(\langle \mathcal{A}_{p, n}, \tau \rangle, 
	\langle \mathcal{A}_{p', n'}, \tau' \rangle\bigr)$ is the set of 
	all (unital trace-preserving injective) diagonalizable $*$-homomorphisms 
	from $\langle \mathcal{A}_{p, n}, \tau \rangle$ to 
	$\langle \mathcal{A}_{p', n'}, \tau' \rangle$.  
\end{itemize}
Here, a $*$-homomorphism $\iota$ from $\mathcal{A}_{p, n}$ to $\mathcal{A}_{p',n'}$ is 
said to be \emph{diagonalizable} if there exist a unitary $v \in \mathcal{A}_{p',n'}$ and 
continuous functions $t_1, \dots, t_k \colon [0, 1]^{p'} \to [0, 1]^p$ such that 
\[
	\iota\bigl(f\bigr)(s) 
	= \operatorname{Ad}(v_s) \Bigl( \operatorname{diag}
	\bigl[f(t_1(s)), \dots, f(t_k(s))\bigr] \Bigr)
\]
for $f \in \mathcal{Z}_{p,q}$ and $s \in [0,1]^{p'}$, 
where $\operatorname{Ad}(v)$ denotes 
the inner automorphism of $\mathcal{A}_{p',n'}$ associated to $v$, 
and $\operatorname{diag}[a_1,\dots,a_n]$ is the block diagonal matrix with $a_i$ as 
its $i$-th block.  
Note that compositions of diagonalizable $*$-homomorphisms are diagonalizable.  

\begin{rem}\label{rem:_counterexample}
Here, we shall give an example of $L_{\mathrm{TC}^*}$-morphism 
between objects of $\mathscr{K}_\nu$
which cannot be approximated by diagonalizable ones 
with respect to point-norm topology.  
We assume that $2$ is in $\mathbb{N}_\nu$ and use 
$\mathbb{D} := \{z \in \mathbb{C} \mid |z| \leq 1\}$ instead of $[0, 1]^2$.  
Define $t_1, t_2 \colon \mathbb{D} \to \mathbb{D}$ by 
\[
\begin{aligned}
	t_1(re^{i\theta}) &:= re^{i\theta/2}, & 
	t_2(re^{i\theta}) &:= -re^{i\theta/2} & 
	\bigl(r \in [0, 1], \ \theta \in [0, 2\pi)\bigr).  
\end{aligned}
\]
Also, let $u \colon \mathbb{D} \to \mathbb{M}_2$ be a unitary-valued function defined by 
\[
	u(re^{i\theta}) := 
	\left\{
	\begin{array}{ll}
		\begin{pmatrix}
			e^{i\theta/4}\cos \frac{\theta}{4} & \sin \frac{\theta}{4} \\
			-e^{i\theta/4}\sin \frac{\theta}{4} & \cos \frac{\theta}{4}
		\end{pmatrix}
		& \bigl(r \neq 0, \ \theta \in [0, \pi)\bigr) \\
		1_{\mathbb{M}_2} & (r = 0), 
	\end{array}
	\right.
\]
and note that $u(r) = 1_{\mathbb{M}_2}$ while 
$\lim_{\theta \to 2\pi-0} u(re^{i\theta}) 
= \left( \begin{smallmatrix} 0 & 1 \\ 1 & 0 \end{smallmatrix} \right)$ if $r \neq 0$.  
Now, given $C(\mathbb{D})$, consider a matrix-valued function 
\[
	\varphi(f) \colon z \mapsto 
	\operatorname{Ad}(u(z)) \bigl(\operatorname{diag}[f(t_1(z)), f(t_2(z))]\bigr).  
\]
Clearly this is continuous on the complement of the non-negative part of the real axis.  
It is also continuous on the positive part of the real axis, 
as the switch of the eigenvalues is offset by the unitary.  
Finally, it is continuous at the origin, because it converges to the scalar matrix 
$f(0)1_{\mathbb{M}_2}$.  Therefore, this matrix-valued function belongs to 
$C(\mathbb{D}, \mathbb{M}_2)$, so that $\varphi$ defines 
a $*$-homomorphism from $C(\mathbb{D}) $ into $C(\mathbb{D}, \mathbb{M}_2)$.  
We can also easily verify that this is unital, injective, 
and trace-preserving with respect to $\tau_\mu$, where 
$\mu$ is the normalized Lebesgue measure on $\mathbb{D}$.  

We shall show that the map $\varphi$ not approximately diagonalizable.  
Indeed, if $\varphi$ is approximately diagonalizable, 
then there is a $*$-homomorphism 
$\psi \colon C(\mathbb{D}) \to C(\mathbb{D}, \mathbb{M}_2)$ of the form 
\[
	\psi(f) = \operatorname{Ad}(v)\bigl(\operatorname{diag}[f \circ t_1', f \circ t_2']\bigr)
\]
for some continuous maps $t_1', t_2' \colon \mathbb{D} \to \mathbb{D}$ and 
a continuous unitary-valued map $\nu$ which satisfies 
$\|\varphi(\operatorname{id}_\mathbb{D}) - \psi(\operatorname{id}_\mathbb{D})\| < 1/2$.  
It follows that the Hausdorff distance between $\{t_1(z), t_2(z)\}$ and 
$\{t_1'(z), t_2'(z)\}$ is less than $1/2$ for all 
$z$ with $|z| = 1$, but this is impossible.  
We note that this is also a counterexample 
of~\cite[Theorem~6.3]{lin09:_approximately_diagonalizing} 
claiming that any unital $*$-homomorphism from 
$C(X)$ to $C(Y, \mathbb{M}_n)$ is approximately diagonalizable 
if $X$ is a compact metric space which is a locally absolute retract and 
$Y$ is a compact metric space with $\operatorname{dim} Y \leq 2$.  
\end{rem}

\begin{lem}\label{lem:_embeddability}
\begin{enumerate}[label=\textup{(\roman*)}]
\item
	For any object $\langle \mathcal{A}_{p,n}, \tau \rangle$ of $\mathscr{K}_\nu$, 
	there exists a $\mathscr{K}_\nu$-isomorphism 
	from $\langle \mathcal{A}_{p,n}, \tau \rangle$ onto 
	$\langle \mathcal{A}_{p,n}, \tau_\lambda \rangle$.  
\item
	For any $p$, there exists a 
	$\mathscr{K}_\nu$-embedding from $\langle \mathcal{A}_{p,n}, \tau \rangle$ 
	into $\langle \mathcal{A}_{1,n}, \tau_\lambda \rangle$.  
\end{enumerate}
\end{lem}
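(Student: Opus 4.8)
The plan is to derive both parts from one elementary remark: if $\gamma \colon [0,1]^{p'} \to [0,1]^p$ is continuous, then $f \mapsto f \circ \gamma$ is a unital $*$-homomorphism from $\mathcal{A}_{p,n}$ to $\mathcal{A}_{p',n}$, and it is diagonalizable (take $k = 1$ and the trivial unitary in the definition of diagonalizability); moreover it is injective exactly when $\gamma$ is onto, and, since the normalized matrix trace is conjugation-invariant, the change-of-variables formula shows it is trace-preserving from $\langle \mathcal{A}_{p,n}, \tau_\mu \rangle$ to $\langle \mathcal{A}_{p',n}, \tau_{\mu'} \rangle$ exactly when $\gamma_*\mu' = \mu$.

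\emph{Part (i).} Write $\tau = \tau_\mu$ with $\mu$ in the $\operatorname{Homeo}([0,1]^p)$-orbit of the Lebesgue measure $\lambda$, and fix a homeomorphism $\sigma$ of $[0,1]^p$ with $\sigma_*\lambda = \mu$. By the remark, $\Phi \colon f \mapsto f \circ \sigma$ is a $\mathscr{K}_\nu$-morphism from $\langle \mathcal{A}_{p,n}, \tau_\mu \rangle$ to $\langle \mathcal{A}_{p,n}, \tau_\lambda \rangle$, while $f \mapsto f \circ \sigma^{-1}$ is a $\mathscr{K}_\nu$-morphism in the reverse direction inverse to it, so $\Phi$ is the desired $\mathscr{K}_\nu$-isomorphism.

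\emph{Part (ii).} By (i) it suffices to produce a $\mathscr{K}_\nu$-embedding $\langle \mathcal{A}_{p,n}, \tau_\lambda \rangle \to \langle \mathcal{A}_{1,n}, \tau_\lambda \rangle$. Since the two matrix algebras have the same size, the block-diagonal matrix $\operatorname{diag}[f(t_1(s)),\dots,f(t_k(s))]$ (of size $kn$) must have size $n$, so any diagonalizable $*$-homomorphism between these algebras has $k = 1$ and is therefore of the form $f \mapsto \operatorname{Ad}(v)(f \circ \gamma)$ with $\gamma \colon [0,1] \to [0,1]^p$ continuous; such a map is injective only when $\gamma$ is surjective, that is, only when $\gamma$ is a space-filling curve. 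So, by the remark with $v = 1$, the task reduces to exhibiting a continuous surjection $\gamma \colon [0,1] \to [0,1]^p$ which is \emph{measure-preserving}, $\gamma_*\lambda = \lambda$; then $f \mapsto f \circ \gamma$ is the required embedding.

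\emph{The main obstacle} is the construction of such a $\gamma$: a space-filling curve that simultaneously carries Lebesgue measure to Lebesgue measure. For $p = 2$ I would use the classical Hilbert curve $H \colon [0,1] \to [0,1]^2$ together with the classical fact that $H_*\lambda = \lambda$, whose proof I would recall briefly — $H$ maps the $j$-th dyadic subinterval of length $4^{-k}$ onto the $j$-th dyadic subsquare of side $2^{-k}$, so $\lambda(H^{-1}(Q)) \ge \lambda(Q)$ for every dyadic square $Q$, while for each fixed $k$ the union of those level-$k'$ subintervals whose images meet the level-$k$ grid has total length tending to $0$ as $k' \to \infty$, whence $H^{-1}$ of that grid is Lebesgue-null; thus $H_*\lambda$ and $\lambda$ agree on the $\pi$-system of half-open dyadic squares and therefore coincide. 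For general $p$ I would then set $\gamma_1 := \operatorname{id}_{[0,1]}$, $\gamma_2 := H$ and $\gamma_{p+1} := (H \times \operatorname{id}_{[0,1]^{p-1}}) \circ \gamma_p$; each factor is a measure-preserving continuous surjection and Lebesgue measure on a cube is the product of Lebesgue measures on its factors, so $\gamma_p \colon [0,1] \to [0,1]^p$ is again a measure-preserving space-filling curve, which completes the proof.
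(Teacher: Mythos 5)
Your proof is correct and follows essentially the same route as the paper: part (i) by pulling back along the homeomorphism carrying $\tau_\lambda$ to $\tau$, and part (ii) by reducing to $\tau=\tau_\lambda$ and composing with the Hilbert curve, whose dyadic interval-to-square property gives $H_*\lambda=\lambda$ (you spell out this measure-preservation and the iteration $H\times\operatorname{id}$ for general $p$, which the paper leaves implicit). No changes needed.
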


\begin{proof}
(i) Let $\alpha$ be the homeomorphism of $[0, 1]^p$ with
$\alpha_*(\tau_\lambda) = \tau$.  
Then the induced $*$-homomorphism $\alpha^* \colon f \mapsto f \circ \alpha$ is the desired one.  

(ii) We may assume $\tau = \tau_\lambda$ by (i).  
It suffices to show the case $p = 2$.  
Let $\beta \colon [0, 1] \to [0, 1]^2$ be the Hilbert curve~\cite{hilbert91:_stetige_abbildung}, 
which is a surjective continuous map such that any interval of the form 
$[k/4^l, k+1/4^l]$ is sent to a square of the form 
$[k_1/2^l, k_1+1/2^l] \times [k_2/2^l, k_2+1/2^l]$, 
so that $\beta_*(\tau_\lambda) = \tau_\lambda$.  
Then the $*$-homomorphism $\beta^* \colon f \mapsto f \circ \beta$ is the desired one.  
\end{proof}

\begin{lem}\label{lem:_lemma_for_nap}
Suppose that $\iota_1, \iota_2 \colon \langle \mathcal{A}_{p,n}, \tau \rangle \to 
\langle \mathcal{A}_{p',n'}, \tau \rangle$ are 
$\mathscr{K}$-embeddings of the form 
\[
	\iota_i(f) = \operatorname{diag}[f \circ t_{1, i}, \dots, f \circ t_{k, i}].  
\]
If the diameter of the range of $t_{l, i}$ is less than $\delta$ for all $i$ and $l$, 
then there exists a permutation $\sigma \in \mathfrak{S}_k$ such that 
the inequality $\|t_{l, 1} - t_{\sigma(l), 2}\| < 2\delta$ holds for all $l$.  
\end{lem}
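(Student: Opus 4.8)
The plan is to reduce the statement to a finite matching problem: I will produce the permutation $\sigma$ by matching those eigenvalue maps $t_{l,1}$ and $t_{m,2}$ whose ranges overlap, using Hall's marriage theorem, and the trace condition will be exactly what makes Hall's inequality hold. Once the matching is found, the smallness of the ranges turns an overlap into the norm estimate $\|t_{l,1}-t_{\sigma(l),2}\|<2\delta$ by a triangle inequality.

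First I will extract the combinatorial content of trace preservation. Recalling that traces on $\mathcal{A}_{p,n}$ and $\mathcal{A}_{p',n'}$ correspond to probability Radon measures on $[0,1]^p$ and $[0,1]^{p'}$, let $\mu$ and $\nu$ be the measures underlying the distinguished traces of $\langle\mathcal{A}_{p,n},\tau\rangle$ and $\langle\mathcal{A}_{p',n'},\tau\rangle$. Applying $\iota_i$ to functions of the form $g\cdot 1_n$ with $g\in C([0,1]^p)$ and using that the normalized trace of $\operatorname{diag}[g(t_{1,i}(s))1_n,\dots,g(t_{k,i}(s))1_n]$ equals $\tfrac1k\sum_l g(t_{l,i}(s))$, the equality $\tau_\nu\circ\iota_i=\tau_\mu$ on these elements yields
\[
	\mu=\frac1k\sum_{l=1}^k (t_{l,i})_*\nu \qquad(i=1,2).
\]
Writing $A_l:=\operatorname{ran}(t_{l,1})$ and $B_m:=\operatorname{ran}(t_{m,2})$, each probability measure $(t_{l,1})_*\nu$ is then carried by the closed set $A_l$, and each $(t_{m,2})_*\nu$ by $B_m$.

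Next I will form the bipartite graph on two copies of $\{1,\dots,k\}$ with an edge between $l$ and $m$ exactly when $A_l\cap B_m\neq\emptyset$, and verify Hall's condition on the first copy. Given $S\subseteq\{1,\dots,k\}$ with neighbourhood $T$, the sets $P:=\bigcup_{l\in S}A_l$ and $Q:=\bigcup_{m\notin T}B_m$ are disjoint closed sets, since by definition of $T$ no $A_l$ with $l\in S$ meets any $B_m$ with $m\notin T$. From the displayed identity, $\mu(P)\geq|S|/k$ because each $(t_{l,1})_*\nu$ with $l\in S$ puts all its mass in $A_l\subseteq P$, and likewise $\mu(Q)\geq(k-|T|)/k$; disjointness gives $\mu(P)+\mu(Q)\leq\mu([0,1]^p)=1$, hence $|S|\leq|T|$. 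By Hall's marriage theorem there is a permutation $\sigma\in\mathfrak{S}_k$ with $A_l\cap B_{\sigma(l)}\neq\emptyset$ for all $l$.

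Finally I will use the diameter hypothesis: fixing $x_l\in A_l\cap B_{\sigma(l)}$, for every $s\in[0,1]^{p'}$ the points $t_{l,1}(s)$ and $x_l$ lie in $A_l$ and the points $x_l$ and $t_{\sigma(l),2}(s)$ lie in $B_{\sigma(l)}$, so $|t_{l,1}(s)-t_{\sigma(l),2}(s)|\leq\operatorname{diam}A_l+\operatorname{diam}B_{\sigma(l)}<2\delta$; taking the supremum over $s$ gives $\|t_{l,1}-t_{\sigma(l),2}\|<2\delta$. The only step that requires genuine thought is the verification of Hall's condition via the two-disjoint-sets measure count; the rest is bookkeeping. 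It is worth noting that this argument uses only that the traces are honest probability measures preserved by the $\iota_i$, and not the injectivity of the $\iota_i$ or the $\operatorname{Homeo}([0,1]^p)$-orbit assumption on the traces.
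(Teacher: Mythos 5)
Your proof is correct and takes essentially the same route as the paper: trace preservation packaged as the measure identity $\mu=\tfrac1k\sum_l (t_{l,i})_*\nu$, Hall's marriage theorem applied to the range-intersection bipartite graph, and then the diameter hypothesis plus the triangle inequality to get $\|t_{l,1}-t_{\sigma(l),2}\|<2\delta$. If anything, your verification of Hall's condition via the disjoint sets $\bigcup_{l\in S}A_l$ and $\bigcup_{m\notin T}B_m$ is spelled out more carefully than the paper's one-line measure count.
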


\begin{proof}
For each $l$, let $S_l$ be the set of all $l'$ with 
$\operatorname{Im} t_{l, 1} \cap \operatorname{Im} t_{l', 2} \neq \varnothing$.  
Then, for any $F \subseteq \{1, \dots, k\}$, we have 
\[
	\Bigl| \bigcup_{l \in F} S_l \Bigr| 
	= k\sum_{l' \in \bigcup_{l \in F} S_l} \tau\bigl(\operatorname{Im} t_{l', 2}\bigr) 
	\geq k\tau\Bigl(\bigcup_{l \in F} \operatorname{Im} t_{l, 1}\Bigr) 
	\geq |F|, 
\]
since $\iota_1$ and $\iota_2$ are trace-preserving.  
By Hall's marriage theorem there exists a permutation $\sigma \in \mathfrak{S}_k$ 
with $t_{\sigma(l),2} \in S_l$ for all $l$.  
Now the inequality $\|t_{l,1} - t_{\sigma(l), 2}\| < 2\delta$ is clear.  
\end{proof}

\begin{thm}\label{thm:_knu_is_a_fraisse_category}
The category $\mathscr{K}_\nu$ is a Fraïssé category.  
\end{thm}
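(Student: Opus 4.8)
The plan is to verify the four axioms that define a Fraïssé category (Definition~\ref{dfn:_fraisse_category}): JEP, NAP, WPP and CCP. (The objects, being of the form $C([0,1]^p,\mathbb{M}_n)$ equipped with a trace symbol, are evidently finitely generated separable $L_{\mathrm{TC}^*}$-structures.) Of these, JEP and WPP follow quickly from Lemma~\ref{lem:_embeddability}. For JEP, given $\langle\mathcal{A}_{p_1,n_1},\tau_1\rangle$ and $\langle\mathcal{A}_{p_2,n_2},\tau_2\rangle$, I would use Lemma~\ref{lem:_embeddability}(i) to reduce to the case $\tau_i=\tau_\lambda$, then Lemma~\ref{lem:_embeddability}(ii) to embed each object into $\langle\mathcal{A}_{1,n_i},\tau_\lambda\rangle$, and finally compose with the amplification $g\mapsto\operatorname{diag}[g,\dots,g]\colon\langle\mathcal{A}_{1,n_i},\tau_\lambda\rangle\to\langle\mathcal{A}_{1,N},\tau_\lambda\rangle$ using $N/n_i$ copies, where $N\in\mathbb{N}_\nu$ is a common multiple of $n_1$ and $n_2$; this amplification is a unital trace-preserving diagonalizable $*$-homomorphism, so the resulting pair is a joint $\mathscr{K}_\nu$-embedding. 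For WPP, by Lemma~\ref{lem:_embeddability}(i) every object is $\mathscr{K}_\nu$-isomorphic to one of the form $\langle\mathcal{A}_{p,n_0},\tau_\lambda\rangle$ with $p\ge 0$ and $n_0\in\mathbb{N}_\nu$, and transporting an ordered generator along such an isomorphism does not change its class in $\mathscr{K}_{\nu,n}$ modulo $d^\mathscr{K}$-distance zero; hence $\mathscr{K}_{\nu,n}$ is, up to $d^\mathscr{K}$-distance zero, the union over the countably many pairs $(p,n_0)$ of the sets of ordered $n$-generators of $\langle\mathcal{A}_{p,n_0},\tau_\lambda\rangle$. Since for two generators $\bar a,\bar b$ of the \emph{same} object the pair $(\operatorname{id},\operatorname{id})$ is a joint $\mathscr{K}_\nu$-embedding, the canonical map from each such set of generators (a separable subspace of $C([0,1]^p,\mathbb{M}_{n_0})^n$ with the norm metric) into $(\mathscr{K}_{\nu,n},d^\mathscr{K})$ is $1$-Lipschitz, so $\mathscr{K}_{\nu,n}$ is separable.

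CCP is also routine: every $\mathscr{K}_\nu$-morphism is an isometric, unital, trace-preserving $*$-homomorphism, and a trace is a state, so the map on tuples attached to each symbol of $L_{\mathrm{TC}^*}$ is Lipschitz for $d^\mathscr{K}$ (locally Lipschitz in the case of the product symbol, using that operator norms are eventually constant along a $d^\mathscr{K}$-Cauchy sequence), and therefore sends Cauchy sequences to Cauchy sequences.

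The substance is NAP, and the plan is to reduce to the situation covered by Lemma~\ref{lem:_lemma_for_nap}. Given $\mathscr{K}_\nu$-embeddings $\iota_i\colon\mathcal{A}\to\mathcal{B}_i$ ($i=1,2$), a finite set $F\subseteq|\mathcal{A}|$ and $\varepsilon>0$, I would carry out a chain of reductions, each replacing the data by one a solution of which pulls back to a solution of the original: by Lemma~\ref{lem:_embeddability}(i), assume $\mathcal{A}=\langle\mathcal{A}_{p,n},\tau_\lambda\rangle$ and $\mathcal{B}_i=\langle\mathcal{A}_{p_i,n_i},\tau_\lambda\rangle$; conjugating $\iota_i$ by the inner (hence $\mathscr{K}_\nu$-isomorphic) automorphism implementing the unitary that occurs in its diagonalization, assume $\iota_i$ is purely diagonal, $\iota_i(f)(s)=\operatorname{diag}[f(t^i_1(s)),\dots,f(t^i_{k_i}(s))]$ with $k_i=n_i/n$; composing with the embeddings of Lemma~\ref{lem:_embeddability}(ii), assume $p_1=p_2=1$; and finally, composing $\iota_i$ with the subdivision embedding $g\mapsto\operatorname{diag}[g\circ\phi^i_1,\dots,g\circ\phi^i_{m_i}]\colon\langle\mathcal{A}_{1,n_i},\tau_\lambda\rangle\to\langle\mathcal{A}_{1,N},\tau_\lambda\rangle$, where $\phi^i_l$ maps $[0,1]$ affinely onto $[(l-1)/m_i,l/m_i]$, $m_i=N/n_i$, and $N\in\mathbb{N}_\nu$ is a common multiple of $n_1$ and $n_2$ chosen so large that every range $t^i_j\bigl([(l-1)/m_i,l/m_i]\bigr)$ has diameter below a prescribed $\delta$. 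The latter is possible because the finitely many $t^i_j$ are uniformly continuous on the compact $[0,1]$ and, $\nu$ being infinite, $\mathbb{N}_\nu$ contains arbitrarily large multiples of $\operatorname{lcm}(n_1,n_2)$. After these reductions $\iota_1$ and $\iota_2$ are purely diagonal $\mathscr{K}_\nu$-embeddings of $\langle\mathcal{A}_{p,n},\tau_\lambda\rangle$ into the \emph{same} object $\langle\mathcal{A}_{1,N},\tau_\lambda\rangle$, with the same number $K=N/n$ of summands and all spectral pieces of diameter less than $\delta$.

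At this point Lemma~\ref{lem:_lemma_for_nap} produces a permutation $\sigma\in\mathfrak{S}_K$ with $\|t^1_m-t^2_{\sigma(m)}\|<2\delta$ for all $m$. Taking $w\in\mathbb{M}_N$ to be the constant block-permutation unitary realizing $\sigma$, the automorphism $\operatorname{Ad}(w)$ of $\langle\mathcal{A}_{1,N},\tau_\lambda\rangle$ is a $\mathscr{K}_\nu$-isomorphism and $\operatorname{Ad}(w)\circ\iota_2$ is the purely diagonal embedding whose $m$-th summand is $f\mapsto f\circ t^2_{\sigma(m)}$; so, having chosen $\delta$ at the outset small enough that $\|x-y\|\le 2\delta$ forces $\|f(x)-f(y)\|<\varepsilon$ for all $f\in F$, we obtain $\|\iota_1(f)-\operatorname{Ad}(w)\circ\iota_2(f)\|<\varepsilon$ for every $f\in F$, whence $(\operatorname{id},\operatorname{Ad}(w))$ is a joint $\mathscr{K}_\nu$-embedding witnessing NAP for the reduced data. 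Unwinding the reductions then yields NAP in general. I expect this reduction to be the main obstacle: one must simultaneously shrink the spectral pieces (the subdivision) and force a common target matrix algebra of exactly the right size (so that the two diagonal embeddings genuinely have the same summand count $K$), which is precisely what puts one in a position to apply Lemma~\ref{lem:_lemma_for_nap}; the remaining work is bookkeeping about compositions of diagonalizable $*$-homomorphisms.
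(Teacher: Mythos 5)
Your proposal is correct and follows essentially the same route as the paper: JEP and WPP from Lemma~\ref{lem:_embeddability} plus amplification, CCP from Lipschitz estimates, and NAP by reducing (via the Hilbert-curve embedding, unitary conjugation and an affine subdivision embedding into a common $\langle\mathcal{A}_{1,N},\tau_\lambda\rangle$ with small spectral pieces) to Lemma~\ref{lem:_lemma_for_nap} and then correcting by a (permutation) unitary. The only differences are organizational — the paper applies JEP first and subdivides once, carrying the unitaries $v_i$ along and conjugating by $v_1v_2^*$ at the end, whereas you normalize each embedding separately before subdividing — which does not change the substance of the argument.
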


\begin{proof}
JEP is a direct consequence of Lemma~\ref{lem:_embeddability} and the fact that 
if $n$ divides $n'$, then there exists a $\mathscr{K}_\nu$-embedding 
from $\langle \mathcal{A}_{1,n}, \tau_\lambda \rangle$ to 
$\langle \mathcal{A}_{1,n'}, \tau_\lambda \rangle$ defined by 
$f \mapsto \operatorname{diag}[f, \dots, f]$.  
For NAP, let $\iota_i$ be $\mathscr{K}_\nu$-embeddings from 
$\langle \mathcal{A}_{p_0,n_0}, \tau_0 \rangle$ into 
$\langle \mathcal{A}_{p_i,n_i}, \tau_i \rangle$ 
for $i = 1, 2$, and suppose that a finite subset $F$ of $\mathcal{A}_{p_0, n_0}$ and 
a positive real number $\varepsilon > 0$ are given.  
Our goal is to find $\mathscr{K}_\nu$-embeddings 
$\eta_i$ from $\langle \mathcal{A}_{p_i,n_i}, \tau_i \rangle$ into 
some object $\langle \mathcal{A}_{p_3,n_3}, \tau_3 \rangle$ such that the inequality 
$d\bigl(\eta_1 \circ \iota_1(f), \kern 3pt \eta_2 \circ \iota_2(f)\bigr) < \varepsilon$ 
holds for all $f \in F$.  
To see this, take $\delta > 0$ so that $|t-t'| < \delta$ implies 
$\|f(t) - f(t')\| < \varepsilon$.  
Apply JEP to find $\mathscr{K}_\nu$-embeddings $\eta_i'$ 
from $\langle \mathcal{A}_{p_i,n_i}, \tau_i \rangle$ into 
some object $\langle \mathcal{A}_{p',n'}, \tau' \rangle$.  
By Proposition~\ref{lem:_embeddability}, we may assume without loss of generality 
that $\tau' = \tau_\lambda$ and $p' = 1$.  
Now, Since $\eta_i' \circ \iota_i$ is a $\mathscr{K}_\nu$-isomorphism, it is of the form 
\[
	\eta_i' \circ \iota_i(f) = 
	\operatorname{Ad}(v_i') \bigl(\operatorname{diag}
	[f \circ t_{1,i}', \dots, f \circ t_{k',i}']\bigr).  
\]
Take sufficiently large natural number $m$ such that $n'm$ is in $\mathbb{N}_\nu$ and 
$|s-s'| < 1/m$ implies $|t_{l,i}'(s) - t_{l,i}'(s')| < \delta/2$ for all $l$ and $i$.  
Define $r_c \colon [0, 1] \to [0, 1]$ by 
$r_c(x) := (x+c-1)/m$ for $c = 1, \dots, m$, and 
let $\rho$ be a $\mathscr{K}_\nu$-embedding from 
$\langle \mathcal{A}_{1,n'}, \tau_\lambda \rangle$ into 
$\langle \mathcal{A}_{1,n'm}, \tau_\lambda \rangle$ of the form
\[
	\rho(f) = \operatorname{diag}[f \circ r_1, \dots, f \circ r_m].  
\]
Then $\rho \circ \eta_i' \circ \iota_i$ is of the form 
\[
	\rho \circ \eta_i' \iota_i(f) = 
	\operatorname{Ad}(v_i) \bigl(\operatorname{diag}
	[f \circ t_{1,i}, \dots, f \circ t_{k,i}]\bigr), 
\]
where the diameter of the image of $t_{l,i}$ is less than $\delta/2$ for all $l$ and $i$.  
By Lemma~\ref{lem:_lemma_for_nap}, 
we may assume without loss of generality that the inequality 
$\|t_{l,1} - t_{l,2}\| < \delta$ holds for all $l$.  
It can be easily verified that $\eta_1 := \rho \circ \eta_1'$ and 
$\eta_2 \colon \operatorname{Ad}(v_1v_2^*) \circ \rho \circ \eta_2'$ are 
the desired $\mathscr{K}_\nu$-embeddings.  

WPP is clear, because up to $\mathscr{K}_\nu$-isomorphisms, 
there are only countable many objects in $\mathscr{K}_\nu$.  
Also, CCP automatically follows from the fact that all the relevant functions are 
$1$-Lipschitz on the unit ball.  
\end{proof}

We shall find a concrete description of the limit of $\mathscr{K}_\nu$.  
For this, the following proposition is useful.  

\begin{prop}\label{prop:_characterization_of_the_limit}
Let $\mathscr{K}$ be a Fraïssé class and 
$\mathcal{M} = \overline{\bigcup_n \mathcal{A}_n}$ be a $\mathscr{K}$-structure.  
Denote by $\iota_{k,j}$ the canonical $\mathscr{K}$-embedding 
from $\mathcal{A}_j$ into $\mathcal{A}_k$.  
Suppose that the following two conditions hold: 
\begin{enumerate}[label=\textup{(\alph*)}]
\item
	Any object $\mathcal{C}$ of $\mathscr{K}$ is 
	$\mathscr{K}$-embeddable into $\mathcal{A}_n$ for some $n$.  
\item
	Given a finite subset $F \subseteq |\mathcal{A}_i|$, 
	a positive real number $\varepsilon$ and a $\mathscr{K}$-embedding 
	$\eta \colon \mathcal{A}_i \to \mathcal{A}_j$ for some $j > i$, 
	one can find $k > j$ and a $\mathscr{K}$-automorphism 
	$\alpha \in \mathcal{A}_k$ such that the inequality 
	$d\bigl(\alpha \circ \iota_{k,j} \circ \eta(a), \iota_{k,i}(a)\bigr) < \varepsilon$ 
	holds for all $a \in F$.  
\end{enumerate}
Then $\mathcal{M} = \overline{\bigcup_n \mathcal{A}_n}$ is the Fraïssé limit of $\mathscr{K}$.  
\end{prop}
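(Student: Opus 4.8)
The plan is to verify the hypothesis of Lemma~\ref{lem:_checking_dense_part}. Fix a countable dense subset $M_0 \subseteq \bigcup_n |\mathcal{A}_n|$ of $|\mathcal{M}|$ and, for each $n$, a countable dense subset $\mathscr{K}_{n,0} \subseteq \mathscr{K}_n$ (which exists by WPP). By that lemma it then suffices to show that for every $\langle \bar a \rangle \in \mathscr{K}_{n,0}$ and every strict approximate $\mathscr{K}$-isomorphism $\varphi$ from $\langle \bar a \rangle$ to $\mathcal{M}$ there is some $\psi \in \operatorname{Apx}_\mathscr{K}^{\leq\varphi}(\langle \bar a \rangle, \mathcal{M})$ with $\psi|_{\bar a \times \mathcal{M}}$ being $\varepsilon$-total. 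I would construct a $\mathscr{K}$-admissible embedding $\iota \colon \langle \bar a \rangle \to \mathcal{M}$ with $\varphi_\iota \leq \varphi$ and take $\psi := \varphi_\iota$; then $\psi$ lies in $\operatorname{Apx}_\mathscr{K}(\langle \bar a \rangle, \mathcal{M})$ by admissibility of $\iota$, and $\psi|_{\bar a \times \mathcal{M}}$ is even $0$-total since each $\iota(a_i)$ belongs to $|\mathcal{M}|$ — so the same $\psi$ works for every $\varepsilon$, and the finite set $F$ and the rationality hypothesis in Lemma~\ref{lem:_checking_dense_part} are irrelevant here.

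First I would extract finite data from the strictness of $\varphi$. Exactly as in the first part of the proof of Proposition~\ref{prop:_joint_embedding_and_approximate_isomorphism}, strictness of $\varphi$ yields some $m$ and finite partial $\mathscr{K}$-isomorphisms $\iota_1 \colon \langle \bar a \rangle \dashrightarrow \mathcal{C}_0$ and $\iota_2 \colon \mathcal{A}_m \dashrightarrow \mathcal{C}_0$ into an object $\mathcal{C}_0$ of $\mathscr{K}$ with $\varphi_{\iota_1,\iota_2}|^{\langle\bar a\rangle\times\mathcal{M}} \vartriangleleft \varphi$. By Proposition~\ref{prop:_characterization_of_strict_domination} there are finite sets $X_0 \subseteq |\langle\bar a\rangle|$, $Y_0 \subseteq |\mathcal{M}|$ and a number $\delta > 0$ with $(\varphi_{\iota_1,\iota_2}|^{\langle\bar a\rangle\times\mathcal{M}}|_{X_0 \times Y_0})|^{\langle\bar a\rangle\times\mathcal{M}} + \delta \leq \varphi$. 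Put $S_1 := \operatorname{dom}\iota_1$ and $S_2 := \operatorname{dom}\iota_2$. Since $\varphi_{\iota_1,\iota_2}|^{\langle\bar a\rangle\times\mathcal{M}}(p,x) = \inf_{s\in S_1,\, t\in S_2}(d(p,s) + d(\iota_1(s),\iota_2(t)) + d(t,x))$ (distances in the appropriate spaces; here one uses that $\mathcal{A}_m$ is a substructure of $\mathcal{M}$), a triangle-inequality computation shows that a $\mathscr{K}$-admissible embedding $\iota \colon \langle \bar a \rangle \to \mathcal{M}$ satisfying the single finite-to-finite estimate $d(\iota(s),t) \leq d(\iota_1(s),\iota_2(t)) + \delta$ for all $s \in S_1$, $t \in S_2$ automatically has $\varphi_\iota \leq (\varphi_{\iota_1,\iota_2}|^{\langle\bar a\rangle\times\mathcal{M}}|_{X_0 \times Y_0})|^{\langle\bar a\rangle\times\mathcal{M}} + \delta \leq \varphi$. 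So it remains to produce such an $\iota$.

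This would be done by amalgamation. The substructures $\langle S_1 \rangle$ and $\langle S_2 \rangle$ are objects of $\mathscr{K}$, the canonical maps $\langle S_1 \rangle \to \langle\bar a\rangle$ and $\langle S_2 \rangle \to \mathcal{A}_m$ are $\mathscr{K}$-embeddings, and $\iota_1$, $\iota_2$ extend to $\mathscr{K}$-embeddings $\langle S_1 \rangle \to \mathcal{C}_0$, $\langle S_2 \rangle \to \mathcal{C}_0$, all by the definition of finite partial $\mathscr{K}$-isomorphisms. Applying NAP twice — first amalgamating $\langle\bar a\rangle$ and $\mathcal{C}_0$ over $\langle S_1 \rangle$, then amalgamating $\mathcal{A}_m$ with the resulting object over $\langle S_2 \rangle$ (mapped in through $\mathcal{C}_0$) — gives $\mathscr{K}$-embeddings $g_0 \colon \langle\bar a\rangle \to \mathcal{F}$ and $h_0 \colon \mathcal{A}_m \to \mathcal{F}$ into an object $\mathcal{F}$ with $d(g_0(s), h_0(t)) < d(\iota_1(s),\iota_2(t)) + \eta$ for $s \in S_1$, $t \in S_2$, where $\eta$ is the sum of the two amalgamation errors and may be taken arbitrarily small. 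By hypothesis~(a), $\mathcal{F}$ admits a $\mathscr{K}$-embedding into $\mathcal{A}_k$ for some $k > m$; composing, we obtain $\mathscr{K}$-embeddings $g_1 \colon \langle\bar a\rangle \to \mathcal{A}_k$ and $h_1 \colon \mathcal{A}_m \to \mathcal{A}_k$. By hypothesis~(b), applied to $h_1$, the finite set $S_2 \subseteq |\mathcal{A}_m|$ and a small $\varepsilon' > 0$, there are $k' > k$ and a $\mathscr{K}$-automorphism $\alpha$ of $\mathcal{A}_{k'}$ with $d(\alpha \circ \iota_{k',k} \circ h_1(t), \iota_{k',m}(t)) < \varepsilon'$ for all $t \in S_2$. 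One then sets $\iota := (\mathcal{A}_{k'} \hookrightarrow \mathcal{M}) \circ \alpha \circ \iota_{k',k} \circ g_1$; it is $\mathscr{K}$-admissible by Proposition~\ref{prop:_compositions_of_approximate_isomorphisms}, being a composite of $\mathscr{K}$-embeddings with the ($\mathscr{K}$-admissible) canonical inclusion $\mathcal{A}_{k'} \hookrightarrow \mathcal{M}$. Finally, for $s \in S_1$, $t \in S_2$, identifying $t$ with $\iota_{k',m}(t) \in |\mathcal{M}|$ and using the isometry $\mathcal{F} \to \mathcal{M}$ just built, one gets $d(\iota(s), t) \leq d(g_0(s), h_0(t)) + \varepsilon' < d(\iota_1(s),\iota_2(t)) + \eta + \varepsilon'$, so choosing $\eta + \varepsilon' \leq \delta$ completes the construction.

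The delicate step is the quantitative reduction in the second paragraph: the cleanest route is not to compare $\varphi_\iota$ with $\varphi$ directly but to relay through the trivial extension of $\varphi_{\iota_1,\iota_2}$, observing both that the whole task collapses to a single estimate on the finite product $S_1 \times S_2$ and that the slack $\delta$ furnished by strictness of $\varphi$ (via Proposition~\ref{prop:_characterization_of_strict_domination}) is available globally, not merely on $X_0 \times Y_0$. With that in hand the two hypotheses do precisely what is expected of them: (a) absorbs the finite amalgam $\mathcal{F}$ into the chain $(\mathcal{A}_n)_n$, and (b) then rotates the absorbed copy of $\mathcal{A}_m$ back onto its canonical position in the chain, which is what makes the constructed embedding $\iota$ genuinely refine the approximate isomorphism $\varphi$ it was extracted from.
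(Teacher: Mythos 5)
Your argument is correct and is essentially the paper's own proof: extract a finite partial $\mathscr{K}$-isomorphism pair from the strictness of $\varphi$, amalgamate via NAP into a single object of $\mathscr{K}$, push that object into the chain using hypothesis (a), realign the displaced copy of $\mathcal{A}_m$ with its canonical position using hypothesis (b), and conclude with the same triangle-inequality estimate showing $\varphi_\iota$ refines $\varphi$. The only cosmetic differences are that you route the conclusion through Lemma~\ref{lem:_checking_dense_part} (with a $0$-total $\psi=\varphi_\iota$, so its finiteness/rationality hypotheses are indeed vacuous here) and re-derive the needed part of Proposition~\ref{prop:_joint_embedding_and_approximate_isomorphism} by two explicit NAP applications, whereas the paper verifies condition (iii) of Theorem~\ref{thm:_uniqueness_of_fraisse_limit} directly for an arbitrary object of $\mathscr{K}$; both reductions are legitimate.
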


\begin{proof}
We shall check (iii) in Theorem~\ref{thm:_uniqueness_of_fraisse_limit}.  
Let $\varepsilon$ be a positive real number, 
$\mathcal{B}$ be an object of $\mathscr{K}$ and $\varphi$ be in 
$\operatorname{Stx}_\mathscr{K}(\mathcal{B}, \mathcal{M})$.  
Then one can find finite subsets $F_1 \subseteq |\mathcal{B}|$ and 
$F_2 \subseteq |\mathcal{A}_i|$, 
an object $\mathcal{C}$ of $\mathscr{K}$, 
and $\mathscr{K}$-embeddings 
$\iota \colon \mathcal{B} \to \mathcal{C}$ and $\eta \colon \mathcal{A}_i \to \mathcal{C}$ 
such that the relation 
\[
	\bigl(\varphi_{\iota,\eta}|_{F_1 \times F_2}\bigr)|^{\mathcal{B} \times \mathcal{M}} 
	\vartriangleleft \varphi
\]
holds.  By assumption~(a), there exists a $\mathscr{K}$-embedding 
$\theta$ of $\mathcal{C}$ into some $\mathcal{A}_j$ with $j > i$.  
Then one can find a $\mathscr{K}$-automorphism $\alpha \in \operatorname{Aut}(\mathcal{A}_k)$ 
for some $k >j$ such that the inequality 
\[
	d\bigl(\alpha \circ \iota_{k,j} \circ \theta \circ \eta(a), \kern 3pt \iota_{k,i}(a)\bigr) 
	< \varepsilon
\]
holds for all $a \in F_2$, by assumption~(b).  
Now, for $b \in F_1$ and $a \in F_2$, we have 
\[
\begin{aligned}
	& d\bigl(\alpha \circ \iota_{k,j} \circ \theta  \circ \iota(b), \iota_{k,i}(a)\bigr) \\
	{}<{}& d\bigl(\alpha \circ \iota_{k,j} \circ \theta  \circ \iota(b), 
	\alpha \circ \iota_{k,j} \circ \theta \circ \eta(a)\bigr) + \varepsilon \\
	{}={}& d\bigl(\iota(b), \eta(a)\bigr) + \varepsilon, 
\end{aligned}
\]
whence 
\[
	\varphi_{\alpha \circ \iota_{k,j} \circ \theta \circ \iota} 
	\leq \bigl(\varphi_{\iota,\eta}|_{F_1 \times F_2}\bigr)|^{\mathcal{B} \times \mathcal{M}} 
	\vartriangleleft \varphi, 
\]
which completes the proof.  
\end{proof}

\begin{cor}\label{cor:_limit_description}
Let $\mathcal{M} = \overline{\bigcup_j \langle \mathcal{A}_{p_j,n_j},\tau_j\rangle}$ be 
a $\mathscr{K}_\nu$-structure and $\iota_{k,j}$ denote 
the canonical $\mathscr{K}$-embedding from $\langle \mathcal{A}_{p_j,n_j}, \tau_j \rangle$ 
into $\langle \mathcal{A}_{p_k,n_k}, \tau_k \rangle$.  
Suppose the following conditions hold: 
\begin{enumerate}[label=\textup{(\alph*)}]
\item
	$p_j \geq 1$.  
\item
	For any $n \in \mathbb{N}_\nu$, there exists $j \in \mathbb{N}$ such that 
	$n$ divides $n_j$.  
\item
	For any $j \in \mathbb{N}$ and $\varepsilon > 0$ 
	there exists $k > j$ such that $\iota_{k,j}$ is of the form 
	\[
		\iota_{k,j}(f) = \operatorname{Ad}(v)\bigl(\operatorname{diag}
		[f \circ t_1, \dots, f \circ t_m]\bigr), 
	\]
	where the diameter of the image of $t_l$ is less than $\varepsilon$ for all $l$.  
\end{enumerate}
Then $\mathcal{M} = \overline{\bigcup_j \langle \mathcal{A}_{p_j,n_j},\tau_j\rangle}$ is 
the Fraïssé limit of $\mathscr{K}_\nu$.  
\end{cor}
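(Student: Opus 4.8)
The plan is to apply Proposition~\ref{prop:_characterization_of_the_limit} to the inductive chain $\langle\mathcal{A}_{p_j,n_j},\tau_j\rangle$: since $\mathscr{K}_\nu$ is a Fraïssé category by Theorem~\ref{thm:_knu_is_a_fraisse_category} and $\mathcal{M}$ is given to be a $\mathscr{K}_\nu$-structure, it suffices to verify conditions~(a) and~(b) of that proposition.

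For~(a), let $\langle\mathcal{A}_{p,n},\tau\rangle$ be any object of $\mathscr{K}_\nu$, so $n\in\mathbb{N}_\nu$. Lemma~\ref{lem:_embeddability}(ii) provides a $\mathscr{K}_\nu$-embedding $\langle\mathcal{A}_{p,n},\tau\rangle\to\langle\mathcal{A}_{1,n},\tau_\lambda\rangle$. By condition~(b) of the corollary, fix $j$ with $n\mid n_j$; then the evident map $f\mapsto\operatorname{diag}[f,\dots,f]$ ($n_j/n$ copies) and the pullback along the projection $[0,1]^{p_j}\to[0,1]$ onto the first coordinate (a $\mathscr{K}_\nu$-embedding because $p_j\geq1$ by condition~(a) and the projection preserves the Lebesgue measure) give $\mathscr{K}_\nu$-embeddings $\langle\mathcal{A}_{1,n},\tau_\lambda\rangle\to\langle\mathcal{A}_{1,n_j},\tau_\lambda\rangle\to\langle\mathcal{A}_{p_j,n_j},\tau_\lambda\rangle$. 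Composing with the $\mathscr{K}_\nu$-isomorphism $\langle\mathcal{A}_{p_j,n_j},\tau_\lambda\rangle\cong\langle\mathcal{A}_{p_j,n_j},\tau_j\rangle$ of Lemma~\ref{lem:_embeddability}(i) yields a $\mathscr{K}_\nu$-embedding into $\langle\mathcal{A}_{p_j,n_j},\tau_j\rangle$, as required.

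For~(b), fix $\varepsilon>0$, a finite $F\subseteq|\langle\mathcal{A}_{p_i,n_i},\tau_i\rangle|$ and a $\mathscr{K}_\nu$-embedding $\eta\colon\langle\mathcal{A}_{p_i,n_i},\tau_i\rangle\to\langle\mathcal{A}_{p_j,n_j},\tau_j\rangle$ with $j>i$. Choose $\delta>0$ with $|t-t'|<\delta\Rightarrow\|f(t)-f(t')\|\leq\varepsilon/2$ for all $f\in F$. As $\mathscr{K}_\nu$-morphisms, both $\eta$ and the canonical embedding $\iota_{j,i}$ are diagonalizable, each with exactly $N:=n_j/n_i$ blocks; write $u_1,\dots,u_N$ and $u_1',\dots,u_N'$ for their building maps. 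Pick $\delta'>0$ so that each $u_q$ and each $u_q'$ maps sets of diameter $<\delta'$ to sets of diameter $<\delta/2$, and then, using condition~(c), choose $k>j$ so that $\iota_{k,j}(g)=\operatorname{Ad}(v)(\operatorname{diag}[g\circ t_1,\dots,g\circ t_m])$ with $\operatorname{diam}(\operatorname{Im}t_l)<\delta'$ for all $l$. Expanding the composites and collecting the conjugating unitaries into continuous unitary-valued functions $V$, $W$, we may write $\iota_{k,j}\circ\eta=\operatorname{Ad}(V)\circ D_1$ and $\iota_{k,i}=\iota_{k,j}\circ\iota_{j,i}=\operatorname{Ad}(W)\circ D_2$, where $D_1,D_2$ are $\mathscr{K}_\nu$-embeddings given by block-diagonal matrices with $mN$ blocks and building maps $u_q\circ t_l$ and $u_q'\circ t_l$ respectively, all of whose images have diameter $<\delta/2$. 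Lemma~\ref{lem:_lemma_for_nap} applied to $D_1$ and $D_2$ produces a permutation of their $mN$ blocks matching each $u_q\circ t_l$ to some $u_{q'}'\circ t_{l'}$ with $\|u_q\circ t_l-u_{q'}'\circ t_{l'}\|<\delta$; letting $P$ be the associated constant block-permutation unitary, the choice of $\delta$ gives $\|D_1(f)-\operatorname{Ad}(P)(D_2(f))\|\leq\varepsilon/2$ for $f\in F$. Then $\alpha:=\operatorname{Ad}(WP^*V^*)$ is an inner (hence $\mathscr{K}_\nu$-admissible) automorphism of $\langle\mathcal{A}_{p_k,n_k},\tau_k\rangle$, and since $\alpha\circ\iota_{k,j}\circ\eta(f)=\operatorname{Ad}(WP^*)(D_1(f))$ while $\iota_{k,i}(f)=\operatorname{Ad}(W)(D_2(f))$, a direct computation gives $\|\alpha\circ\iota_{k,j}\circ\eta(f)-\iota_{k,i}(f)\|=\|D_1(f)-\operatorname{Ad}(P)(D_2(f))\|\leq\varepsilon/2<\varepsilon$ for all $f\in F$. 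This verifies~(b), so Proposition~\ref{prop:_characterization_of_the_limit} shows $\mathcal{M}$ is the Fraïssé limit of $\mathscr{K}_\nu$.

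The main obstacle is the verification of~(b): one has to fix the moduli of continuity of the finitely many building maps of $\eta$ and $\iota_{j,i}$ \emph{before} invoking condition~(c) (so that post-composing with $\iota_{k,j}$ really does make all the relevant building maps uniformly small), observe that the diagonal parts $D_1$ and $D_2$ have the same number of blocks (which is exactly what lets Lemma~\ref{lem:_lemma_for_nap}, i.e.\ Hall's marriage theorem, apply), and check that the residual unitaries together with the block permutation can all be absorbed into a single inner automorphism.
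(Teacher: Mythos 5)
Your proposal is correct and follows exactly the route the paper intends: the paper's proof is a one-line citation of Lemma~\ref{lem:_embeddability}, Lemma~\ref{lem:_lemma_for_nap} and Proposition~\ref{prop:_characterization_of_the_limit}, and your argument is precisely that reduction, with condition~(a) verified via Lemma~\ref{lem:_embeddability} together with hypotheses (a)--(b), and condition~(b) verified via hypothesis~(c) and the Hall's-marriage-theorem matching of Lemma~\ref{lem:_lemma_for_nap}, absorbing the unitaries into an inner automorphism as in the NAP proof of Theorem~\ref{thm:_knu_is_a_fraisse_category}.
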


\begin{proof}
This is immediate from Lemmas \ref{lem:_embeddability} and~\ref{lem:_lemma_for_nap} 
and Proposition~\ref{prop:_characterization_of_the_limit}.  
\end{proof}

Take an increasing sequence $\{n_j\} \subseteq \mathbb{N}_\nu$ so that 
(b) in~\ref{cor:_limit_description} is satisfied.  
Define $\iota_i \colon \mathcal{A}_{1,n_j} \to \mathcal{A}_{1,n_{j+1}}$ as 
the $*$-homomorphism of the form 
\[
	\iota_i(f) = \operatorname{diag}[f \circ r_1, \dots, f \circ r_m], 
\]
where $r_1, \dots, r_m$ are as in the proof of Theorem~\ref{thm:_knu_is_a_fraisse_category}.  
Then the diagram 
\[
\xymatrix{
	\langle \mathcal{A}_{1,n_1}, \tau_\lambda \rangle \ar[r]^{\iota_1} & 
	\langle \mathcal{A}_{1,n_2}, \tau_\lambda \rangle \ar[r]^{\iota_2} & 
	\langle \mathcal{A}_{1,n_3}, \tau_\lambda \rangle \ar[r]^{\iota_3} & \cdots \\
	\langle \mathbb{M}_{n_1}, \operatorname{tr} \rangle \ar[r] \ar[u] & 
	\langle \mathbb{M}_{n_2}, \operatorname{tr} \rangle \ar[r] \ar[u] & 
	\langle \mathbb{M}_{n_3}, \operatorname{tr} \rangle \ar[r] \ar[u] & \cdots
}
\]
commutes, where $\mathbb{M}_{n_j}$ is canonically identified with 
the C*-subalgebra of constant functions on the interval $[0,1]$.  
Since the upper inductive system satisfies the assumption of~\ref{cor:_limit_description} 
and the limit of the lower inductive system is clearly dense in that of the upper one, 
it follows that the Fraïssé limit of $\mathscr{K}_\nu$ is 
isomorphic to the UHF algebra of type $\nu$ as C*-algebras 
(See \cite[Example~III.5.1]{davidson96:_cstar_algebras} for the definition).  

We conclude this section by showing that 
all $L_{\mathrm{TC}^*}$-embeddings into 
the Fraïssé limit of $\mathscr{K}_\nu$ is indeed 
$\mathscr{K}_\nu$-admissible, so that
the gap explained in Remark~\ref{rem:_the_gap} disappears in this case.  
To see this, we use the following 
lemmas~\cite[Exercise~II.8 and Lemma~III.3.2]{davidson96:_cstar_algebras}.  

\begin{lem}\label{lem:_perturbation_of_functional_calculus}
Let $f$ be a continuous function on a compact subset $X$ of $\mathbb{C}$.  
Then, for any $\varepsilon > 0$, there exists $\delta > 0$ such that 
if $a$ and $b$ are normal elements of a C*-algebra $\mathcal{A}$ 
with $\|a-b\| < \delta$, then $\|f(a)-f(b)\| < \varepsilon$.  
\end{lem}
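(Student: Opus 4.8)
The plan is to reduce the statement to the case of $*$-polynomials via the Stone--Weierstrass theorem, where norm-continuity of the functional calculus is elementary. Throughout, it is understood (as is needed for $f(a)$ and $f(b)$ to be defined) that $a$ and $b$ are normal elements whose spectra are contained in $X$; since $a$ and $b$ are normal, $\|a\| = r(a) \le R$ and $\|b\| \le R$, where $R := \max_{z \in X} |z|$, so we have a uniform norm bound depending only on $X$.

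First I would fix $\varepsilon > 0$. The $*$-subalgebra of $C(X)$ generated by the coordinate function $z \mapsto z$ separates points of $X$, contains the constants, and is closed under complex conjugation, so by Stone--Weierstrass it is dense in $C(X)$. Hence there is a polynomial $p(z, \bar z) = \sum_{j,k} c_{j,k}\, z^j \bar z^k$ with $\sup_{z \in X} |f(z) - p(z, \bar z)| < \varepsilon/3$. Applying the (isometric) continuous functional calculus for the normal element $a$ and using $\sigma(a) \subseteq X$, we get $\|f(a) - p(a, a^*)\| = \|(f - p)(a)\| = \sup_{z \in \sigma(a)} |f(z) - p(z,\bar z)| \le \varepsilon/3$, where $p(a, a^*) = \sum_{j,k} c_{j,k}\, a^j (a^*)^k$; the same estimate holds for $b$.

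The key point is that $c \mapsto p(c, c^*)$ is Lipschitz on the ball of radius $R$. For a single monomial, writing $a^j(a^*)^k - b^j(b^*)^k$ as a telescoping sum of terms each differing from its neighbour by one factor $a$ or $a^*$ replaced by $b$ or $b^*$, and bounding the remaining factors by $R$, one obtains $\|a^j(a^*)^k - b^j(b^*)^k\| \le (j+k)\,R^{\,j+k-1}\,\|a - b\|$. Summing over the finitely many monomials of $p$ gives a constant $C = C(p, R)$ with $\|p(a,a^*) - p(b,b^*)\| \le C\,\|a-b\|$. Now choose $\delta := \varepsilon/(3C)$. If $\|a - b\| < \delta$, then by the triangle inequality
\[
	\|f(a) - f(b)\| \le \|f(a) - p(a,a^*)\| + \|p(a,a^*) - p(b,b^*)\| + \|p(b,b^*) - f(b)\| < \tfrac{\varepsilon}{3} + \tfrac{\varepsilon}{3} + \tfrac{\varepsilon}{3} = \varepsilon,
\]
as required.

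There is no serious obstacle here; the only point requiring a little care is that $\delta$ must not depend on the particular elements $a$, $b$ or on the ambient algebra $\mathcal{A}$, and this is guaranteed because normality forces the uniform bound $\|a\|, \|b\| \le R$ in terms of $X$ alone, so the Lipschitz constant $C$ depends only on the chosen polynomial $p$ (hence only on $f$ and $\varepsilon$) and on $X$.
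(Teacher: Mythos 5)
Your argument is correct and complete: the Stone--Weierstrass approximation by a $*$-polynomial $p(z,\bar z)$, the isometric functional calculus estimate $\|f(a)-p(a,a^*)\|\leq\varepsilon/3$ (using $\sigma(a)\subseteq X$), and the telescoping Lipschitz bound for monomials on the ball of radius $R=\max_{z\in X}|z|$ fit together exactly as you say, and you correctly observe that normality forces $\|a\|,\|b\|\leq R$, so $\delta$ depends only on $f$, $\varepsilon$ and $X$, not on $a$, $b$ or $\mathcal{A}$. The paper itself does not prove this lemma but cites it (Exercise~II.8 of Davidson's book), and your proof is precisely the standard argument that fills in that citation, so there is nothing to compare beyond noting that your write-up is a valid self-contained substitute for the reference.
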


\begin{lem}\label{lem:_perturbation_of_finite_dimensional_algebras}
For any $\varepsilon > 0$ and $n \in \mathbb{N}$, 
there exists $\delta > 0$ such that if $\mathcal{A}$ and $\mathcal{B}$ are 
C*-subalgebras of a unital C*-algebra $\mathcal{D}$, 
if $\operatorname{dim} \mathcal{A}$ is less than $n$, 
and if $\{e_{ij}^{(k)}\}$ is a system of matrix units which spans $\mathcal{A}$ 
and satisfies $d(e_{ij}^{(k)}, \mathcal{B}) < \delta$, 
then there exists a unitary $u$ in $\mathcal{D}$ with $\|u-1\| < \varepsilon$ and 
$\operatorname{Ad}(u)[\mathcal{A}] \subseteq \mathcal{B}$.  
\end{lem}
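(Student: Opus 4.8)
The plan is to perturb the approximate copy of $\mathcal{A}$ sitting inside $\mathcal{B}$ into a genuine copy, and then to conjugate $\mathcal{A}$ onto that copy by an inner unitary of $\mathcal{D}$ close to the identity. The quantitative uniformity comes from the hypothesis $\operatorname{dim} \mathcal{A} < n$: it bounds the number of matrix units, hence the number of elementary perturbation steps and the total accumulation of error, so that a single $\delta = \delta(\varepsilon, n)$ works for every admissible $\mathcal{A}$, $\mathcal{B}$ and $\mathcal{D}$.

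First I would choose, for each matrix unit, an element $f_{ij}^{(k)} \in \mathcal{B}$ with $\|e_{ij}^{(k)} - f_{ij}^{(k)}\| < \delta$; these satisfy the matrix-unit relations up to an error of order $\delta$, with constant depending only on $n$. The core step is to polish them into a genuine system of matrix units $\{g_{ij}^{(k)}\} \subseteq \mathcal{B}$ with $\|e_{ij}^{(k)} - g_{ij}^{(k)}\| \leq \omega(\delta)$ for a modulus $\omega$ satisfying $\omega(\delta) \to 0$ as $\delta \to 0$. I would proceed in the usual three stages. First, replace each diagonal $f_{ii}^{(k)}$ by its self-adjoint part, whose spectrum is concentrated near $\{0, 1\}$, and apply a fixed continuous function equal to $0$ near $0$ and $1$ near $1$ to obtain a genuine projection; Lemma~\ref{lem:_perturbation_of_functional_calculus} guarantees that the resulting projection stays close to $e_{ii}^{(k)}$. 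Second, orthogonalize these projections, within each block and then across blocks, by the standard correction for nearly orthogonal projections. Third, construct the off-diagonal partial isometries as the polar parts of the compressed elements $p_i^{(k)} f_{ij}^{(k)} p_j^{(k)}$, invoking Lemma~\ref{lem:_perturbation_of_functional_calculus} again for the square-root function to control the distance to $e_{ij}^{(k)}$. All the resulting $g_{ij}^{(k)}$ lie in $\mathcal{B}$ since each operation is performed inside $\mathcal{B}$.

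With the genuine system $\{g_{ij}^{(k)}\}$ at hand, I would form the intertwiner $a := \sum_{k,i} g_{i1}^{(k)} e_{1i}^{(k)} \in \mathcal{D}$. A direct check gives $a\, e_{pq}^{(l)} = g_{pq}^{(l)}\, a$, so that $a^*a$ commutes with $\mathcal{A}$; and because $g_{i1}^{(k)}$ is within $\omega(\delta)$ of $e_{i1}^{(k)}$, the element $a$ is within $O(\omega(\delta))$ of the unit projection $p := \sum_{k,i} e_{ii}^{(k)}$ of $\mathcal{A}$. Writing $q := \sum_{k,i} g_{ii}^{(k)}$ for the unit of the polished copy, one has $a = qap$ with $a^*a \approx p$ and $aa^* \approx q$, so the polar part $u_0 := a(a^*a)^{-1/2}$ is a partial isometry from $p$ to $q$, close to $p$, with $u_0\, e_{pq}^{(l)}\, u_0^* = g_{pq}^{(l)}$. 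Finally I would promote $u_0$ to a unitary: since $p$ and $q$ are close projections in $\mathcal{D}$, the standard fact furnishes a unitary $z$ with $\|z - 1\|$ small and $zpz^* = q$, $z(1-p)z^* = 1-q$; then $u := u_0 + z(1-p)$ is a unitary of $\mathcal{D}$ with $\|u - 1\| < \varepsilon$ once $\delta$ is small enough, and one checks $\operatorname{Ad}(u)[e_{pq}^{(l)}] = g_{pq}^{(l)} \in \mathcal{B}$, whence $\operatorname{Ad}(u)[\mathcal{A}] \subseteq \mathcal{B}$.

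The main obstacle is the polishing step, specifically the orthogonalization of the near-projections and the extraction of the off-diagonal partial isometries: the corrections must be organized so that the errors do not compound uncontrollably as one sweeps through the $O(n)$ matrix units, and so that the final modulus $\omega(\delta)$ depends on $n$ alone. Getting a clean, block-number-independent bound here is exactly what licenses the single choice $\delta = \delta(\varepsilon, n)$ demanded by the statement; the remaining polar-decomposition and unitary-extension arguments are routine by comparison.
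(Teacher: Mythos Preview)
The paper does not actually prove this lemma: it is quoted verbatim from Davidson's textbook, together with Lemma~\ref{lem:_perturbation_of_functional_calculus}, via the citation ``\cite[Exercise~II.8 and Lemma~III.3.2]{davidson96:_cstar_algebras}'' and is used as a black box in the proof of Theorem~\ref{thm:_every_embedding_is_k-admissible}. Your sketch is essentially the standard argument behind that reference (perturb to a genuine system of matrix units inside $\mathcal{B}$ via functional calculus and orthogonalization, build the intertwiner $a=\sum_{k,i} g_{i1}^{(k)} e_{1i}^{(k)}$, take its polar part, and extend to a unitary of $\mathcal{D}$), so there is nothing to compare on the level of strategy.

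Two small remarks on the write-up. First, your intertwining identity is fine and indeed yields $a^{*}a\in\mathcal{A}'$ (from $a e_{pq}^{(l)}=g_{pq}^{(l)}a$ and its adjoint one gets $e_{pq}^{(l)}a^{*}a=a^{*}g_{pq}^{(l)}a=a^{*}a\,e_{pq}^{(l)}$), but it is worth saying explicitly that $(a^{*}a)^{-1/2}$ is taken in $p\mathcal{D}p$, since $p$ need not equal $1_{\mathcal{D}}$. Second, in the unitary extension, the verification that $u=u_{0}+z(1-p)$ is unitary uses $qz(1-p)=zp(1-p)=0$, which follows from $zpz^{*}=q$; you might want to record that one line. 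None of this affects correctness; the only place where genuine care is needed is, as you note, keeping the cumulative modulus $\omega(\delta)$ dependent on $n$ alone during the orthogonalization sweep, and that is exactly the content of the cited lemma.
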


\begin{lem}\label{lem:_almost_commuting_element_is_close_to_commuting_element}
Let $\{e_{ij}\}$ be the system of standard matrix units of $\mathbb{M}_n$ and 
$a$ be an element of $\mathbb{M}_m \otimes \mathbb{M}_n$ satisfying 
$\|a(1 \otimes e_{ij}) - (1 \otimes e_{ij})a\| < \varepsilon$.  
Then the inequality $\|a - \bigl(1 \otimes \operatorname{tr}\bigr)(a)\| < n^2\varepsilon$ holds.  
\end{lem}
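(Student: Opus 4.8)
The plan is to realize the slice map $1 \otimes \operatorname{tr}$ as an average of the ``inner'' maps $x \mapsto (1 \otimes e_{ij})\,x\,(1 \otimes e_{ji})$, and then to trade each of these for the harmless map $x \mapsto (1 \otimes e_{ij})(1 \otimes e_{ji})\,x$ at the cost of exactly one commutator from the hypothesis. Since there are $n^{2}$ matrix units, each commutator costs at most $\varepsilon$, and the averaging factor $1/n$ is eaten up, this produces a bound of the shape (constant)$\,\cdot\,\varepsilon$, and the constant one gets this way is comfortably below $n^{2}$.

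In detail, I would first record the two identities
\[
	a = \frac{1}{n}\sum_{i,j}(1 \otimes e_{ij})(1 \otimes e_{ji})\,a,
	\qquad
	\bigl(1 \otimes \operatorname{tr}\bigr)(a) = \frac{1}{n}\sum_{i,j}(1 \otimes e_{ij})\,a\,(1 \otimes e_{ji}),
\]
which hold for every $a \in \mathbb{M}_m \otimes \mathbb{M}_n$: the first because $\sum_{i,j}e_{ij}e_{ji} = \sum_{i,j}e_{ii} = n\cdot 1$, and the second because the right-hand side is the well-known description of the trace-preserving conditional expectation $\mathbb{M}_m \otimes \mathbb{M}_n \to \mathbb{M}_m \otimes 1$ (check it on elementary tensors $y \otimes z$ using $e_{ij}z e_{ji} = z_{jj}\,e_{ii}$). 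Subtracting,
\[
	a - \bigl(1 \otimes \operatorname{tr}\bigr)(a)
	= \frac{1}{n}\sum_{i,j}(1 \otimes e_{ij})\bigl[(1 \otimes e_{ji})\,a - a\,(1 \otimes e_{ji})\bigr],
\]
and since each $1 \otimes e_{ij}$ has norm $1$ while the hypothesis bounds each of the $n^{2}$ commutators on the right by $\varepsilon$ (the norm of a commutator being insensitive to sign, and $e_{ji}$ ranging over all matrix units as $i,j$ do), one concludes
\[
	\bigl\|a - \bigl(1 \otimes \operatorname{tr}\bigr)(a)\bigr\|
	\le \frac{1}{n}\sum_{i,j}\bigl\|(1 \otimes e_{ji})\,a - a\,(1 \otimes e_{ji})\bigr\|
	< \frac{1}{n}\cdot n^{2}\cdot\varepsilon = n\varepsilon \le n^{2}\varepsilon.
\]

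There is no real obstacle here; the only point one must get right is the averaging formula for $1 \otimes \operatorname{tr}$ and the bookkeeping that rewrites $a - (1\otimes\operatorname{tr})(a)$ as a single sum of commutators. If one instead prefers to argue by hand with block matrices $a = \sum_{i,j} a_{ij}\otimes e_{ij}$, one can use the commutators with $1 \otimes e_{kk}$ to see that the off-diagonal blocks $a_{ij}$ ($i\ne j$) are small and the commutators with the off-diagonal $1 \otimes e_{kl}$ to see that the diagonal blocks $a_{kk}$ are pairwise close, and then assemble the estimate; this route is messier and is presumably what makes the stated constant $n^{2}$ (rather than the sharper $n$) appear.
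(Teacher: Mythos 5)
Your argument is correct, but it runs along a genuinely different line from the paper's. You realize the slice map as the average $\bigl(1\otimes\operatorname{tr}\bigr)(a)=\frac{1}{n}\sum_{i,j}(1\otimes e_{ij})\,a\,(1\otimes e_{ji})$ and pair it with $a=\frac{1}{n}\sum_{i,j}(1\otimes e_{ij})(1\otimes e_{ji})\,a$, so that the difference becomes $\frac{1}{n}\sum_{i,j}(1\otimes e_{ij})\bigl[(1\otimes e_{ji})a-a(1\otimes e_{ji})\bigr]$, a sum of $n^{2}$ commutators each of norm $<\varepsilon$ multiplied by norm-one elements; after the $1/n$ averaging this gives the sharper bound $n\varepsilon$, which of course implies the stated $n^{2}\varepsilon$. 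The paper instead takes exactly the blockwise route you sketch at the end: writing $a=\sum a_{ij}\otimes e_{ij}$, it uses the commutators with the diagonal and off-diagonal matrix units to show that each of the $n^{2}$ blocks $a_{ij}\otimes e_{ij}$ is within $\varepsilon$ of its counterpart in the expectation (off-diagonal blocks small, diagonal blocks within $\varepsilon$ of the average $\frac{1}{n}\sum_k a_{kk}$), and the triangle inequality over $n^{2}$ terms is precisely what produces the constant $n^{2}$. Your version buys a cleaner, coordinate-free estimate with a better constant; the paper's buys a completely elementary entrywise verification whose bookkeeping matches the constant it states. Both are valid proofs of the lemma as stated.
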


\begin{proof}
If $a$ is represented as $\sum a_{ij} \otimes e_{ij}$, 
then one can easily verify the inequality 
\[
	\|a_{ij} \otimes e_{ij} - \delta_{ij} \sum_k a_{kk} \otimes e_{kk}/n\| < \varepsilon, 
\]
from which the conclusion follows.  
\end{proof}

\begin{thm}\label{thm:_every_embedding_is_k-admissible}
Every $L$-embedding from an object of $\mathscr{K}_\nu$ into 
the Fraïssé limit of $\mathscr{K}_\nu$ is $\mathscr{K}_\nu$-admissible.  
\end{thm}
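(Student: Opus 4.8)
The plan is to reduce the admissibility of an arbitrary $L$-embedding to the closedness of $\operatorname{Apx}_{\mathscr{K}_\nu}(\mathcal{A},\mathcal{M})$, by approximating the embedding pointwise by genuinely $\mathscr{K}_\nu$-admissible embeddings. Fix the Fraïssé limit $\mathcal{M}=\overline{\bigcup_j\langle\mathcal{A}_{1,n_j},\tau_\lambda\rangle}$ with the connecting $\mathscr{K}_\nu$-embeddings constructed after Corollary~\ref{cor:_limit_description}, recall that $\bigcup_j\mathbb{M}_{n_j}$ is dense in $\mathcal{M}$ (with $\mathbb{M}_{n_j}$ the constant functions in the $j$-th term), and write $\jmath_j\colon\langle\mathcal{A}_{1,n_j},\tau_\lambda\rangle\to\mathcal{M}$ for the canonical $\mathscr{K}_\nu$-admissible embedding. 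Since $\iota\mapsto\varphi_\iota$ is continuous for pointwise convergence, $\operatorname{Apx}_{\mathscr{K}_\nu}(\mathcal{A},\mathcal{M})$ is closed, and compositions of $\mathscr{K}_\nu$-admissible maps are $\mathscr{K}_\nu$-admissible (Proposition~\ref{prop:_compositions_of_approximate_isomorphisms}), it suffices to prove: for every $L$-embedding $\iota\colon\langle\mathcal{A}_{p,n},\tau\rangle\to\mathcal{M}$, every finite $F\subseteq|\mathcal{A}_{p,n}|$ and every $\varepsilon>0$, there is a $\mathscr{K}_\nu$-admissible embedding $\sigma\colon\langle\mathcal{A}_{p,n},\tau\rangle\to\mathcal{M}$ with $\|\iota(a)-\sigma(a)\|<\varepsilon$ for $a\in F$. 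By Lemma~\ref{lem:_embeddability}(i) I would assume $\tau=\tau_\lambda$, and take $F$ to consist of the coordinate functions $z_k\otimes1$ together with a system of matrix units of $\mathbb{M}_n$, which generate $\mathcal{A}_{p,n}$. I would also record that $\operatorname{Ad}(w)$ and $\operatorname{Ad}(w^*)$ are $\mathscr{K}_\nu$-admissible automorphisms of $\mathcal{M}$ for every unitary $w\in\mathcal{M}$: $w$ is a point-norm limit of $\jmath_j(v_j)$ with $v_j$ a unitary of $\mathbb{M}_{n_j}$, each $\operatorname{Ad}(v_j)$ is a diagonalizable ($k=1$) automorphism of $\langle\mathcal{A}_{1,n_j},\tau_\lambda\rangle$, hence a $\mathscr{K}_\nu$-isomorphism, and these extend compatibly along the inductive system to a $\mathscr{K}_\nu$-admissible automorphism $\operatorname{Ad}(\jmath_j(v_j))$ of $\mathcal{M}$; closedness of $\operatorname{Apx}_{\mathscr{K}_\nu}(\mathcal{M},\mathcal{M})$ then gives the assertion.

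Next I would \emph{standardise the matrix part}. For $j$ large the matrix units $\iota(1\otimes e_{kl})$ lie within a prescribed small distance of $\mathbb{M}_{n_j}\subseteq\mathcal{M}$, so Lemma~\ref{lem:_perturbation_of_finite_dimensional_algebras} provides a unitary $w_1\in\mathcal{M}$, close to $1$, with $\operatorname{Ad}(w_1)\bigl(\iota(\mathbb{M}_n)\bigr)\subseteq\mathbb{M}_{n_j}$; since any two unital copies of $\mathbb{M}_n$ in $\mathbb{M}_{n_j}$ are unitarily conjugate, a further conjugation by a unitary $w_2\in\mathbb{M}_{n_j}$ puts $\iota(\mathbb{M}_n)$ into the standard position $\mathbb{M}_n\otimes1$ with respect to a tensor decomposition $\mathcal{M}=\mathbb{M}_n\otimes\mathcal{N}$ in which $\mathcal{N}$ is again a UHF algebra (of type $\nu/n$). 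As the relative commutant of $\mathbb{M}_n\otimes1$ is $1\otimes\mathcal{N}$, the perturbed embedding then sends $f\otimes a$ to $a\otimes\iota'(f)$ for a unital, trace-preserving $L$-embedding $\iota'\colon C([0,1]^p)\to\mathcal{N}$, and the elements $x_k:=\iota'(z_k)$ are commuting self-adjoints with spectrum $[0,1]$ and Lebesgue spectral distribution (the latter because $\iota$ is trace-preserving and $\mathcal{M}$ has a unique trace).

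Then I would \emph{replace $\iota'$ by a finite-dimensional map}. For a small $\delta>0$, the (finitely many) spectral projections of the $x_k$ over the intervals $[l\delta,(l+1)\delta)$ commute with one another and generate a finite-dimensional abelian subalgebra of $\mathcal{N}$; Lemma~\ref{lem:_perturbation_of_finite_dimensional_algebras} again yields a unitary close to $1$ carrying that subalgebra into a matrix subalgebra $\mathbb{F}\subseteq\mathcal{N}$, and rounding the images of the $x_k$ on these projections, with Lemma~\ref{lem:_perturbation_of_functional_calculus} to control the resulting perturbation on all of $F$, produces a $*$-homomorphism $\psi\colon C([0,1]^p)\to\mathbb{F}$ factoring through a finite-dimensional abelian algebra, point-norm close on $F$ to (a matrix amplification of) the perturbed $\iota$, and whose spectral distribution is as close to Lebesgue measure as desired; Lemma~\ref{lem:_almost_commuting_element_is_close_to_commuting_element} is available here to keep the relevant commutators under control. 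Thus, writing $w$ for the product of all the unitaries used so far, $\operatorname{Ad}(w)\circ\iota$ is, on $F$, within $\varepsilon/2$ of the $*$-homomorphism $\Psi\colon f\otimes a\mapsto a\otimes\psi(f)$.

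Finally I would \emph{fatten} $\psi$. The map $\psi$ is a sum of evaluations at finitely many points of $[0,1]^p$ with integer multiplicities nearly proportional to Lebesgue measure on a fine grid. Replacing each point by a tiny space-filling curve into a small cube about it --- just as in the proof of Lemma~\ref{lem:_embeddability}(ii) --- and adjusting the multiplicities so that the curves jointly surject onto $[0,1]^p$ and the averaged push-forward of Lebesgue measure on $[0,1]$ equals $\tau_\lambda$ exactly, one builds a diagonalizable, injective, unital, trace-preserving $*$-homomorphism $\tilde\kappa\colon\langle\mathcal{A}_{p,n},\tau_\lambda\rangle\to\langle\mathcal{A}_{1,n_{j''}},\tau_\lambda\rangle$, for a suitable term of the fixed inductive system (so $n\mid n_{j''}$ and the number of diagonal entries is $n_{j''}/n$), which approximates $\psi$ on $F$. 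Then $\tilde\kappa$ is a $\mathscr{K}_\nu$-embedding, $\jmath_{j''}\circ\tilde\kappa$ is $\mathscr{K}_\nu$-admissible, and $\sigma:=\operatorname{Ad}(w^*)\circ\jmath_{j''}\circ\tilde\kappa$ is a $\mathscr{K}_\nu$-admissible embedding with $\|\iota(a)-\sigma(a)\|<\varepsilon$ for all $a\in F$; letting $F$ exhaust $|\mathcal{A}_{p,n}|$ and $\varepsilon\to0$ completes the argument. The step I expect to be the main obstacle is this last one: arranging the fattening so that $\tilde\kappa$ is \emph{simultaneously} injective, exactly trace-preserving, of the required diagonalizable form, and valued in one of the prescribed terms $\langle\mathcal{A}_{1,n_{j''}},\tau_\lambda\rangle$ --- while still matching the point-evaluation data of $\psi$ closely on $F$ --- forces a somewhat delicate bookkeeping of cube subdivisions, multiplicities and divisibilities (using that the ratios $n_{j''}/n$ grow and that $\psi$ is already nearly Lebesgue-distributed); by contrast, the earlier perturbation steps are routine applications of Lemmas~\ref{lem:_perturbation_of_functional_calculus}, \ref{lem:_perturbation_of_finite_dimensional_algebras} and~\ref{lem:_almost_commuting_element_is_close_to_commuting_element}.
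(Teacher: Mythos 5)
Your overall reduction is the same as the paper's: approximate the given $L$-embedding pointwise on a finite generating set by $\mathscr{K}_\nu$-admissible embeddings and use closedness of $\operatorname{Apx}_{\mathscr{K}_\nu}$, and your ``standardise the matrix part'' step is exactly the paper's use of Lemma~\ref{lem:_perturbation_of_finite_dimensional_algebras}. But the middle of your argument has a genuine flaw: the spectral projections $\chi_{[l\delta,(l+1)\delta)}(x_k)$ do \emph{not} lie in $\mathcal{N}$. Since the $x_k$ have Lebesgue spectral distribution, their spectra have no gaps, so these characteristic functions are not obtained by continuous functional calculus and the ``finite-dimensional abelian subalgebra of $\mathcal{N}$'' you want to feed into Lemma~\ref{lem:_perturbation_of_finite_dimensional_algebras} does not exist as described. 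The repair is to approximate the relevant element(s) first inside the dense union $\bigcup_k\mathbb{M}_{n_k}$ and only then take spectral data, which is what the paper does; but note that for $p\geq 2$ this repair runs into the problem of approximating $p$ \emph{exactly commuting} self-adjoints by \emph{exactly commuting} matrices in a single $\mathbb{M}_{n_k}$ — for $p=2$ this is Lin's theorem on almost commuting Hermitian matrices, and for $p\geq3$ it needs still deeper results; none of this follows from Lemmas~\ref{lem:_perturbation_of_functional_calculus}--\ref{lem:_almost_commuting_element_is_close_to_commuting_element}. The paper avoids the issue entirely by proving only the case $p=1$, $\tau=\tau_\lambda$ (one self-adjoint $a$, perturbed into $\bigcup_k\mathbb{M}_{n_k}$ and made to commute with the copy of $\mathbb{M}_n$ via Lemma~\ref{lem:_almost_commuting_element_is_close_to_commuting_element}), so your general-$p$ ambition outruns the available tools.

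The second gap is the step you yourself flag as the main obstacle, and it is not merely bookkeeping: you must convert a finite-dimensional map whose weights are only \emph{approximately} Lebesgue-distributed into an \emph{exactly} trace-preserving, injective, diagonalizable embedding into a prescribed $\langle\mathcal{A}_{1,n_{j''}},\tau_\lambda\rangle$, while moving the generators by only $O(\varepsilon)$. The paper supplies a concrete mechanism for this: it tests the element $a'$ against the hat functions $f_{c,d}$ (controlled by Lemma~\ref{lem:_perturbation_of_functional_calculus} and trace-preservation of $\iota$), deduces via Hall's marriage theorem that the ordered eigenvalues $t_1\leq\dots\leq t_m$ of $a'$ satisfy $t_{mc/N+1},\dots,t_{m(c+1)/N}\in[(c-1)/N,(c+2)/N]$, and then replaces $a'$ by the standard element $a''=\operatorname{diag}[r_1,\dots,r_m]\otimes1$ at cost $3/N$; exact trace preservation and injectivity are then automatic because $\eta(1\otimes e_{ij})=e_{ij}'$, $\eta(\operatorname{id}\otimes1)=a''$ is the standard diagonal embedding up to a unitary. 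Your ``fattening by small space-filling curves with adjusted multiplicities'' asserts the conclusion of such a rearrangement estimate without providing it, so as written the proposal does not constitute a proof; filling it in for $p=1$ would essentially reproduce the paper's marriage-theorem argument, and for $p\geq2$ you would additionally need the commuting-approximation input mentioned above.
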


\begin{proof}
Let $\mathcal{M}$ be the Fraïssé limit of $\mathscr{K}_\nu$ and 
$\iota \colon \langle \mathcal{A}_{p,n}, \tau \rangle \to \mathcal{M}$ be an $L$-embedding.  
Our goal is to show that $\iota$ can be approximated by $\mathscr{K}$-embeddings 
with respect to the topology of pointwise convergence.  
For simplicity, we only show the case $p = 1$ and $\tau = \tau_\lambda$.  
Set 
\[
	G := \{ 1 \otimes e_{ij} \mid i,j = 1, \dots n \} \cup \{ \operatorname{id}_{[0,1]} \otimes 1 \} 
	\subseteq C[0,1] \otimes \mathbb{M}_n \simeq \mathcal{A}_{1,n}, 
\]
where $\{e_{ij}\}$ is the system of standard matrix units of $\mathbb{M}_n$, 
and note that $G$ is a generator of $\mathcal{A}_{1,n}$.  
Given $\varepsilon > 0$, it suffices to find a $\mathscr{K}$-embedding $\eta$ of
$\langle \mathcal{A}_{1,n}, \tau_\lambda \rangle$ into $\mathcal{M}$ satisfying 
$\|\iota(g) - \eta(g)\| < \varepsilon $ for all $g \in G$.  
For this, take $N \in \mathbb{N}$ with $1/N < \varepsilon/6$ and $nN \in \mathbb{N}_\nu$.  
For $c,d \in \mathbb{N}$ with $0 \leq c < d \leq N$, define 
a continuous function $f_{c,d}$ on $[0,1]$ by 
\[
	f_{c, d}(t) := \left\{
	\begin{array}{ll}
		0 & (t \notin [(c-1)/N, (d+1)/N]) \\
		1 & (t \in [c/N, d/N]) \\
		Nt-c+1 & (t \in [(c-1)/N, c/N]) \\
		-Nt+d-1 & (t \in [d/N, (d+1)/N]).  
	\end{array}
	\right.
\] 
Then by Lemma~\ref{lem:_perturbation_of_functional_calculus}, 
there exists positive $\delta < \varepsilon/2$ such that 
if $a$ is a normal element of $\mathcal{M}$ with 
$\|a - \iota(\operatorname{id}_{[0,1]} \otimes 1 )\| < \delta$, 
then the inequality 
$\|f_{c,d}(a) - \iota(f_{c,d} \otimes 1)\| < 1/N$ holds 
for all $c, d \in \mathbb{N}$ with $0 \leq c < d \leq N$.  
Take such $\delta$ and set $\delta' := \delta/(6n^2+1)$.  

Let 
\[
\xymatrix{
	\langle \mathcal{A}_{1,n_1}, \tau_\lambda \rangle \ar[r]^{\iota_1} & 
	\langle \mathcal{A}_{1,n_2}, \tau_\lambda \rangle \ar[r]^{\iota_2} & 
	\langle \mathcal{A}_{1,n_3}, \tau_\lambda \rangle \ar[r]^{\iota_3} & \cdots \\
	\langle \mathbb{M}_{n_1}, \operatorname{tr} \rangle \ar[r] \ar[u] & 
	\langle \mathbb{M}_{n_2}, \operatorname{tr} \rangle \ar[r] \ar[u] & 
	\langle \mathbb{M}_{n_3}, \operatorname{tr} \rangle \ar[r] \ar[u] & \cdots
}
\]
be the inductive system we saw before Lemma~\ref{lem:_perturbation_of_functional_calculus}.  
Then, by Lemma~\ref{lem:_perturbation_of_finite_dimensional_algebras}, 
there exists a unitary $u$ in $\mathcal{M}$ with $\|u - 1\| < \delta'$ and  
$e_{ij}' := u\bigl[\iota(1 \otimes e_{ij})\bigr]u^* \in \bigcup_k \mathbb{M}_{n_k}$.  
We shall denote by $\mathcal{B}$ the finite dimensional simple C*-subalgebra  
generated by $\{e_{ij}'\}$.  
Note that the inequality 
$\|\iota(1 \otimes e_{ij}) - e_{ij}'\| < 2\delta' \leq \varepsilon$ holds for all $i, j$.  
Also, if $\mathcal{B}$ is included in $\mathbb{M}_{n_k}$, 
then $\mathbb{M}_{n_k}$ is canonically isomorphic to $\mathcal{B} \otimes \mathbb{M}_{n_k/n}$.  
Now, take $a \in \bigcup_k \mathbb{M}_{n_k}$ with 
$\|a - \iota(\operatorname{id}_{[0,1]} \otimes 1)\| < \delta'$.  
By Lemma~\ref{lem:_perturbation_of_functional_calculus}, 
we may assume without loss of generality that $a$ is a positive element with $\|a\| \leq 1$.  
Then $\|ae_{ij}' - e_{ij}'a\| < 6\delta$, 
so by Lemma~\ref{lem:_almost_commuting_element_is_close_to_commuting_element}, 
there exists a positive element $a' \in \bigcup_k \mathbb{M}_{n_k}$ 
which commutes with every element of $\mathcal{B}$ and satisfies the inequalities 
$\|a' - \iota(\operatorname{id}_{[0,1]} \otimes 1)\| < (6n^2+1)\delta' \leq \delta$ and 
$\|a'\| \leq 1$.  
By definition of $\delta$, we have $\|f_{c,d}(a') - \iota(f_{c,d} \otimes 1)\| < 1/N$ 
for $0 \leq c < d \leq N$.   

Let $k_0$ be sufficiently large so that 
both $\mathcal{B}$ and $a'$ is included in $\mathbb{M}_{n_{k_0}}$ 
and $m := n_{k_0}/n$ is a multiple of $N$.  
Since the commutant $\mathcal{B}' \cap \mathbb{M}_{n_{k_0}}$ is canonically isomorphic to 
$\mathbb{M}_m$, the positive element $a'$ can be identified with 
a diagonal matrix of $\mathbb{M}_m$, say $\operatorname{diag}[t_1, \dots, t_m]$.  
Without loss of generality, we may assume $t_1 \leq \dots \leq t_m$.  
Then we have 
\[
\begin{aligned}
	\operatorname{tr}\bigl(\operatorname{diag}\bigl[f_{c,d}(t_1), \dots, f_{c,d}(t_m)\bigr]\bigr) 
	&= \operatorname{tr}^\mathcal{M}\bigl(f_{c,d}(a')\bigr) \\
	&\geq \tau_\lambda(f_{c,d} \otimes 1) - 1/N = (d-c)/N.  
\end{aligned}
\]
This inequality together with Hall's marriage theorem implies that the real numbers 
$t_{mc/N+1}, \dots, t_{m(c+1)/N}$ are included in $[(c-1)/N, (c+2)/N]$.  
Consequently, the element 
\[
	a'' := \operatorname{diag}[r_1, \dots, r_m] \otimes 1 \in 
	C([0,1], \mathbb{M}_m) \otimes \mathcal{B} \simeq \mathcal{A}_{1,n_{k_0}}
\]
satisfies $\|a'' - a'\| \leq 3/N < \varepsilon/2$, 
where $r_1, \dots, r_m$ are as in the proof of Theorem~\ref{thm:_knu_is_a_fraisse_category}.  
One can easily check that the $\mathscr{K}$-embedding 
$\eta \colon \mathcal{A}_{1,n} \to \mathcal{A}_{1,n_{k_0}}$ defined by
\[
\begin{aligned}
	\eta(1 \otimes e_{ij}) &:= e_{ij}', & \eta(\operatorname{id}_{[0,1]} \otimes 1) &= a''
\end{aligned}
\]
has the desired property.  
\end{proof}

\noindent
\textbf{Acknowledgement.} The author would like to thank Alessandro Vignati and Bradd Hart 
for helpful conversations.  This work was supported by Research Fellow of the JSPS 
(no.~26--2990) and the Program for Leading Graduate Schools, MEXT, Japan.

\end{document}